\definecolor{verydarkblue}{rgb}{0,0,0.5}
\theoremstyle{plain}
\crefname{introtheorem}{Theorem}{Theorems}
\newtheorem{theorem}{Theorem}[section]
\newtheorem{proposition}[theorem]{Proposition}
\newtheorem{lemma}[theorem]{Lemma}
\newtheorem{definition-theorem}[theorem]{Definition-Theorem}
\theoremstyle{definition}
\newtheorem{definition}[theorem]{Definition}
\newtheorem{case}{Case}
\theoremstyle{remark}
\newtheorem{remark}[theorem]{Remark}
\newtheorem{example}[theorem]{Example}
\numberwithin{figure}{section}
\numberwithin{equation}{section}
\def\Z{{\mathbb Z}}
\def\Q{{\mathbb Q}}
\def\R{{\mathbb R}}
\def\S{{\mathbb S}}
\def\L{{\mathbb L}}
\def\S{{\mathbb S}}
\def\A{{\mathbb A}}
\def\P{{\mathbb P}}
\def\cB{\mathcal{B}}
\def\cC{\mathcal{C}}
\def\cD{\mathcal{D}}
\def\cF{\mathcal{F}}
\def\cM{\mathcal{M}}
\def\cM{\mathcal{M}}
\def\cX{\mathcal{X}}
\def\cZ{\mathcal{Z}}
\def\rI{\mathrm{I}}
\def\I{\mathcal{I}}
\def\O{\mathcal{O}}
\def\fa{\mathfrak{a}}
\def\fn{\mathfrak{n}}
\def\bL{{\bf L}}
\def\a{\alpha}
\def\b{\beta}
\def\g{\gamma}
\def\f{\phi}
\def\ff{\psi}
\def\n{\nu}
\def\m{\mu}
\def\om{\omega}
\def\p{\pi}
\def\r{\rho}
\def\s{\sigma}
\def\t{\tau}
\def\x{\xi}
\def\D{\Delta}
\def\G{\Gamma}
\def\S{\Sigma}
\def\Om{\Omega}
\def\.{\cdot}
\def\^{\widehat}
\def\~{\widetilde}
\def\o{\circ}
\def\ov{\overline}
\def\rat{\dashrightarrow}
\def\surj{\twoheadrightarrow}
\def\inj{\hookrightarrow}
\def\({\left(}
\def\){\right)}
\def\limdir{\underrightarrow{\lim}}
\def\bla{\hskip5pt}
\newcommand*{\da@rightarrow}{\mathchar"0\hexnumber@\symAMSa 4B }
\newcommand*{\da@leftarrow}{\mathchar"0\hexnumber@\symAMSa 4C }
\newcommand*{\xdashrightarrow}[2][]{%
  \mathrel{%
    \mathpalette{\da@xarrow{#1}{#2}{}\da@rightarrow{\,}{}}{}%
  }%
}
\newcommand{\xdashleftarrow}[2][]{%
  \mathrel{%
    \mathpalette{\da@xarrow{#1}{#2}\da@leftarrow{}{}{\,}}{}%
  }%
}
\newcommand*{\da@xarrow}[7]{%
  % #1: below
  % #2: above
  % #3: arrow left
  % #4: arrow right
  % #5: space left 
  % #6: space right
  % #7: math style 
  \sbox0{$\ifx#7\scriptstyle\scriptscriptstyle\else\scriptstyle\fi#5#1#6\m@th$}%
  \sbox2{$\ifx#7\scriptstyle\scriptscriptstyle\else\scriptstyle\fi#5#2#6\m@th$}%
  \sbox4{$#7\dabar@\m@th$}%
  \dimen@=\wd0 %
  \ifdim\wd2 >\dimen@
    \dimen@=\wd2 %   
  \fi
  \count@=2 %
  \def\da@bars{\dabar@\dabar@}%
  \@whiledim\count@\wd4<\dimen@\do{%
    \advance\count@\@ne
    \expandafter\def\expandafter\da@bars\expandafter{%
      \da@bars
      \dabar@ 
    }%
  }%  
  \mathrel{#3}%
  \mathrel{%   
    \mathop{\da@bars}\limits
    \ifx\\#1\\%
    \else
      _{\copy0}%
    \fi
    \ifx\\#2\\%
    \else
      ^{\copy2}%
    \fi
  }%   
  \mathrel{#4}%
}
\renewcommand{\and}{ \ \ \text{ and } \ \ }
\def\red{\mathrm{red}}
\def\an{\mathrm{an}}
\def\bir{\mathrm{bir}}
\def\div{\mathrm{div}}
\def\qm{\mathrm{qm}}
\def\Jac{\mathrm{Jac}}
\def\PR{\mathrm{PR}}
\DeclareMathOperator{\codim} {codim}
\DeclareMathOperator{\Spec} {Spec}
\DeclareMathOperator{\ord} {ord}
\DeclareMathOperator{\Supp} {Supp}
\DeclareMathOperator{\dirlim} {\varinjlim}
\DeclareMathOperator{\invlim} {\varprojlim}
\DeclareMathOperator{\val} {val}
\DeclareMathOperator{\lct} {lct}
\DeclareMathOperator{\id} {id}
\DeclareMathOperator{\Fitt} {Fitt}
\DeclareMathOperator{\Func} {\cF}
\DeclareMathOperator{\Var}{Var}
\DeclareMathOperator{\wt}{wt}
\DeclareMathOperator{\Sk} {Sk}
\DeclareMathOperator\FF{F}
\DeclareMathOperator\IF{IF}
\DeclareMathOperator\MM{M\R}
\DeclareMathOperator\MF{MF}
\DeclareMathOperator\IMF{IMF}
\DeclareMathOperator\GG{G}
\DeclareMathOperator\IG{IG}
\DeclareMathOperator\MG{MG}
\DeclareMathOperator\IMG{IMG}
\DeclareMathOperator\PP{P}
\DeclareMathOperator\IP{IP}
\DeclareMathOperator\MP{MP}
\DeclareMathOperator\MQ{MQ}
\DeclareMathOperator\IMP{IMP}
\DeclareMathOperator\IMQ{IMQ}
\DeclareMathOperator\MA{M\A}
\DeclareMathOperator\Snc{Lsnc}
\DeclareMathOperator{\QM}{QM}
\newcommand{\relIMG}[1]{\rI_{#1}\!\MG}
\newcommand{\relIG}[1]{\rI_{#1}\!\GG}
\newcommand{\relIP}[1]{\rI_{#1}\!\PP}
\newcommand{\relIMP}[1]{\rI_{#1}\!\MP}
\newcommand{\relIMQ}[1]{\rI_{#1}\!\MQ}
\def\ae{\mathrm{ae}}
\def\aeeq{\overset{\ae}=}
\mathchardef\mhyphen="2D
\title{Motivic integration on Berkovich spaces}
\author{Tommaso de Fernex}
\address{Department of Mathematics, University of Utah, Salt Lake City, UT 48112, USA}
\email{{\tt defernex@math.utah.edu}}
\author{Chung Ching Lau}
\email{{\tt malccad@gmail.com}}
\subjclass[2020]{Primary 14E18, Secondary 12J25}
\keywords{Motivic integration, Berkovich analytification, valuation space}
\thanks{%
The research of the first author was partially supported by NSF Grants DMS-1700769 and DMS-2001254.
The research of the second author was partially supported by a Croucher Foundation Fellowship. 
}
\begin{document}

\begin{abstract}
We define a motivic measure on the Berkovich analytification of an algebraic variety
defined over a trivially valued field, 
and introduce motivic integration in this setting. 
The construction is geometric with a similar spirit as Kontsevich's original definition,
and leads to the formulation of a functorial theory
which mirrors, in this aspect, the approach of
Cluckers and Loeser via constructible motivic functions. 
A version of the integral
over nontrivially valued fields and its relation to Hrushovski and Kazhdan's integration
are also discussed. 
\end{abstract}

\maketitle

\section{Introduction}

Motivic integration was introduced by Kontsevich in 1995 \cite{Kon95} and later
developed by Denef and Loeser in \cite{DL99};
see also \cite{Bat99,DL02,Loo02,Rei02} for further developments
and applications to stringy invariants. 
Following these pioneer works, more advanced theories of integration were later discovered
using model theory. 
Among them, we recall 
the theory of constructible motivic functions due to Cluckers and Loeser \cite{CL08,CL10}.
According to this theory, performing motivic integration corresponds to taking push-forward to a point, 
and pushing forward along other morphisms can be interpreted as integration along the fibers. 
A key feature of Cluckers--Loeser's theory is the functoriality of push-forward, 
which in turns leads to a Fubini type theorem.

In this paper, we propose a new approach to motivic integration 
which aims to maintain the same geometric flavor of the original construction 
while reproducing some of the features of \cite{CL08}.
The goal is to offer a concrete approach which
does not rely on model theory but rather on geometric properties such as 
resolution of singularities and the weak factorization theorem.
Berkovich spaces are used in place of arc spaces, 
and the resulting theory can be viewed as a combination of motivic and Lebesgue measures, 
reflecting a unique feature of Berkovich analytifications
whose topology is a blend of Zariski and Euclidean topologies.
We also present an atomic version of integration, where the measure is concentrated on integral valuations
(viewed as the integral points on the Berkovich space) and the integral takes values in the usual
motivic ring, fully recovering the classical motivic integral. 
Finally, we discuss a relative version of motivic integration over nontrivially 
valued fields which is closely related to, but different from, 
Hrushovski--Kazhdan's integration \cite{HK06}. 

The following paragraphs provide a more detailed overview of the main contents of the paper.

\subsubsection*{Measure and integration}

The original definition of motivic integration is purely geometric.
Working on a smooth variety $X$, one assigns a motivic volume to 
cylinders in the arc space $X_\infty$ of $X$,
and integrate functions of the form $\L^{-\ord(Z)}$ where $Z \subset X$ is a closed subscheme.
Here, $\L := [\A^1_X]$ is the Lefschetz motive,
and the function $\ord(Z) \colon X_\infty \to \R \cup \{\infty\}$
is defined by $\a \mapsto \ord_\a(\I_Z)$ where $\I_Z \subset \O_X$ is the ideal sheaf of $Z$. 
The motivic integral $\int_{X_\infty} \L^{-\ord(Z)} d\m^{X_\infty}$
is expressed as an infinite
sum by stratifying the arc space (away from a set of measure zero)
as a union of cylinders where the function takes constant values.
It follows by resolution of singularities that the integral can be realized 
in the localization of the Grothendieck ring $K_0(\Var_X)$ at the classes $[\P^a_X]$
of all projective spaces. 

Here we propose a similar approach where the arc space $X_\infty$ of the variety $X$
is replaced by its analytification $X^\an$ in the sense of Berkovich \cite{Ber90}.
There are some advantages in performing integration over $X^\an$ that are discussed below. 
The main focus of this paper is on the case where the variety $X$ is defined over 
a field with trivial norm, a setting that is in line with motivic integration
on arc spaces, but a similar theory can be developed 
when $X$ is defined over a nontrivially valued field
(the function field of a complete discrete valuation ring, to be precise), 
which relates more closely, for instance, to \cite{LS03,HK06};
this setting is discussed in the last section of the paper. 
We will mostly restrict to
the space of real valuations $X^{\val} \subset X^\an$;
we will discuss a way to extend the measure in a meaningful way to the
space $X^\beth \subset X^\an$ of semi-valuations centered in $X$,
and explain the obstruction to extend it nontrivially to the whole Berkovich space $X^\an$
when $X$ is not proper. 

The approach relies on the approximation of $X^{\val}$ via certain skeleta, which are fans
of quasi-monomial valuations \cite{Ber99,Thu07}.
While this should be reminiscent of the original approach to motivic integration based on the approximation 
of arc spaces via jet schemes, the two approximations are actually quite different in nature, 
with the approximation of $X^{\val}$ by skeleta more in line, in fact, with the construction of 
the Riemann--Zariski space. 

To get into some details, every log resolution $\p \colon X_\p \to X$, paired with
a simple normal crossing divisor $D_\p = \sum_{i=1}^r D_i$ containing the exceptional locus,  
determines a set $\Sk_\p \subset X^{\val}$ of quasi-monomial valuations, called a `skeleton', 
via the toroidal structure given by the embedding of $X_\p \setminus D_\p$. 
This set $\Sk_\p$ has a natural fan structure indexed by the subsets $I \subset \{1,\dots,r\}$, 
and can be identified with the cone
over the dual complex $\cD(D_\p)$ associated to $D_\p$.
The valuation space 
$X^{\val}$ is the inverse limit of retraction maps $r_\p \colon X^{\val} \to \Sk_\p$. 
Log discrepancies (more precisely, {Mather log discrepancies}, if $X$ is singular)
determine a function $\^A_X$ on $X^{\val}$ that is linear homogeneous on the
faces $\Sk_{\p,I}$ of $\Sk_\p$.
This function is modeled over the log discrepancy function $A_X$
constructed in \cite{JM12,BdFFU15}.
A technical requirement here is that all resolutions $\p$ must factor through the Nash blow-up of $X$.

The Lebesgue measure naturally defined on each face $\Sk_{\p,I}$, rescaled by $e^{-\^A_X}$
and weighted by the motive $[D_I^\o]$ of the stratum 
$D_I^\o := \bigcap_{i\in I} D_i \setminus \bigcup_{j \not\in I} D_j$, 
defines a measure on $\Sk_\p$.
These measures, defined for all $\p$, glue together 
to a measure $\m_X$ on $X^{\val}$ which gives a motivic volume on `cylinders' $r_\p^{-1}(S)$
where $S \subset \Sk_\p$ is any measurable set. Explicitly, for any such set we have 
\[
\m_X(r_\p^{-1}(S)) = \sum_I [D_I^\o] \int_{S \cap \Sk_{\p,I}} e^{-\sum a_i x_i} dx_1\dots dx_{|I|}
\]
where the $x_i$ are suitable Euclidean coordinates on $\Sk_{\p,I}$ and
$a_i = \val_{D_i}(\Jac_\p)$. 
One can interpret this measure as a motivically weighted pull-back of 
suitably normalized Lebesgue measures on all tropicalizations of the underlying variety $X$.
The measure $\m_X$ takes values in the ring
$\MM_X := (K_0(\Var_X)/(\L - 1)) \otimes_\Z\R$.
Taking Euler characteristics, one gets
a measure with values in $\R$.

A similar approach leads to the definitions of integrable motivic functions and motivic integrals. Real valued functions on $X^{\val}$
provide an already interesting example of motivic functions.
For instance, every closed subscheme $Z \subset X$ determines a function $|\I_Z| \colon X^{\val} \to \R$
given by $x \mapsto |\I_Z|_x$. 
This function is integrable and so is any positive
real power of it. If $\p \colon X_\p \to X$ is a
log resolution of $(X,Z)$ and $\I_Z\.\O_{X_\p} = \O_{X_\p}(-b_i D_i)$, then for every $t > 0$
we have 
\[
\int_{X^{\val}} |\I_Z|^t \.d\m_X 
= \sum_I [D_I^\o] \int_{\Sk_{\p,I}} e^{-\sum (a_i+ t b_i) x_i} dx_1\dots dx_{|I|}
\]
where, as before, $a_i = \val_{D_i}(\Jac_\p)$. 
Thinking of elements of $X^{\val}$ as valuations, the integral
can be equivalently written in the form $\int_{X^{\val}} e^{-t \ord(Z)} d\m_X$, 
a notation that makes the connection to the usual motivic integration more evident. 

Showing that the measure and the integral are well defined requires a proof, but once this is
done, properties like the change-of-variables formula follow immediately.
The latter states that if $h \colon Y \to X$ is a resolution of singularities and
$h^{\val} \colon Y^{\val} \to X^{\val}$ is the induced function (a bijection, in this case),
then for any integrable motivic function $f$ on $X^{\val}$ we have
\[
\int_{X^{\val}} f\,d\m_X = \int_{Y^{\val}} (f\o h^{\val})\,|\Jac_h| \,d\m_Y
\]
via the natural push-forward $h_*\colon \MM_Y \to \MM_X$.

\subsubsection*{Functoriality}

The class of integrable motivic functions considered above, 
which we denote by $\IMF(X^{\val})$,
is defined via a direct limit construction using pull-back maps 
along the natural retraction maps between skeleta.
In order to provide a functorial theory, we need enlarge this class 
using an inverse limit construction via push-forward maps. 
The resulting class of functions is denoted by $\IMG(X^{\val})$. 
This requires some additional bookkeeping.
At the finite level, when tracing the functions
on the skeleton $\Sk_\p$ determined by a log resolution $\p \colon X_\p \to X$, 
we need to keep track of information coming from higher resolutions. 
For this reason, we work with certain vector functions, and
refer to elements in $\IMG(X^{\val})$
as integrable motivic Functions, with a capital \emph{F}. 

More generally, we define a class of {integrable motivic Functions} $\relIMG{B}(X^{\val})$ on $X^{\val}$, 
where the symbol $B$ represents a `base' which can be a variety dominated by $X$, 
the analytification of such a variety, or something in between. 
Integrability is intended relatively to such a base. 
The notation $\IMG(X^{\val})$ is reserved for the case where $B = X$. 

The class of integrable motivic functions is stable under push-forward. 
Given a dominant morphism of varieties $b \colon X \to Y$, and working over a common base $B$, 
we define a push-forward map $b_! \colon \relIMG{B}(X^{\val}) \to \relIMG{B}(Y^{\val})$
and prove functoriality.
As in \cite{CL08}, which has served as a guide and inspiration for this part of the paper, 
push-forwards are interpreted as integrations
along fibers, and functoriality as a Fubini theorem. 

Taking $B = Y^{\val}$, we have the projection formula
\[
b_!(b^*(f) \. \bm{g}) = f \. b_!(\bm{g})
\]
where $\bm{g} \in \relIMG{Y^{\val}}(X^{\val})$ and $f$ is any motivic function on $Y^{\val}$. 
Motivic integration over $X$ is realized by taking $B = X$ and setting
\[
\int_{X^{\val}} \bm{g}\, d\m_X := t_X(\bm{g})
\]
for any $\bm{g} \in \IMG(X^{\val})$, where
$t_X \colon \IMG(X^{\val}) \to \MM_X$ is the corresponding push-forward map.
The two approaches are related via a naturally defined inclusion
$\IMF(X^{\val}) \inj \IMG(X^{\val})$ that is compatible with
the respective definitions of integrals.

\subsubsection*{Atomic approach}

Neither the original definition of motivic integration
nor the theory of Cluckers and Loeser require working modulo $\L-1$.
This is however necessary in the approach followed in this paper 
where certain motivic denominators (classes of projective spaces)
that appear in the usual motivic integral are absorbed as numerical denominators
(the Euler characteristic of those projective spaces) that are now coming 
out of the Lebesgue
component of the measure. This allows us to enlarge the algebra of measurable sets and
work with real valued functions such as $e^{-\ord(Z)}$.
One can push this point of view even further by taking 
Euler characteristics and working exclusively with real valued functions. 
Another characteristic of this approach is that it does not require taking any localization
of the Grothendieck ring.

Using an atomic approach that is more line with with
\cite{CL08}, we also develop a parallel theory of motivic integration
on Berkovich spaces where the measure takes 
values in the same motivic ring as the original theory of motivic integration
without needing to work modulo $\L-1$.
This comes at the cost of reducing the algebra of measurable sets 
and requires integrating
functions that looks more like $\L^{-\ord(Z)}$, just like in the usual motivic integration.
However, it has the advantage to fully recover the original motivic integration.

\subsubsection*{Nontrivially valued fields}

The same approach used to define motivic integration on the analytification of a variety 
over a field $k$ with trivial norm can be adapted to the case where $X$ is a proper variety
over a valued field $(K,v)$. Here we consider the case where $K$ is the fraction field of 
a complete discrete valuation ring $R$ with residue field $k$ of characteristic zero. 
Up to a non-canonical isomorphism, we identify $R$ with $k[[t]]$ and $K$ with $k(\hskip-1pt(t)\hskip-1pt)$.
We set $\D := \Spec R$ and assume that there exist a smooth curve $C$ and a variety $Y$ over $k$
such that $X$ is obtained by base-change via a non-constant map $\D \to C$
from a proper flat morphism $Y \to C$.
Under suitable assumptions on existence of resolution of singularities
and weak factorization, much of what we do can be extended to the case where $R$ is 
any complete discrete valuation ring.

The analytification $X^\an$ of $X$ over $K$ implicitly assumes having fixed a normalization of the valuation $v$
(i.e., an embedding of the value group $\G_v$ in $\R$). 
While changing the normalization does not change $X^\an$ as an analytic space, 
it may affect the measure we wish to define on this space. 
It turns out, in fact, that the right thing to do is to work
with all normalizations at once, packaged together as a fibered topological space
$X^\an_{(0,\infty)} \to (0,\infty)$
with fiber over $b \in (0,\infty)$ equal to the analytification corresponding 
to the normalization given by $v(t) = b$.
We define a motivic measure on $X^\an_{(0,\infty)}$ in a similar way as it was done over constant fields, 
and interpret it as an `average measure' on the fibers, which are all homeomorphic to $X^\an$, 
by assigning $(0,\infty)$ measure 1 using the volume form $e^{-u}du$. 
The same approach leads to the definition of motivic integral. 
The Mather log discrepancy function is replaced in this context 
by the weight function $\wt_\om$, which depends on the choice of a 
nonzero rational canonical form $\om$ on $X$, see \cite{MN15}.
The resulting measure is again a mash of motivic and Lebesgue measures. Alternatively, one can 
develop an atomic version in this setting similarly to what we did in the trivially valued case.

The motivic contribution to the integral comes from a specialization map
(a relative version of the center map) just like in Hrushovski--Kazhdan's definition \cite{HK06}.
The fan structure of the skeleta, however,
is encoded differently compared to Hrushovski--Kazhdan's integration. 
The space of integration itself is in fact different, as in our definition 
we work on a bigger space where the valuation of the ground field is
subject to rescaling. One advantage of working on this enlarged space
becomes manifest in the atomic approach, as the support of the 
measure (the integral valuations) lie over different normalizations
of the valuation on the ground field. 

Our space of integration sits naturally in a hybrid space.
We exploit this feature to prove a comparison theorem relating different integrals. 
Working over a valued field $K$ as above, motivic integrals on $X^\an$ can be computed from
pairs of integrals on the valuation space $X^{\val}_0 \subset X^\an_0$ of $X$ 
viewed as a variety over $K$ with the trivial norm,
and on the valuation space $Y^{\val}$ of $Y$ as a variety over $k$. 
Starting with a Cartier divisor $B$ on $Y$ and setting 
$\cB := B \times_C\D$ and $B_K := B \times_X\Spec K$, for every $s \in \R$ we have
\[
\int_{X^\an} e^{-\ord(s\cB)}\,d\ov\m_\om = 
\int_{Y^{\val}} e^{-\ord(K_Y+sB)} \,d\m_Y - 
\int_{X^{\val}_0} e^{-\ord(K_X+sB_K)} \,d\m_X
\]
where $K_X$ and $K_Y$ are canonical divisors defined by the canonical form on $X$ and $Y$,
$\ov\m_\om$ is the `average' motivic measure on $X^\an$ mentioned above, 
and all integrals are viewed as taking values in $\MM(\D)$ via the obvious maps.

\subsubsection*{Acknowledgements}
We thank the referee for useful comments and suggestions.

\section{Preliminaries on Berkovich analytification}
\label{s:preliminaries}

Throughout this section, we work over an algebraically closed field $k$ of characteristic zero.
We regard $k$ as being equipped with the trivial norm.

\subsection{Berkovich analytification}
\label{ss:Ber-analytification}

The Berkovich analytification $X^\an$ of a scheme $X$ over $k$ is a topological space 
parameterizing multiplicative seminorms on $X$ that are trivial on $k$
(this space carries an analytic structure, but we will not use this).
For the convenience of the reader, we briefly recall the construction.
We refer to \cite{Ber90,Thu07} for further details.

For any affine scheme $X = \Spec A$ over $k$, $X^\an$ is defined to
be the set of multiplicative seminorms $|\bla|_x$ on $A$ that restrict to the trivial norm on $k$.
It is endowed with the weakest topology such that for any given $f\in A$, 
the map $|\bla|_x \mapsto |f|_x$ is continuous.
We denote by $\p_X \colon X^\an \to X$ the continuous map sending a seminorm to its kernel. 
If $X$ is affine and $U \subset X$ is any open subscheme, then we define $U^\an := \pi_X^{-1}(U)$ with the induced topology. Note that this is compatible with the above definition if $U$ is affine. 
For an arbitrary scheme $X$ over $k$, we choose an open affine cover $\{U_i\}$ and
define $X^\an$ by gluing the spaces $U_i^\an$ along $(U_i\cap U_j)^\an$.
The projections $\p_{U_i}$ also glue together to give a continuous map $\p_X \colon X^\an \to X$. 
We write $x$ when we think of a multiplicative seminorm $|\bla|_x$ as a point of $X^\an$, 
and denote by $\x_x \in X$ its image via $\p_X$. The point $x$ 
can be equivalently viewed as a real valuation $v_x := -\log |\bla|_x$ of the residue field $k(\x_x)$ of $\x_x$. 
We can therefore think of a point $x$ of the Berkovich analytification of $X$ as a pair $(\x_x,v_x)$.
The space $X^\an$ is equipped with an $\R_{> 0}$-action given by rescaling the values of a valuation $v_x$.

We denote by $X^\beth \subset X^\an$ the compact analytic subspace consisting of the points
$x = (\x_x,v_x)$ such that the valuation $v_x$ of $k(\x_x)$ has center in the closure of $\x_x$ in $X$;
we say for short that $v_x$ has center in $X$, and denote the center by $c_X(v_x)$. 
Note that $X^\beth = X^\an$ whenever $X$ is proper over $k$. 
When $X$ is a variety, we denote by $X^\bir \subset X^\an$ the inverse image of the generic point of $X$
under the map $\p_X$, and let $X^{\val} := X^\bir \cap X^\beth$.
This is the set of real valuations on the function field of $X$ with center in $X$; 
we will sometimes denote its points simply by $v_x$. 
The construction of these spaces is functorial. In particular, any morphism 
of schemes $p \colon X \to Y$ over $k$ induces maps $p^\an \colon X^\an \to Y^\an$
and $p^\beth \colon X^\beth \to Y^\beth$, 
and if $p$ is a dominant morphism of varieties
then these maps restrict to a map $p^{\val} \colon X^{\val} \to Y^{\val}$.

\subsection{Skeleta and quasi-monomial valuations}
\label{ss:skeleta-QM}

Let $X$ be an algebraic variety over $k$.
We start by fixing some terminology.

\begin{definition}
A \emph{simple normal crossing (snc) model} $(X_{\p},D_\p)$ over $X$ is given by a log resolution
$\p \colon X_{\p} \to X$ and a reduced simple normal crossing divisor $D_\p$ on $X_{\p}$
such that
$\p \colon X_{\p} \to X$ factors through the Nash blow-up of $X$,
$D_\p$ contains the exceptional locus of $\p$,
and every stratum of $D_\p$ is irreducible.
We refer to $D_\p$ as the \emph{boundary divisor} of the snc model.
A snc model $(X_{\p},D_\p)$ is said to be a {\it log resolution} of a non-zero ideal
sheaf $\fa \subset \O_X$ if $\fa\.\O_{X_\p}$ is a locally principal ideal
cosupported within the support of $D_\p$.
\end{definition}

\begin{remark}
The condition that $\p$ factors through the Nash blow-up is to ensure that the
Jacobian ideal $\Jac_\p := \Fitt^0(\Om_{X_\p/X}) \subset \O_{X_\p}$ is locally principal.
This will become useful in order to compare Jacobian ideals on
different snc models in terms of combinatorial information 
attached to their respective boundary divisors. 
\end{remark}

It will be convenient to enlarge the class of snc models to allow singularities
away from the divisor. 

\begin{definition}
\label{d:snc-model}
A \emph{local snc model} $(X_{\p},D_\p)$ over $X$ is given by a 
proper birational morphism $\p \colon X_{\p} \to X$ and a 
divisor $D_\p$ on $X_{\p}$
such that $X$ is covered by two open sets $V$ and $W$ with the following properties:
\begin{enumerate}
\item
$\Supp(D_\p) \subset \p^{-1}(V)$ and $(\p^{-1}(V), D_\p)$ is a snc model over $V$, and
\item
the restriction of $\p$ to $\p^{-1}(W)$ gives an isomorphism $\p^{-1}(W) \cong W$. 
\end{enumerate}
\end{definition}

The main reason for enlarging the class from snc models to local snc models
is to include the variety $X$ itself, with the identity map $\id_X \colon X \to X$
and empty boundary divisor. While for most of the paper this is not needed
and one can restrict to working with snc models, allowing the identity map $\id_X \colon X \to X$
to be counted as a model
will come useful in \cref{s:pushforward} in order to realize
motivic integrals as a push-forwards. 

We use the symbol $\p$ to denote a local
snc model and not just the underlying morphism,
and hence write $\p \colon (X_{\p},D_\p) \to X$. 
To avoid cluttering the notation, we will typically drop the label $\p$ from the divisor 
and just write $\p \colon (X_\p,D) \to X$ for the model. We stress, however, that even 
if the label $\p$ has been removed from the notation, 
fixing $\p$ means that we are fixing the divisor $D$ as well.

\begin{definition}
Given a morphism of varieties $p \colon X \to Y$, a \emph{morphism between two local snc models}
$\p \colon (X_{\p},D) \to X$ and $\s \colon (Y_\s,E) \to Y$
is a morphism $q \colon X_\p \to Y_\s$ such that $\s\o q = p \o \p$
and $\Supp(q^*E) \subset \Supp(D)$. 
We write $q \colon (X_\p,D) \to (Y_\s,E)$.
\end{definition}

\begin{definition}
Given two local snc models $\p \colon (X_{\p},D) \to X$ and $\p' \colon (X_{\p'},D') \to X$, 
we say that $(X_{\p'},D')$ \emph{dominates}
$(X_{\p},D)$, and write $(X_{\p'},D') \ge (X_{\p},D)$ or $\p' \ge \p$, if there is a morphism 
of local snc models $\a \colon (X_{\p'},D') \to (X_{\p},D)$, that is, a proper birational morphism
$\a \colon X_{\p'} \to X_{\p}$ such that $\p' = \p \o \a$ and $\Supp(\a^*D) \subset \Supp(D')$.  
We say that such a map $\a$ is a \emph{smooth transversal blow-up} if $\a$
is the blow-up of a smooth center contained in the smooth locus of $X_\p$ that is
transversal to $D$, and $D'$ is the sum of the proper transform of $D$ and the exceptional divisor. 
(We recall that a subvariety $W$ of a smooth variety $V$ is said to be \emph{transversal} to a simple
normal crossing divisor $E$ if in the formal neighborhood at any closed point,
$W$ and $E$ are locally defined by monomial equations in the same system of coordinates.)  
\end{definition}

If $(X_{\p},D)$ is a local snc model over $X$ and $D = \sum_{i=1}^r D_i$ is the decomposition into irreducible components, 
then for every $I \subset \{1,2,\dots,r\}$ we denote
\[
D_I := \bigcap_{i \in I} D_i 
\quad\text{and}\quad
D_I^\o := \bigcap_{i \in I} D_i \setminus \bigcup_{j \not\in I} D_j,
\]
with the convention that $D_\emptyset = X_{\p}$ and $D_\emptyset^\o = X_{\p} \setminus \Supp(D)$.
Note that, according to our definition, $D_I$ is irreducible.

Following \cite{Thu07,JM12}, 
every local snc model $\p \colon (X_{\p},D) \to X$ determines 
a set $\Sk_\p \subset X^{\val}$, called the
\emph{skeleton} of $X^{\val}$ associated to $(X_\p,D)$, 
whose elements are monomial valuations on $X_\p$ with respect to its toroidal structure
(this set is denoted $\QM(X_\p,D)$ in \cite{JM12}).
If $\p = \id_X$ (with empty divisor), then $\Sk_{\id_X}$
only consists of one point, the trivial valuation of $k(X)$. 
For any index set $I$, we denote by $\Sk_{\p,I} \subset \Sk_{\p}$
the set of quasi-monomial valuations centered at the generic point of $D_I$.
There are natural and compatible deformation retraction maps
$r_\p \colon X^{\val} \to \Sk_\p$
and $r_{\p'\p} \colon \Sk_{\p'} \to \Sk_\p$,
and $X^{\val} = \invlim_\p \Sk_\p$. 
For every $\p' \ge \p$ there is an inclusion $\Sk_\p \subset \Sk_{\p'}$
of subsets of $X^{\val}$. The elements of $X^\qm := \bigcup_\p \Sk_\p$
are by definition the \emph{quasi-monomial valuations} on $X$.

\begin{definition}
For every $I$, we call $\Sk_{\p,I}$ a \emph{face} of $\Sk_\p$.
A subset $R \subset \Sk_\p$ is said to be a \emph{potential face}
of $\Sk_\p$ if there is a local snc model $\p' \ge \p$ such that
$R$ is a face $\Sk_{\p',I'}$ of $\Sk_{\p'}$ under the 
natural inclusion $\Sk_\p \subset \Sk_{\p'}$. 
The \emph{dimension} of a face is its dimension over $\R$. 
The \emph{relative codimension} of a potential face $R \subset \Sk_\p$
is the codimension of $R$ inside the face $\Sk_{\p,I} \subset \Sk_\p$
containing $R$. 
\end{definition}

\begin{definition}
We say that $\p'$ is a \emph{refinement} of $\p$ if $\p' \ge \p$ and $\Sk_{\p'} = \Sk_\p$
as subsets of $X^{\val}$. 
\end{definition}

\begin{remark}
Every potential face of $\Sk_\p$ is contained in a face $\Sk_{\p,I}$, and 
any potential face of $\Sk_\p$ 
can be realized as a face of $\Sk_{\p'}$ for some refinement $\p' \ge \p$. 
Note also that $\p'$ is a refinement of $\p$ if and only if the induced map $\a \colon X_{\p'} \to X_\p$
is the composition of a sequence of blow-ups of strata. 
\end{remark}

Given any $\p' \ge \p$, we write $I' \succeq I$ whenever the generic point of a stratum $D'^\o_{I'}$ maps
to a point of a stratum $D_I^\o$. Note that this is equivalent to having $r_{\p'\p}(D'^\o_{I'}) \subset D^\o_I$, 
and also to the condition that the map 
$\Sk_{\p',I'} \to \Sk_\p$ induced by $r_{\p'\p}$ factors through $\Sk_{\p,I}$.

If $g \colon X' \to X$ is a proper birational morphism of varieties and $(X_{\p},D)$ is
a local snc model over both $X$ and $X'$, then $\Sk_{\p}$ can be viewed both as a subset of $X^{\val}$
and a subset of $(X')^{\val}$ via the natural identification given by
the induced map $g^{\val} \colon (X')^{\val} \to X^{\val}$.

The closure of $\Sk_{\p}$ in $X^\beth$ is denoted by $\ov\Sk_\p$ and is called
the \emph{skeleton} of $X^\beth$ associated to $(X_{\p},D)$. Note that,
differently from $\Sk_\p$, the closure $\ov\Sk_\p \subset X^\beth$ does depend on the model $X$.
It is proven in \cite{Thu07} that the retractions $r_\p$ extend to deformation retractions 
$\ov r_\p \colon X^\beth \to \ov\Sk_\p$ and $X^\beth = \invlim_\p \ov\Sk_\p$, 
but we will not use this.

\subsection{Mather log discrepancy function}
\label{ss:^A_X}

The condition that $X_{\p}$ dominates the Nash blow-up of $X$ 
means that the Jacobian ideal $\Jac_\p \subset \O_{X_\p}$ is locally principal.
If $X_{\p'}$ is any other local snc model
dominating $X_{\p}$ via a morphism $\a$, then $\Jac_{\p'} = \Jac_\p\.\, \Jac_\a$
(this formula relies on the fact that the exceptional locus of $\p$
is contained in the smooth locus of $X_\p$). 
For every divisor $E$ on $X_{\p}$, the \emph{Mather discrepancy} of $X$ along $E$ is defined 
to be the integer $\^k_E := \val_E(\Jac_\p)$. 
The following property is a variant of similar results from \cite{JM12,BdFFU15}.

\begin{proposition}
There is a unique lower-semicontinuous function $\^A_X \colon X^{\val} \to \R \cup \{\infty\}$
satisfying the following properties:
\begin{enumerate}
\item
\label{eq:^A(valE)}
$\^A_X(\val_E) = \^k_E(X)+1$ for every divisorial valuation $\val_E$ on $X$;
\item
$\^A_X$ is continuous on $\Sk_\p$ for all $\p$;
\item
$\^A_X = \sup_\p \^A_X \o \ov r_\p$. 
\end{enumerate}
This function is homogeneous with respect to rescaling of valuations
(i.e., $\^A_X(t\.v) = t\.\^A_X(v)$ for every $v \in X^{\val}$ and every $t > 0$), 
is finite on $\Sk_\p$ for every local snc model $\p$, and is positive away from the trivial valuation.
\end{proposition}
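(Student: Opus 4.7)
The plan is to first define $\^A_X$ on $X^\qm=\bigcup_\p\Sk_\p$ via an explicit piecewise-linear formula on each skeleton, verify that these local definitions agree under dominance of snc models, and then extend to all of $X^{\val}$ by setting $\^A_X(v):=\sup_\p \^A_X(r_\p(v))$.

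For a local snc model $\p\colon(X_\p,D=\sum_{i=1}^r D_i)\to X$ and a quasi-monomial valuation $v\in\Sk_{\p,I}$ with weights $(t_i)_{i\in I}$ on the $D_i$ at the generic point of $D_I$, set
\[
\^A_X(v) := \sum_{i\in I}(\^k_{D_i}(X)+1)\,t_i.
\]
This is linear on each face, hence continuous and homogeneous on $\Sk_\p$, and reproduces property (1) at the vertices $\val_{D_i}$. To check consistency under dominance $\p'\ge\p$, I would reduce via weak factorization to a single smooth transversal blow-up $\alpha\colon X_{\p'}\to X_\p$ of a stratum $D_J$ with exceptional divisor $E$. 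The identity $\Jac_{\p'}=\Jac_\p\.\Jac_\alpha$ (which uses the assumption that $\p$ factors through the Nash blow-up) yields
\[
\^k_E(X)=\sum_{j\in J}\^k_{D_j}(X)+(|J|-1)\quad\text{and}\quad \^k_{D'_i}(X)=\^k_{D_i}(X)\ \text{for proper transforms,}
\]
and a short linearity argument then shows that the formulas on $\Sk_{\p'}$ and $\Sk_\p$ agree on $\Sk_\p\subset\Sk_{\p'}$, giving a well-defined continuous function on $X^\qm$.

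Now extend via the supremum. Since each $\^A_X\o r_\p$ is continuous on $X^{\val}$, the supremum is lower semicontinuous, and property (3) is built in. Property (2) reduces to showing that for $v\in\Sk_\p$ the explicit formula achieves the supremum, which follows from the monotonicity $\^A_X(r_{\p'}(v))\ge\^A_X(r_\p(v))$ for $\p'\ge\p$ discussed below. Uniqueness: (1)--(2) determine $\^A_X$ on each $\Sk_\p$ because quasi-monomial valuations with rational weights (which are divisorial, realized as vertices of stratum blow-ups) are dense in every face, and (3) then fixes the extension. Homogeneity and finiteness on $\Sk_\p$ are immediate from the formula, and positivity away from the trivial valuation follows because $\^k_{D_i}+1\ge 1$ forces the formula to be positive on $X^\qm$ away from the trivial valuation, and via (3) for general $v$.

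The main obstacle is precisely this monotonicity under dominance. The subtle case is when $\alpha\colon X_{\p'}\to X_\p$ introduces a new divisor $E$ whose center on $X_\p$ is not a stratum of $D_\p$ (for instance, a blow-up of a smooth point outside $D$): then $r_{\p'\p}$ genuinely drops information, so the supremum in (3) can grow strictly with $\p'$ and may reach $\infty$ on non-quasi-monomial valuations. After reducing to elementary blow-ups via weak factorization, the inequality is verified by an explicit local computation of $r_{\p'\p}$ in the toroidal coordinates near the relevant stratum, combined with the formula for $\^k_E(X)$ above. This construction mirrors the log discrepancy function developed in \cite{JM12,BdFFU15}.
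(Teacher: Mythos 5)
Your proposal is correct in its conclusions and in the key formulas, but it takes a genuinely different route from the paper. The paper's proof is much shorter: it fixes a single local snc model $\p_0$, defines $\^A_X := A_{X_{\p_0}} + \ord(\Jac_{\p_0})$ where $A_{X_{\p_0}}$ is the log discrepancy function of the smooth variety $X_{\p_0}$ constructed in \cite{JM12}, and then imports continuity, lower semicontinuity, the supremum formula, and homogeneity wholesale from \cite[Proposition~5.1 and Lemma~5.7]{JM12}. The only new computation is the check that $\^A_X(\val_E) = \^k_E(X)+1$, via $A_{X_{\p_0}}(\val_E) = \val_E(\Jac_\a)+1$ and $\Jac_\p = \Jac_{\p_0}\.\Jac_\a$, which in turn shows independence of $\p_0$. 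Your approach instead re-derives the properties from scratch: you define $\^A_X$ on $X^\qm$ by the explicit piecewise-linear formula $\^A_X(v) = \sum_{i\in I}(\^k_{D_i}(X)+1)\,t_i$, verify consistency under a smooth transversal blow-up of a stratum using $\^k_E(X)=\sum_{j\in J}\^k_{D_j}(X)+(|J|-1)$, and then take the supremum over retractions. This is essentially re-proving the construction of the log discrepancy function from \cite{JM12}, twisted by $\ord(\Jac)$, rather than citing it. What the paper's route buys is brevity and a clean separation of concerns (the Jacobian correction is a continuous homogeneous function, so it perturbs each property of $A_{X_{\p_0}}$ harmlessly). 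What your route buys is self-containment: it makes visible exactly which combinatorial identities on Mather discrepancies drive the consistency, at the cost of having to redo the weak-factorization reduction and the monotonicity argument that \cite{JM12} already carries out for $A$.

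A couple of points to tighten if you were to write this up in full. First, the consistency check under dominance: smooth transversal blow-ups need not have strata as centers, and when the center $C$ is not a stratum the faces of $\Sk_\p$ are simply \emph{not} subdivided (the new faces of $\Sk_{\p'}$ involving the exceptional divisor lie outside $\Sk_\p$), so consistency is automatic in that case; the nontrivial case, which you do treat, is precisely $C=D_J$ a stratum. Second, the monotonicity $\^A_X\circ r_\p \le \^A_X\circ r_{\p'}$ for $\p'\ge\p$ is needed to show that the supremum is attained on quasi-monomial valuations and that property (2) follows; this decomposes as monotonicity of $A_{X_{\p_0}}$ under retraction (from \cite{JM12}) plus monotonicity of $\ord(\Jac_{\p_0})$ under retraction (the general fact that $\ord_{r_\p(v)}(\fa) \le \ord_v(\fa)$ for any ideal $\fa$), both of which point in the same direction. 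You flag the issue correctly but would want to spell this out.
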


\begin{proof}
Let $\p_0$ be a local snc model, and let $\Jac_{\p_0} \subset \O_{X_\p}$
be the Jacobian ideal. Using the natural identification $X_{\p_0}^{\val} \simeq X^{\val}$, we define 
$\^A_X := A_{X_{\p_0}} + \ord(\Jac_{\p_0})$ where $A_{X_{\p_0}}$ is the \emph{log discrepancy function} 
defined in \cite{JM12}. 
For every divisorial valuation $\val_E$, if $E$ is a divisor on a local snc model $\p \ge \p_0$
and $\a \colon X_\p \to X_{\p_0}$ is the induced map, then $A_{X_{\p_0}}(\val_E) = \val_E(\Jac_\a) + 1$
by \cite[Proposition~5.1]{JM12}, hence
the equality $\Jac_\p = \Jac_{\p_0}\.\Jac_\a$ implies that
$\^A_X(\val_E) = \val_E(\Jac_\p) + 1$, which gives \eqref{eq:^A(valE)}. 
The function $\ord(\Jac_{\p_0})$ is homogeneous and 
continuous by the definition of the topology on $X_{\p_0}^{\val}$, 
and the proposition follows easily from the properties of the function $A_{X_{\p_0}}$ listed in 
\cite[Proposition~5.1 and Lemma~5.7]{JM12} and the fact that this function is homogeneous and strictly 
positive on divisorial valuations. 
The properties just established for the function $\^A_X$, in conjunction with
the property in \eqref{eq:^A(valE)}, imply that the definition of $\^A_X$
does not depend on the choice of model $\p_0$. 
\end{proof}

\begin{definition}
We call $\^A_X$ the \emph{Mather log discrepancy function}.
\end{definition}

\section{Motivic  measure}
\label{s:measure}

The purpose of this section is to define a motivic measure on 
$X^{\val}$ where $X$ is an algebraic variety 
over an algebraically closed field $k$ of characteristic zero.

We start by defining the motivic ring in which the measure will take values.

\subsection{Motivic ring}

Let $S$ be a scheme. 
We denote by $K_0(\Var_S)$ the Grothendieck ring generated by isomorphism classes $[V]_S$ of
separated schemes $V$ of finite type over $S$ modulo the relation $[V]_S = [V \setminus Z]_S + [Z]_S$
whenever $Z \subset V$ is a closed subscheme, with product defined by
$[V]_S\.[W]_S := [V \times_S W]_S$. In this ring, we have $0 = [\emptyset]_S$ and $1 = [S]_S$. 
We denote by $\L_S$ the class of $\A^1_S$.

In this paper, we work with the following version of the motivic ring:
\[
\MM_S := \big(K_0(\Var_S)/(\L_S - 1)\big)\otimes_\Z\R.
\]
The element of $\MM_S$ determined by a scheme $V$ over $S$ is still be denoted by $[V]_S$, 
or simply by $[V]$ if the base $S$ is clear from the context. 
When $S = \Spec k$, we just write $\MM_k$. 

Any morphism of varieties $p \colon S \to T$ induces a push-forward group homomorphism
$p_* \colon K_0(\Var_S) \to K_0(\Var_T)$ defined by mapping a class $[V]_S$ of a scheme $V$ over $S$
to the class $[V]_T$ of $V$ viewed as a scheme over $T$. Since 
$[\A^1_S]_T - [S]_T = ([\A^1_T]_T - [T]_T)\.[S]_T$ in $K_0(\Var_T)$, 
this induces a push-forward group homomorphism 
$p_* \colon \MM_S \to \MM_T$. The morphism $p$ also induces a 
pull-back group homomorphism $p^* \colon \MM_T \to \MM_S$ given by
$[V]_T \mapsto [V\times_T S]_S$.

\begin{remark}
The motivic ring used in the usual (geometric) motivic integration is different
from $\MM_S$ but, as we shall discuss later, the two are closely related. More precisely, 
$\MM_S$ is closely related to the ring $K_0(\Var_S)[[\P_S^a]^{-1}]_{a \ge 1}$ where usual 
motivic integrals can be shown to take values. Intuitively, 
by setting $\L_S = 1$ and allowing rational
coefficients, the denominators $[\P_S^a]$ appearing
in the above localization get encoded in $\MM_S$ as rational coefficients. 
\end{remark}

\subsection{Measure}
\label{ss:motivic-L-measure}

In \cref{ss:^A_X}, we introduced the Mather discrepancy function $\^A_X$ on $X^{\val}$. 
This function is piecewise linear on each $\Sk_\p$, in the following sense. 
Let $I = \{i_1,\dots,i_s\}$ be such that $D_I \ne \emptyset$.
Assuming $I \ne \emptyset$, once the indices $i_1,\dots,i_s$ are ordered, there is a canonical isomorphism
$\ff_{\p,I} \colon \Sk_{\p,I} \xrightarrow{\sim}  \R^s_{>0}$
given by $v \mapsto (v(D_{i_1}),\dots,v(D_{i_s}))$,
and $\^A_X$ is the linear function on $\Sk_{\p,I}$ 
determined by the conditions $\^A_X(\val_{D_i}) = \^a_{D_i}(X)$.
Under this isomorphism, the Lebesgue measure $\n$ on $\R^s_{>0}$
induces a measure $\ff_{\p,I}^*(\n)$ on $\Sk_{\p,I}$. After rescaling by $e^{-\^A_X}$, we define
the measure 
\[
\n_{\p,I} := e^{-\^A_X}\.\ff_{\p,I}^*(\n)
\]
on $\Sk_{\p,I}$. Note that $\Sk_{\p,I}$ is a $\sigma$-finite measurable space under this measure. 
If $x_1,\dots,x_s$ are the coordinates of $\R^s$, then we also define the form
\[
\om_{\p,I} := e^{-\^A_X}\.\ff_{\p,I}^*(dx_1 \wedge \dots \wedge dx_s)
\]
on $\Sk_{\p,I}$. We have $|\om_{\p,I}| = d\n_{\p,I}$. 
If $I = \emptyset$ then $\Sk_{\p,I}$ is just a point; 
in this case we extend the above notation using the convention that $\R_{>0}^0 = \{0\}$. 
We set $\Sk_{\p,I} = \emptyset$ if $D_I = \emptyset$.
The decomposition $\Sk_{\p} = \bigsqcup_I\Sk_{\p,I}$ determines a fan structure on $\Sk_{\p}$. 

Let now $R \subset \Sk_\p$ be any potential face. Pick a local snc model $\p' \ge \p$
such that $R = \Sk_{\p',I'}$ for some $I'$, and consider the form
$\om_R := \om_{\p',I'}$ and  the measure $\n_R := \n_{\p',I'}$. 
It is immediate to verify that these definitions 
are independent of the choice of model $\p'$.
The form $\om_R$ is only well-defined up to sign, 
which depends on the order of the indices in $I'$, and we have $|\om_R| = d\n_R$.
Stating the obvious, we stress that whenever $R \subset \Sk_\p$ is an actual face $\Sk_{\p,I}$, 
we can take $\om_R = \om_{\p,I}$, and we have $\n_R = \n_{\p,I}$.

Using these volume forms, we define a $\s$-algebra
of measurable sets on $\Sk_{\p,I}$, as follows.

\begin{definition}
A subset $S \subset \Sk_\p$ is \emph{measurable}
if for every potential face $R \subset \Sk_\p$
the set $S\cap R$ is measurable with respect to the measure $\n_R$. 
We denote by $\S(\Sk_\p)$ the collection of measurable subsets $S \subset \Sk_\p$.
Then we define
\[
\S_\p(X^{\val}) := \{ r_\p^{-1}(S) \subset X^{\val} \mid S \in \S(\Sk_\p) \}.
\]
Finally, we define the \emph{measure} $\m_\p$ on $\S_\p(X^{\val})$ with values in $\MM_X$ by setting
\[
\m_\p(r_\p^{-1}(S)) := 
\sum_I [D_I^\o]_X \, \n_{\p,I}(S\cap \Sk_{\p,I})
\]
for every $S \in \S(\Sk_\p)$.
\end{definition}

\begin{remark}
Imposing measurability conditions on all potential faces and not just on the faces of $\Sk_\p$
is natural once one realizes that any potential face is a face on $\Sk_{\p'}$
for some refinement $\p'$ of $\p$, and $\Sk_{\p'} = \Sk_\p$ as subsets of $X^{\val}$. 
\end{remark}

Setting $a_i := \^A_X(\val_{D_i})$, we have 
\[
\m_\p(r_\p^{-1}(S)) = 
\sum_I [D_I^\o]_X \int_{\ff_{\p,I}(S\cap \Sk_{\p,I})} e^{-\sum_{i\in I} a_ix_i} dx_1\dots dx_s.
\]

\begin{theorem}
\label{t:existence-measure}
Denoting $\S(X^{\val}) := \bigcup_\p \S_\p(X^{\val})$
where the union is taken over all local snc models $\p \colon (X_{\p},D) \to X$,
the measures $\m_\p$ glue together to give a function
\[
\m_X \colon \S(X^{\val}) \to \MM_X.
\]
\end{theorem}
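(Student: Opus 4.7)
The plan is to verify compatibility of the measures $\m_\p$ on overlaps of their domains: given $\p' \ge \p$ and $S \in \S(\Sk_\p)$, the preimage $r_{\p'\p}^{-1}(S)$ should lie in $\S(\Sk_{\p'})$ and satisfy
\[
\m_{\p'}(r_{\p'}^{-1}(r_{\p'\p}^{-1}(S))) = \m_\p(r_\p^{-1}(S)).
\]
Since any two local snc models are dominated by a third, and since $r_\p = r_{\p''\p}\o r_{\p''}$ whenever $\p'' \ge \p$, this compatibility is enough for the glueing. The retraction $r_{\p'\p}$ is continuous and linear on each face of $\Sk_{\p'}$, so preimages of measurable sets are measurable face by face, and (passing to a common refinement) on every potential face as well; thus measurability is automatic, and the real content is the numerical identity above.

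Applying the weak factorization theorem to the induced birational map $\a\colon X_{\p'}\to X_\p$, I would reduce to the case where $\a$ is a single smooth blow-up with smooth center $Z$ transversal to $D$. Let $J$ index the components of $D$ containing $Z$, let $c = \codim_{X_\p}(Z)$, and let $E$ be the exceptional divisor. A direct computation based on $\Jac_{\p'} = \Jac_\p\.\Jac_\a$ and $\ord_E(\Jac_\a) = c-1$ yields
\[
\^A_X(\val_E) = \sum_{i\in J} a_i + (c - |J|),
\]
where $a_i := \^A_X(\val_{D_i})$. Two subcases arise: the \emph{refinement subcase} $|J|=c$, in which $Z = D_J$, $\Sk_{\p'} = \Sk_\p$ as subsets of $X^{\val}$, and the fan of $\Sk_{\p'}$ is the star subdivision of that of $\Sk_\p$ at the new ray $\R_{\ge 0}\val_E$; and the \emph{non-refinement subcase} $|J|<c$, in which $\Sk_{\p'}$ strictly contains $\Sk_\p$ and acquires new faces $\Sk_{\p',I'\cup\{E\}}$ with $I'\supseteq J$.

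The main obstacle lies in the non-refinement subcase. For each face $\Sk_{\p,I}$ with $I\supseteq J$, I would collect the contributions to $\m_{\p'}(r_{\p'}^{-1}(r_{\p'\p}^{-1}(S)))$ from the new faces of $\Sk_{\p'}$ lying over it. Using the retraction identity $v(D_i) = v(D'_i) + v(E)$ for $i \in J$, the substitution $u_i = t_i + x_E$ allows integrating out the extra coordinate $x_E$ against $e^{-\^A_X(\val_E)x_E}$, producing a rational factor $1/(c-|J|)$ multiplying exactly the Lebesgue integrand that appears in $\m_\p$ on $\Sk_{\p,I}$. Simultaneously, the stratification of the fiber $\a^{-1}(D^\o_I)$ into the strata $D'^\o_{I'\cup\{E\}}$ realizes, via scissor relations in $K_0(\Var_X)$, the combined motivic weight over $D^\o_I$ as a $\mathbb{P}^{c-|J|-1}$-bundle contribution; since $[\mathbb{P}^{c-|J|-1}]_X = c-|J|$ in $\MM_X$ (a consequence of $\L_X = 1$), this cancels the rational factor precisely, yielding $[D^\o_I]_X$ times the $\p$-integrand. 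The refinement subcase is analogous but simpler: Lebesgue additivity across the star subdivision combines with scissor relations to produce the match, with residual strata (the ones whose open part is a $\mathbb G_m$-bundle) contributing a factor $[\mathbb{G}_m]_X = \L_X - 1 = 0$ that drops out of $\MM_X$. Iterating along the factorization yields compatibility for arbitrary $\p' \ge \p$, and therefore a well-defined function $\m_X\colon\S(X^{\val})\to\MM_X$.
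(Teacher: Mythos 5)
Your high-level strategy coincides with the paper's: reduce via the weak factorization theorem to a single smooth transversal blow-up $\a\colon X_{\p'}\to X_\p$ with center $C$ of codimension $c$, and verify by direct computation that $\m_\p$ and $\m_{\p'}$ agree. The formula $\^A_X(\val_E) = \sum_{i\in J}a_i + (c-|J|)$ is correct, and your refinement subcase (where $C$ is itself a stratum, so $\Sk_{\p'}=\Sk_\p$) is handled more or less correctly: the codimension $\ge 2$ new faces carry $\G_m$-bundle strata of class $0$ in $\MM_X$, and the top-dimensional new faces partition $\Sk_{\p,I}$ with the right motivic weight.

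The gap is in the non-refinement subcase when $J\ne\emptyset$. After the substitution $u_i = t_i + x_E$ for $i\in J$, the positivity constraints $t_i > 0$ translate into $x_E < u_i$ for all $i\in J$, so the $x_E$-integral over the new top face $\Sk_{\p',I\cup\{E\}}$ runs over $(0,\min_{i\in J}u_i)$, \emph{not} over $(0,\infty)$. It therefore produces $\bigl(1 - e^{-(c-|J|)\min_{i\in J}u_i}\bigr)/(c-|J|)$, not $1/(c-|J|)$. The boundary term $e^{-(c-|J|)\min u_i}/(c-|J|)$ is not an error; it is exactly cancelled by the contributions of the codimension-one new faces $\Sk_{\p',\,(I\cup\{E\})\setminus\{j\}}$ for $j\in J$ (whose retraction images lie inside the potential subfaces $\{u_j = \min_i u_i\}$ of $\Sk_{\p,I}$, and whose strata have motivic class $[C_I^\o]$). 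Your plan never discusses these codimension-one faces, so the identity cannot close when $J\ne\emptyset$; the clean $1/(c-|J|)$ cancellation you describe only happens when $J=\emptyset$ (center disjoint from $\Supp D$). This is precisely the content of Case~2 in the paper's Lemma~\ref{l:existence-measure-one-blowup}, which decomposes $\ff_{\p,I}(S)$ into the pieces $\ff_{\p,I}(S)_j = \{u_j = \min_{i\le t}u_i\}$ and matches each boundary term against a codimension-one new face.

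A secondary, repairable gap: you assert that applying weak factorization to $\a\colon X_{\p'}\to X_\p$ reduces everything to single blow-ups, but you do not check that the intermediate spaces $X_{\p_i}$ and their boundaries are again local snc models over $X$, nor do you handle the case where the two models are not related by a morphism. The paper first resolves singularities away from the boundary, then uses resolution of marked ideals (W\l odarczyk) to arrange $\Supp(D) = X_\p\setminus U$ for a common isomorphism locus $U$, and invokes the form of weak factorization (AKMW, Theorem~0.3.1) in which all intermediate spaces dominate $X_\p$ and all blow-up centers are disjoint from $U$; this is what guarantees the intermediate steps remain in the category of local snc models.
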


We postpone the proof of this theorem to \cref{ss:proof}.

\begin{definition}
We call $\m_X$ the \emph{motivic  measure} on $X^{\val}$, and $\S(X^{\val})$
the \emph{algebra of measurable subsets} of $X^{\val}$. 
\end{definition}

\begin{remark}
$\S(X^{\val})$ is a Boleean algebra under the operations of sets, 
and $\m_X$ is additive on finite disjoint sums.
Note, however, that $\S(X^{\val})$ is not a $\s$-algebra. 
\end{remark}

\subsection{Extended motivic measure}
\label{ss:extended-measure}

It is natural to extend the measure $\m_X$ to a measure on $X^\beth$ by setting
the measure equal to the trivial measure on the complement $X^\beth \setminus X^{\val}$.
For all purposes, one can pretend that the theory of integration introduced in this paper
is in fact developed over $X^\beth$.

Setting the measure equal to zero on $X^\beth \setminus X^{\val}$
is consistent with the fact that 
for every $\p$ the function $\^A_X$ admits a continuous extension
from any skeleton $\Sk_\p$ to its closure $\ov\Sk_\p$ 
where the value on $\ov\Sk_\p \setminus \Sk_\p$ is set equal to 
$\infty$, hence $e^{-\^A_X}$ vanishes over there.
Extending the measure in this way, we have that
for every proper closed subscheme $Y \subsetneq X$, 
the set $Y^\beth \subset X^\beth$ is a measurable set of measure zero. 
This is in analogy with the fact that $Y_\infty \subset X_\infty$,  
being the intersection of a sequence of measurable sets whose measure converges to
zero in the motivic ring $\^\cM_X$, 
can be regarded as a set of measure zero from the point of view of motivic integration.

There is however a different way of extending the measure to $X^\beth$ 
so that it gives a nontrivial measure on the complement
$X^\beth \setminus X^{\val}$. This can be done by supplementing
the rescaling factor $e^{-\^A_X}$, which vanishes over $X^\beth \setminus X^{\val}$, 
with the functions $e^{-\^A_V}$ where $V$ ranges among the
subvarieties of $X$. The point is that, set theoretically, 
$X^\beth = \bigsqcup_{V \subset X} V^{\val}$ 
where the union is taken over all closed subvarieties $V \subset X$.

\begin{definition}
\label{d:extended-measure}
We define the \emph{extended motivic ring} to be 
$\~\MM_X := \prod_{V \subset X} \MM_V$
where the union is taken over all closed 
subvarieties $V \subset X$.
Let $\~\S(X^\beth)$ be the collection of all sets $T \subset X^\beth$
such that $T \cap V^{\val} \in \S(V^{\val})$ for every closed subvariety $V \subset X$. We define
the \emph{extended motivic  measure} $\~\m_X$ on $X^\beth$ by setting
$\~\m_X(T) := \big(\m_V(T \cap V^{\val})\big)_{V \subset X}$
for every $T \in \~\S(X)$. 
\end{definition}

Entries $\m_V(T \cap V^{\val})$, for $V \subsetneq X$, 
can be thought as the `infinitesimal' contributions to the measure, 
with the partial order given by inclusion yielding a partial order between 
these `infinitesimal' quantities.
There is no natural way, however, of extending 
nontrivially the measure from $X^\beth$ to the whole Berkovich space $X^\an$.

\subsection{Proof of \cref{t:existence-measure}}
\label{ss:proof}

We start with the following lemma. 

\begin{lemma}
\label{l:existence-measure-one-blowup}
Let $\a \colon (X_{\p'},D') \to (X_{\p},D)$ be a smooth transversal blow-up. 
Let $C \subset X_{\p}$ be the center of blow-up, and write $D' = \sum_{i=0}^rD_i'$
where $D_0'$ is the exceptional divisor of $\a$ and $D_i' = f^{-1}_*D_i$ for $i > 0$. 
Then for every $S \subset \Sk_\p$ we have $S \in \S(\Sk_\p)$ if and only if
$r_{\p'\p}^{-1}(S) \in \S(\Sk_{\p'})$, and the two measures $\m_\p$ and $\m_{\p'}$
agree on $r_\p^{-1}(S)$.
\end{lemma}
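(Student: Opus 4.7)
The proof is an explicit local computation. Choose analytic coordinates $z_1,\dots,z_n$ on $X_\p$ in which $D = \sum_{i \le s}\{z_i = 0\}$ and $C = \{z_{j_1} = \cdots = z_{j_c} = 0\}$; set $L := \{j_k : j_k \le s\}$ and $\ell := |L|$, so that $\ord_{D_0'}$ is the monomial valuation giving weight $1$ to each $z_{j_k}$. Writing $a_i := \^A_X(\val_{D_i})$, the chain rule $\Jac_{\p'} = \Jac_\p \cdot \Jac_\a$ together with $\ord_{D_0'}(\Jac_\a) = c - 1$ and the monomial identity $\ord_{D_0'}(\Jac_\p) = \sum_{i \in L}(a_i - 1)$ gives the key formula
\[
a_0 := \^A_X(\val_{D_0'}) = \sum_{i \in L} a_i + (c - \ell).
\]

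Next I would describe the retraction $r_{\p'\p} : \Sk_{\p'} \to \Sk_\p$ face-by-face. A face $\Sk_{\p',I'}$ maps linearly onto a face $\Sk_{\p,I}$: if $0 \notin I'$ then $I = I'$ and the map is the identity of valuations; if $0 \in I'$ then $I = L \cup (I'\setminus\{0\})$, and writing $(y_k)_{k\in I'}$ for the source coordinates and $(x_i)_{i\in I}$ for the target,
\[
x_i = \begin{cases} y_0 & \text{if } i \in L \setminus I',\\ y_0 + y_i & \text{if } i \in L \cap (I'\setminus\{0\}),\\ y_i & \text{if } i \in (I'\setminus\{0\}) \setminus L. \end{cases}
\]
Since each such map is piecewise affine, $r_{\p'\p}^{-1}$ preserves measurability in both directions, establishing the first assertion of the lemma.

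For the measure equality, fix $I \subseteq \{1,\dots,r\}$ and a measurable $S \subseteq \Sk_{\p,I}$. When $L \not\subseteq I$, only the identity face $I'=I$ contributes nontrivially, and $[D'^\o_I]_X = [D^\o_I]_X$ because $\a$ is an isomorphism over $D^\o_I$ (as $C \cap D^\o_I = \emptyset$ in this case). When $L \subseteq I$, the contributing top-dimensional faces are the identity face $I' = I$, the $\ell$ matching-dimensional faces $I'_i := (I\setminus\{i\})\cup\{0\}$ for $i \in L$ (on which the retraction has Jacobian $1$), and the extra-dimensional face $I' = I\cup\{0\}$. Performing the change of variables using the formula for $a_0$, integrating out the extra ray variable $y_0 \in [0,\min_{j\in L}x_j]$ on the last face, and summing against the motivic weights reduces the required equality to the two motivic identities
\[
[D^\o_I]_X = [D'^\o_I]_X + \tfrac{[D'^\o_{I\cup\{0\}}]_X}{c-\ell}, \qquad [D'^\o_{I'_i}]_X = \tfrac{[D'^\o_{I\cup\{0\}}]_X}{c-\ell} \;\text{ for each } i \in L.
\]

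Both identities follow from the $\P^{c-1}$-bundle structure of $D_0' = \P(N_{C/X_\p})$ over $C$: the intersection $D_0' \cap \bigcap_{i\in L}D_i' = \P(N_{C/D_L})$ is a $\P^{c-\ell-1}$-sub-bundle, whereas $D_0' \cap D'_{I'_i}$ is, after restriction to the appropriate base in $D^\o_{I\setminus L} \cap C$, an $\A^{c-\ell}$-bundle (the complement of a hyperplane sub-bundle inside a rank-$(c-\ell)$ sub-bundle). Applying $[\P^m]_X = m + 1$ and $[\A^m]_X = 1$ in $\MM_X$ yields both identities, and the identity case $[D'^\o_I]_X$ is computed from the fact that $\a$ realizes $D'^\o_I$ as $D^\o_I \setminus C$ away from the exceptional locus. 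The main obstacle is this precise matching of motivic bundle classes against the denominators $c - \ell$ arising from $\int_0^\infty e^{-(c-\ell)y_0} dy_0$; the cancellation works essentially because we are working modulo $\L - 1$, which turns $[\P^m]$ into the numerical factor $m + 1$.
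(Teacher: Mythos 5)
Your proof is essentially the paper's own argument, organized a bit differently. Both reduce to a face $\Sk_{\p,I}$, use the monomial chain rule $\Jac_{\p'} = \Jac_\p\cdot\Jac_\a$ to get $a_0 = \sum_{i\in L}a_i + (c-\ell)$ (the paper's $t$ is your $\ell$), observe that the $r_{\p'\p}$-preimage of $\Sk_{\p,I}$ decomposes into the same collection of faces, discard the lower-dimensional faces because their strata classes acquire a $\mathbb{G}_m$ factor and hence vanish mod $\L-1$, and finish by fibering the extra-dimensional face $\~I$ over $y_0\in(0,\min_{i\in L}x_i)$ while matching boundary contributions with the $\ell$ same-dimensional faces. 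Your two motivic identities are exactly the paper's strata computations $[D'^\o_I]=[D_I^\o\setminus C_I^\o]$, $[D'^\o_{\~I}]=(c-t)[C_I^\o]$, $[D'^\o_{\~I\setminus\{j\}}]=[C_I^\o]$, rearranged. Two small points of hygiene: the correct dichotomy is $C_I^\o=\emptyset$ versus $C_I^\o\ne\emptyset$ rather than $L\not\subseteq I$ versus $L\subseteq I$ (the implication only goes one way, though the missing case is trivially fine); and the base of the $\A^{c-\ell}$-bundle is $C_I^\o = C\cap D_I^\o$, not $D^\o_{I\setminus L}\cap C$, which is actually empty when $L\ne\emptyset$.
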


\begin{proof}
Throughout the proof, we abuse notation and write $\m_\p(S)$ instead of $\m_\p(r_\p^{-1}(S)).$
By additivity, it suffices to consider the case where $S \subset \Sk_{\p,I}$ for some $I\subset \{1,\dots,r\}$. 
After reindexing, we can assume that $I = \{1,\dots,s\}$ where $s = |I|$.
For every subset $J\subset I$, we associate the set $\tilde{J}:=\{0\}\cup J\subset \{0,1,\ldots, s\}$.
Finally, let $c = \codim(C,X_{\p})$.

Let $C_I^\o := C \cap D_I^\o$. 
If $C_I^\o = \emptyset$, then $r_{\p'\p}^{-1}(\Sk_{\p,I}) = \Sk_{\p',I}$, the map
$\Sk_{\p',I} \to \Sk_{\p,I}$ is an isomorphism preserving the measure given by the isomorphisms $\ff_{\p,I}$ and $\ff_{\p',I}$ and $\alpha$ identifies $D'^\o_I$ with $D^o_I$.
Thus, the result follows. 

Assume then that $C_I^\o \ne\emptyset$.
After reindexing $I$, we can assume that there is an integer $t \in \{0,\dots,s\}$,
such that $C \subset D_i$ if and only if $1 \le i \le t$.
Note that $\codim(C_I^\o,D_I^\o) = c - t$, 
and $t = 0$ if and only if $C \not\subset \Supp(D)$. 
Recall that we write $I' \succeq I$ if $\alpha(D'^\o_{I'})\subset D_I^\o$.
We have $I' \succeq I$ if and only if either $I' = I$ or
$I' =\~J$ where $J \subset I$ is a subset containing $\{t+1,\dots,s\}$. 

\begin{enumerate}
\item\label{stra1}
First, observe that $\alpha$ restricts to an isomorphism $D'^\o_I\cong D_I^\o \setminus C_I^\o$. Thus,
\[
[D'^\o_I] = [D_I^\o \setminus C_I^\o]
\]
in $K_0(\Var_X)/(\L_X - 1)$.
\end{enumerate}

Let $F$ be a fiber of $D_0' \to C$ over any closed point of $C_I^\o$. Note that
$F \cong \P^{c-1}_k$. 
The divisors $D'_i$, for $t<i\leq s$, contain $F$.
On the other hand, the divisor $\sum_{i=1}^t D'_i$ restricts to a simple normal crossing divisor $E$ on $F$, 
and writing $E = \sum_{i=1}^t E_i$ where $E_i = D_i'|_F$, 
the pair $(F,E)$ has a unique minimal stratum $E_I \cong \P^{c-1-t}_k$.
For every subset $J\subset I$ that contains $\{t+1,\dots,s\}$, the map $D'^\o_{\~J} \to C_I^\o$ is a locally trivial fibration with fiber $E_J^\o$.
For these $J$'s, we have:
\begin{enumerate}
\setcounter{enumi}{1}
\item\label{stra2}
If $|J| = s$ (i.e., $J = I$), then $E_J^\o\cong\P^{c-t-1}_k$, hence
\[
[D'^\o_{\~J}] = [\P^{c-t-1}_k\times_k C_I^\o]=[\P^{c-t-1}_X\times_X C_I^\o]=(c-t)[C_I^\o]
\]
in $K_0(\Var_X)/(\L_X - 1)$. In the last equality we used the fact that $\L_X=1$.
\item\label{stra3}
If $|J| = s-1$, then $E^\o_J\cong \A^{c-t}_k$, hence
\[
[D'^\o_{\~J}] = [\A^{c-t}_k\times_k C_I^\o]=[C_I^\o]
\]
in $K_0(\Var_X)/(\L_X - 1)$. 
\item\label{stra4}
If $|J| \le s-2$, then $E^\o_J\cong \mathbb{G}_{m,k}^{s-1-|J|}\times_k \A^{c-t}_k$, hence
\[
[D'^\o_{\~J}] = [\mathbb{G}_{m,k}^{s-1-|J|}\times_k \A^{c-t}_k\times_k C_I^\o]=0
\]
in $K_0(\Var_X)/(\L_X - 1)$.
\end{enumerate}

Denoting $S' = r_{\p'\p}^{-1}(S)$
and setting $S'_{I'} := S' \cap \Sk_{\p',I'}$ for every $I'$, we have the decomposition
$S' = \bigsqcup_{I' \succeq I} S'_{I'}.$
The computation below will show that $S$ is a measurable subset
of $\Sk_{\p,I}$ if and only if $S'_{I'}$ is a measurable subset of $\Sk_{\p',I'}$ for all $I' \succeq I$. 

Recall that, by definition, 
$\m_{\p'}(S'_{I'}) = [D'^\o_{I'}] \,\n_{\p,I}(S'_{I'})$ for every $I'$. 
It follows from \eqref{stra4} that $\m_{\p'}(S'_{\~J}) = 0$ for all $J \subsetneq I$ with $|J| \le s-2$, 
hence we have
\[
\m_{\p'}(S') = \m_{\p'}(S_I') + \m_{\p'}(S_{\~I}') + \sum_{j=1}^t \m_{\p'}(S_{\~I \setminus \{j\}}').
\]
We now analyze the terms in the right-hand-side of this equation.
Regarding the first term, we have $\ff_{\p',I}(S'_I) = \ff_{\p,I}(S)$ in $\R^s_{>0}$, 
and hence
\[
\m_{\p'}(S_I') = [D_I'^\o] \int_{S'_I}  d\n_{\p',I} = [D_I^\o \setminus C_I^\o] \int_S d\n_{\p,I}
= [D_I^\o \setminus C_I^\o] \,\m_{\p}(S_I).
\]

The analysis of the other terms require a closer look at the integrals and 
the linear maps
$\lambda_{\~J} \colon \R^{|\~J|}_{>0} \to \R^{|I|}_{>0}$
defined by the retraction $r_{\p'\p}$ via the isomorphisms
$\ff_{\p,I}\colon\Sk_{\p,I} \to \R_{>0}^{|I|}$ and
$\ff_{\p',\~J} \colon \Sk_{\p',\~J} \to \R_{>0}^{|\~J|}$ 
(that is, $\lambda_{\~J}:=\ff_{\p,I}\circ r_{\p'\p}\circ \ff_{\p',\~J}^{-1}$).
First observe that $\alpha^*D_i = D_i'+D_0'$ for $1\leq i\leq t$ and 
$\alpha^*D_i = D_i'$ for $t+1< i\leq s$. 
Taking $J = I$, we see from these equations that the map 
$\lambda_{\~I}\colon \R^{s+1}_{>0}\to \R^s_{>0}$ is given by
\begin{equation}
\label{eq:lambda}
\begin{cases}
y_i=x_i+x_0 & \text{for $1\leq i\leq t$},  \\
y_i=x_i & \text{for $t+1<i\leq s$}. 
\end{cases}
\end{equation}
Here the coordinates $y_i$ and $x_j$ are the coordinates of the target and domain respectively, indexed by the corresponding indexing sets, $I$ and $\~I$.
When $J=I\setminus\{j\}$ for $1 \le j \le t$, then the corresponding map
$\lambda_{\~I\setminus\{j\}}\colon \R^s_{>0}\to \R^s_{>0}$ is 
given by the same equations in \eqref{eq:lambda} except for $y_j$ which is set to $0$
(note that the variable $x_j$ is now omitted).

For every $0 \le i \le s$, we set $a_i := \^A_X(\val_{D'_i})$.
It follows from the equations for $\a^*D_i$ 
and the fact that $\val_{D'_0}(\Jac_{\alpha})=c-1$ and $\val_{D'_i}(\Jac_{\alpha})=0$ for $1\leq i\leq s$
that $a_i = \^A_X(\val_{D_i})$ for all $1 \le i \le s $ and $a_0 = c - t + \sum_{j=1}^t a_j$.

\begin{case}
\label{case:t=0}
Assume that $C \not\subset \Supp(D)$, i.e., $t = 0$. 
\end{case}

We start with the case $J = I$. 
We have $\^A_X(x_0,\dots,x_s) = \sum_{i=0}^s a_ix_i$ and
$\^A_X(y_1,\dots,y_s) = \sum_{i=1}^s a_iy_i$ under the isomorphisms $\ff_{\p',\~I}$
and $\ff_{\p,I}$, and hence 
\begin{align*}
\m_{\p'}(S_{\~I}') 
&= [D'^\o_{\~I}] \int_{\ff_{\p',\~I}(S_{\~I}')} e^{- \sum_{i=0}^s a_ix_i} \,dx_0\,dx_1\,\dots\,dx_s \\
&= c\,[C_I^\o] \int_{\ff_{\p,I}(S)} \Big( \int_0^\infty e^{- cx_0 - \sum_{i=1}^s a_iy_i} \,dx_0 \Big) dy_1\,\dots\,dy_s \\
&= [C_I^\o] \int_{\ff_{\p,I}(S)} e^{- \sum_{i=1}^s a_iy_i} \,dy_1\,\dots\,dy_s \\
&= [C_I^\o] \, \n_{\p,I}(S).
\end{align*}
Combining with the computation done for the first term, this gives the required identity
\[
\m_{\p'}(S') = ([D_I^\o \setminus C_I^\o] + [C_I^\o]) \int_S  d\n_{\p,I} 
= \m_\p(S).
\]

\begin{case}\label{case:t-neq-0}
Assume that $C \subset \Supp(D)$, i.e., $t \ne 0$. 
\end{case}

We start by looking at the term with $J = I$. 
For every $i = 1,\dots,t$, we let
\[
\ff_{\p,I}(S)_i := \ff_{\p,I}(S) \cap \{(y_1,\dots,y_s) \mid y_i = \min\{y_1,\dots,y_t\} \}.
\]
Note that $\ff_{\p,I}(S) = \bigcup_{i=1}^t\ff_{\p,I}(S)_i$ 
and the union is a disjoint union away from a set of measure zero.
By \eqref{eq:lambda}, we may apply the change of variables $x_i = y_i - x_0$ for $1\leq i\leq t$
and $x_i = y_i$ for $t+1\leq i\leq s$, and compute
\begin{align*}
\m_{\p'}(S_{\~I}') 
&= [D'^\o_{\~I}] \int_{\ff_{\p',\~I}(S_{\~I}')} 
e^{- \sum_{i=0}^s a_ix_i} \,dx_0\,dx_1\,\dots\,dx_s \\
&= (c-t)[C_I^\o] \int_{\ff_{\p',\~I}(S_{\~I}')}  
e^{- (c-t)x_0 - \sum_{i=1}^s a_iy_i} \,dx_0\,dy_1\,\dots\,dy_s \\
&= (c-t)[C_I^\o] \sum_{j=1}^t \int_{\ff_{\p,I}(S)_j} 
\Big( \int_0^{y_j} e^{- (c-t)x_0 - \sum_{i=1}^s a_iy_i} \,dx_0 \Big) dy_1\,\dots\,dy_s \\
&= [C_I^\o] \Big(\int_{\ff_{\p,I}(S)} e^{ - \sum_{i=1}^s a_iy_i}dy_1\,\dots\,dy_s  
- \sum_{j=1}^t \int_{\ff_{\p,I}(S)_j} e^{- (c-t)y_j - \sum_{i=1}^s a_iy_i} \, dy_1\,\dots\,dy_s \Big) \\
&= [C_I^\o] \Big( \n_{\p,I}(S)
- \sum_{j=1}^t \int_{\ff_{\p,I}(S)_j} e^{- (c-t)y_j - \sum_{i=1}^s a_iy_i} \, dy_1\,\dots\,dy_s \Big). 
\end{align*}
We look now at the terms where $J = I \setminus \{j\}$ for some $1 \le j \le t$.
Note that, modulo a set of measure zero,
$\lambda_{\~I\setminus \{j\}}$ sends $\ff_{\p',\~I\setminus\{j\}}(S_{\~I\setminus\{j\}}')$ to $\ff_{\p,I}(S)_j$ bijectively.
Using the change of variable $x_0=y_j$, $x_i=y_i-y_j$ for $1\leq i\leq t$, $i \neq j$, 
and $x_i=y_i$ for $t<i\leq s$ as given in \eqref{eq:lambda}, we compute
\begin{align*}
\m_{\p'}(S_{\~I \setminus \{j\}}')
&=  [D'^\o_{\~I \setminus \{j\}}]\int_{\ff_{\p',\~I\setminus\{j\}}(S_{\~I\setminus\{j\}}')}  e^{- a_0x_0 - \sum_{i=1}^{j-1} a_ix_i-\sum_{i=j+1}^{s} a_ix_i} \, dx_0dx_1\,\dots \widehat{dx_j}\dots\,dx_s\\
&=[C_I^\o] \int_{\ff_{\p,I}(S)_j} e^{- (c-t)y_j - \sum_{i=1}^s a_iy_i} \, dy_1\,\dots\,dy_s.
\end{align*}
This completes the proof of the lemma.
\end{proof}

\begin{proof}[Proof of \cref{t:existence-measure}]
We need to show that, given two local snc models 
$\p  \colon (X_{\p},D) \to X$ and $\p' \colon (X_{\p'},D') \to X$, 
the measures $\m_\p$ and $\m_{\p'}$ agree on $\S_\p(X^{\val}) \cap \S_{\p'}(X^{\val})$. 
Let $T \in \S_\p(X^{\val}) \cap \S_{\p'}(X^{\val})$ be any element, and write 
$T = r_\p^{-1}(S) = r_{\p'}^{-1}(S')$
where $S \in \S(\Sk_\p)$ and $S' \in \S(\Sk_{\p'})$. 

The singular loci of $X_\p$ and $X_{\p'}$ are disjoint from the 
supports of the divisors $D$ and $D'$, and we can resolve singularities
without touching the divisors, and hence without changing the sets $S$ and $S'$. 
Therefore, we can assume without loss of generality that 
both models are smooth.

The composition $\p^{-1} \o \p' \colon X_{\p'} \rat X_{\p}$ is a birational map. 
Let $U \subset X_{\p} \setminus \Supp(D)$ and $U' \subset X_{\p'} \setminus \Supp(D')$ 
be nonempty open sets such that this map restricts to an isomorphism $U' \cong U$. 
Using existence of resolutions of marked ideals \cite{Wlo05}, 
where we mark the defining ideals of the complements $X_{\p} \setminus U$ and $X_{\p'} \setminus U'$, 
we can replace the models $(X_{\p},D)$ and $(X_{\p'},D')$ with higher models
so that $\Supp(D) = X_{\p} \setminus U$ and $\Supp(D') = X_{\p'} \setminus U'$. 
This reduction step is done by only taking of smooth transversal blow-ups.
By applying again resolution of singularities, this time to the indeterminacy subscheme
of $\p^{-1} \o \p'$, we can furthermore assume that $\p^{-1} \o \p'$
is a morphism, so that $(X_{\p'},D') \ge (X_{\p},D)$. 
After we replace $S$ and $S'$ with the respective inverse images under the induced
retraction maps corresponding to these sequences of blow-ups, we still have 
$T = r_\p^{-1}(S) = r_{\p'}^{-1}(S')$, 
and \cref{l:existence-measure-one-blowup} ensures that the corresponding measures 
$\m_\p(T)$ and $\m_{\p'}(T)$ have not changed. 

The next step is to apply the weak factorization theorem \cite{AKMW02,Wlo03} to the birational
morphism $\p^{-1} \o \p' \colon X_{\p'} \to X_{\p}$, keeping track of the open set $U$.
Using the formulation given in \cite[Theorem~0.3.1]{AKMW02}, we
obtain a decomposition of $\p^{-1} \o \p'$ into a chain of birational maps
\[
X_{\p'} = X_{\p_0} \xdashrightarrow{\text{$\ff_1$}} X_{\p_1} \xdashrightarrow{\text{$\ff_2$}} \dots
\xdashrightarrow{\text{$\ff_{n-1}$}} X_{\p_{n-1}} \xdashrightarrow{\text{$\ff_n$}} X_{\p_n} = X_{\p}
\]
where either $\ff_i$ or $\ff_i^{-1}$ is a blow-up of a smooth center that is disjoint with $U$
and has transversal intersections with its complement, and all models $X_{\p_i}$ dominate $X_\p$ via a well-defined morphism $X_{\p_i} \to X_{\p}$. 
Denoting $S_i = r_{\p_i,\p}^{-1}(S)$, we have 
$S_{i-1} = r_{\p_{i-1},\p_i}^{-1}(S_i)$ if $\ff_i$ is a morphism, 
and $S_i = r_{\p_{i},\p_{i-1}}^{-1}(S_{i-1})$ if $\ff_i^{-1}$ is a morphism. 
Going through these blow-ups and blow-downs, and applying \cref{l:existence-measure-one-blowup}
at each step, we conclude that $\m_{\p}(T) = \m_{\p'}(T)$. 
\end{proof}

\section{Motivic integration}
\label{s:functions}

As before, let $X$ be a variety over an algebraically closed field $k$ of characteristic zero.
In this section, we define motivic integration on $X^{\val}$.

\subsection{Motivic functions}
\label{ss:MC-functions}

Given a local snc model $\p \colon (X_{\p},D) \to X$, 
we denote by $\Func(\Sk_\p,\R)$ the ring of real valued functions on $\Sk_\p$, 
and let $\Func_0(\Sk_\p,\R) \subset \Func(\Sk_\p,\R)$ be the ideal
of functions whose restriction to every potential face $R$ of $\Sk_\p$
vanishes almost-everywhere with respect to the corresponding measure $\n_R$.

\begin{definition}
We call 
\[
\FF_\p := \Func(\Sk_\p,\R)/\Func_0(\Sk_\p,\R)
\]
the \emph{ring of functions of level $\p$}. 
We think of elements of $\FF_\p$ as almost-everywhere defined functions
(in the sense explained above), 
and simply refer to them as {functions}. 
We let $\FF_\p^\o \subset \FF_\p$ be the subring generated by 
the characteristic functions $1_{\Sk_{\p,I}}$ of the faces $\Sk_{\p,I}$ of $\Sk_{\p}$.
\end{definition}

Consider the motivic ring
$\MM_{X_\p} = \big(K_0(\Var_{X_\p})/(\L_{X_\p} - 1)\big)\otimes_\Z\R$.
For short, we will denote the element in this ring defined by a scheme $V$ over $X_\p$ by
the symbol $[V]_{\p}$, instead of $[V]_{X_\p}$; if $\p$ is the identity function $X \to X$ then
we stick with the notation $[V]_X$.

For every model $\p \colon (X_{\p},D) \to X$, there is an injective ring homomorphism
$\FF_\p^\o \inj \MM(X_{\p})$ defined by mapping $1_{\Sk_{\p,I}} \mapsto [D_I^\o]_{\p}$. 
We regard $\MM(X_\p)$ as an $\FF_\p^\o$-module via this map. 

\begin{definition}
We call 
\[
\MF_\p := \MM(X_{\p}) \otimes_{\FF_\p^\o} \FF_\p.
\]
the \emph{ring of motivic functions of level $\p$}.
\end{definition}

Let $\p \colon (X_{\p'},D') \to X$ be another local snc model dominating $(X_{\p},D)$ via a 
morphism $\a \colon X_{\p'} \to X_{\p}$. Recall that we have
retraction maps
$\ov r_{\p'\p} \colon \Sk_{\p'} \to \Sk_\p$ and $r_{\p'\p} \colon \Sk_{\p'} \to \Sk_{\p}$.
Composition with $r_{\p'\p}$ gives rise to an injective map $\FF_\p \inj \FF_{\p'}$, 
and the latter restricts to an injection $\FF_\p^\o \inj \FF_{\p'}^\o$ since
the inverse image of a face of $\Sk_\p$ is a union of faces of $\Sk_{\p'}$.

\begin{definition}
Set $\FF^\o(X^{\val}) := \dirlim_\p \FF_\p^\o$ and
$\FF(X^{\val}) := \dirlim_\p \FF_\p$. 
We think of elements of these rings as almost-everywhere defined functions on $X^{\val}$
and simply refer to them as \emph{functions}. 
\end{definition}

We have an inclusion of rings $\FF^\o(X^{\val}) \inj \FF(X^{\val})$.
For every $\p$, there is a natural injection $\FF_\p \inj \FF(X^{\val})$ whose image
consists of functions of the form $\f = \f_\p \o r_\p$ where $\f_\p \in \FF_\p$. 

\begin{example}
Given a nonzero ideal sheaf $\fa \subset \O_X$ and a real number $s \in \R$,
we consider the function $X^{\val} \to \R$ given by $x \mapsto e^{-s\,v_x(\fa)}$ 
(or, equivalently, $x \mapsto |\fa|_x^s$). We denote this function by $e^{-s\ord(\fa)}$
or $|\fa|^s$. 
We have $e^{-s\ord(\fa)} = e^{-s\ord(\fa)} \o r_\p$ for any local snc model $\p \colon (X_{\p},D) \to X$
such that $\fa\.\O_Y$ is a locally principal ideal sheaf cosupported on $D$, 
thus the function defines an element in $\FF_\p$ and hence in $\FF(X^{\val})$.
If $s > 0$, then the definition extends to the case $\fa = (0)$, giving a
function that is identically zero. 
\end{example}

At the level of motivic rings, for any morphism $\a \colon (X_{\p'},D') \to (X_{\p},D)$,
of local snc models over $X$ we have a pull-back ring homomorphism
$\a^* \colon \MM_{X_\p} \to \MM_{X_{\p'}}$ mapping $[V]_{\p} \mapsto [V \times_{X_{\p}} X_{\p'}]_{\p'}$
for every $X_{\p}$-scheme $V$. Note that
$\a^*([D_I^\o]_{\p}) = \sum_{I' \succeq I} [D'^\o_{I'}]_{\p'}$, 
which is compatible with the formula
$1_{\Sk_{\p,I}} \o r_{\p'\p} = \sum_{I' \succeq I} 1_{\Sk_{\p',I'}}$
It follows that the diagram
\[
\xymatrix{
\MM(X_{\p}) \ar[d]_{\a^*} & \FF_\p^\o \ar[l] \ar[d] \ar[r] & \FF_\p \ar[d] \\
\MM(X_{\p'}) & \FF_{\p'}^\o \ar[l] \ar[r] & \FF_{\p'}
}
\]
commutes.
This yields a natural pull-back ring homomorphism
\[
\a^* \colon \MF_\p \to \MF_{\p'}.
\]

\begin{definition}
The \emph{ring of motivic functions}
is the ring
\[
\MF(X^{\val}) := \limdir_\p \MF_\p,
\]
where the direct limit is taken over all (local) snc models $\p \colon (X_{\p},D) \to X$
using the pull-back map defined above.  
An element of $\MF(X^{\val})$ is called a \emph{motivic function}.
A motivic function $f$ is said to be \emph{determined at level $\p$}
if it is in the image of the natural map $\MF_\p \to \MF(X^{\val})$. 
Any element $f_\p \in \MF_\p$ mapping to $f$ is called a \emph{representative} of $f$.
\end{definition}

\begin{remark}
\label{r:bourbaki}
Since direct limit commutes with tensor product \cite[Proposition~7 at page~290]{Bou89}, there is a natural isomorphism
\[
\MF(X^{\val}) \cong  \big(\limdir_\p \MM_{X_\p} \big) \otimes_{\FF^\o(X^{\val})} \FF(X^{\val}).
\]
Note also that if $X = \Spec k$ then $\MF(X^{\val}) = \MM_k$. 
\end{remark}

\begin{remark}
If $f_\p = [V]_{\p} \otimes \f_\p$ where $V$ is a scheme over $X_{\p}$ and $\f_\p \in \FF_\p$, then 
$f_{\p'} = [V\times_YX_{\p'}]_{\p'} \otimes (\f_\p \o r_{\p'\p})$ for every $\p' \ge \p$. 
If $V$ is a scheme over $X_{\p}$ whose image in $X_{\p}$ is supported on a stratum $D^\o_I$ of $D$, 
then for every $\f_\p \in \FF_\p$ we have
$[V]_{\p} \otimes \f_\p 
= ([V]_{\p}\.[D_I^\o]_\p) \otimes \f_\p = [V]_{\p} \otimes (\f_\p\. 1_{\Sk_{\p,I}})$ in $\MF_\p$. 
Similarly, if $V$ is any scheme over $X_{\p}$ and $\f_\p$ is
supported on $\Sk_{\p,I}$, then $[V]_{\p} \otimes \f_\p = [V \times_{X_\p}D_I^\o]_{\p} \otimes \f_\p$. 
\end{remark}

\subsection{Integrability}
\label{ss:integrable-functions}

We start by defining integrability for functions of finite levels. 

\begin{definition}
Let $\p \colon (X_\p,D) \to X$ be a local snc model.
A function $\f \in \FF_\p$ is \emph{measurable} if for every potential face $R \subset \Sk_\p$, 
the restriction of $\f$ to $R$ is measurable with respect to the measure $\n_R$. 
A measurable function $\f \in \FF_\p$ is \emph{integrable} if furthermore
$\int_R |\f| \,d\n_R < \infty$ for all potential faces $R \subset \Sk_\p$.
We denote by $\IF_\p \subset \FF_\p$ the subspace of integrable functions. 
\end{definition}

\begin{example}
Any continuous function on $\Sk_\p$ or, more generally, 
any function that is continuous on each face $\Sk_{\p,I}$
of $\Sk_\p$ is integrable. 
\end{example}

Given local snc models $\p' \ge \p$, we have $\IF_\p \inj \IF_{\p'}$
via the map $\FF_\p \inj \FF_{\p'}$.
This follows by a similar computation as 
in the proof of \cref{t:existence-measure}, by
adding at each step of the computation the function $\f$ as a factor in the integrals.
We can therefore define
\[
\IF(X^{\val}) := \dirlim_{\p} \IF_\p,
\]
We denote by $\IF_\p(X^{\val})$ the image of $\IF_\p \inj \IF(X^{\val})$.

\begin{definition}
A motivic function $f \in \MF(X^{\val})$ is \emph{integrable} if it belongs to 
\[
\IMF(X^{\val}) := \dirlim_\p \big( \MM_{X_\p} \otimes_{\FF_\p^\o} \IF_\p \big).
\]
We call $\IMF(X^{\val})$ the module of \emph{integrable motivic functions}
on $X^{\val}$. The module of \emph{measurable motivic functions}
on $X^{\val}$ can be defined in a similar fashion.
\end{definition}

\begin{remark}
\label{r:f-integrable-f_pi}
There is a natural isomorphism
\[
\IMF(X^{\val}) \cong \big(\dirlim_\p \MM_{X_\p}\big) \otimes_{\FF^\o(X^{\val})} \IF(X^{\val}).
\]
This shows that $\IMF(X^{\val})$ has a natural structure of module over $\FF^\o(X^{\val})$. 
\end{remark}

\begin{definition}
If $f \in\IMF(X^{\val})$ is represented by an element $f_\p = \sum_j [V_j]_\p \otimes \f_j$ 
where $[V_j] \in \MM_{X_\p}$ and $\f_j \in \IF_\p$, 
then the \emph{integral} of $f$ over $X^{\val}$ is the element in $\MM_X$ given by
\[
\int_{X^{\val}} f\,d\m_X := 
\sum_j \sum_I [V_j\times_{X_\p} D_I^\o]_X \int_{\Sk_{\p,I}} \f_j\, d\n_{\p,I}.
\]
\end{definition}

\begin{theorem}
\label{t:integral-well-defined}
The integral is well-defined.
\end{theorem}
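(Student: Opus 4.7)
The plan is to establish two things: first, that the formula for $\int_{X^{\val}} f \, d\m_X$ does not depend on the choice of representative of $f$ at a fixed level $\p$, that is, it respects the defining relations of $\MF_\p = \MM_{X_\p} \otimes_{\FF_\p^\o} \IF_\p$; and second, that the value is invariant under passing between representatives at different levels.

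For the first point, I would first check $\R$-linearity in each factor of the tensor product, which follows immediately from the additivity of the integral and of motivic classes. The genuinely new relation to verify is the $\FF_\p^\o$-bilinearity. Since $\FF_\p^\o$ is generated by the characteristic functions $1_{\Sk_{\p,I}}$, which correspond under the embedding $\FF_\p^\o \inj \MM_{X_\p}$ to $[D_I^\o]_\p$, it suffices to check
\[
\int_{X^{\val}}[V]_\p \otimes (1_{\Sk_{\p,I}} \. \f)\, d\m_X
= \int_{X^{\val}}[V \times_{X_\p} D_I^\o]_\p \otimes \f \, d\m_X.
\]
On the left, only the summand $J = I$ in the defining sum survives since $1_{\Sk_{\p,I}}$ vanishes on every other face. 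On the right, using that $D_I^\o \cap D_J^\o = \emptyset$ unless $J = I$ (as the strata $D_J^\o$ partition $X_\p$), only the summand $J = I$ survives as well, and both sides reduce to $[V \times_{X_\p} D_I^\o]_X \int_{\Sk_{\p,I}} \f \, d\n_{\p,I}$.

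For the second point, by the construction of $\IMF(X^{\val})$ as a direct limit, two representatives of $f$ become identified after pulling back to a common higher level, so it suffices to check that for any morphism $\a \colon (X_{\p'},D') \to (X_\p,D)$ of local snc models, the integral of $f_\p$ agrees with the integral of $\a^* f_\p$. Using the weak factorization theorem exactly as in the proof of \cref{t:existence-measure}, we may (after reducing to snc models and to pure blow-ups via resolution of the indeterminacy) further reduce to the case where $\a$ is a single smooth transversal blow-up. By $\R$-linearity we may assume $f_\p = [V]_\p \otimes \f$, so that $\a^* f_\p = [V \times_{X_\p} X_{\p'}]_{\p'} \otimes (\f \o r_{\p'\p})$, and the required identity reads
\[
\sum_I [V \times_{X_\p} D_I^\o]_X \int_{\Sk_{\p,I}} \f \, d\n_{\p,I}
= \sum_{I'} [V \times_{X_\p} D'^\o_{I'}]_X \int_{\Sk_{\p',I'}} \f \o r_{\p'\p} \, d\n_{\p',I'}.
\]
This is precisely the identity established in \cref{l:existence-measure-one-blowup}, except with the function $\f$ inserted into the integrands. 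Because $\f \o r_{\p'\p}$ depends only on the image in $\Sk_\p$ and is therefore constant along the fibers of the piecewise-linear map $\lambda_{\widetilde J}$ given by \eqref{eq:lambda}, the same Fubini-based change-of-variables computation as in that lemma carries through: $\f$ factors out of the inner integral in the new variable $x_0$, and the remainder of the calculation is identical term by term.

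The main obstacle is the verification in the second paragraph, which requires revisiting the calculation of \cref{l:existence-measure-one-blowup} with the extra integrand $\f$. However, since the paper has already remarked that $\IF_\p \inj \IF_{\p'}$ is established by exactly this modification of the argument, the work is bookkeeping rather than a new idea: once one observes that $\f \o r_{\p'\p}$ behaves as a constant in the variables being integrated out, the entire case analysis (\cref{case:t=0} with $C \not\subset \Supp(D)$, and \cref{case:t-neq-0} with $C \subset \Supp(D)$, together with the geometric identities \eqref{stra1}--\eqref{stra4}) transfers verbatim, and the motivic coefficients $[V \times_{X_\p} D_I^\o]_X$ assemble correctly because fiber products commute with the stratification of $D$ by $D_I^\o$.
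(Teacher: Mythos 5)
Your proposal is correct and follows essentially the same route as the paper's proof: reduce via resolution and weak factorization to the case of a single smooth transversal blow-up, then redo the computation of \cref{l:existence-measure-one-blowup} with the integrand $\f$ inserted, using that $\f_{\p'} = \f_\p \circ r_{\p'\p}$ is constant in the blow-up direction $x_0$ after the change of variables \eqref{eq:lambda}. Your first paragraph is somewhat more explicit than the paper (which simply calls independence from the tensor-product representation at a fixed level ``clear''), but the content is the same; the key observation you single out — that $\f$ factors out of the inner integral — is precisely what makes the term-by-term transfer work.
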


\begin{proof}
The definition is clearly independent from the way the element $f_\p$ 
is written in the tensor product.
The independence from the choice of the model $\p$
can be verified following the argument in the proof of \cref{t:existence-measure}
by reducing to check that the definition of the integral is 
stable when we take a smooth transversal blow-up $\a \colon X_{\p'} \to X_\p$ of snc models.

This follows by similar computations as those carried out in \cref{l:existence-measure-one-blowup},
adding at each step the factors $V_j$ in the motivic coefficients and the 
functions $\f_j$ as factors in the integrals.
We use the same notation as in that \lcnamecref{l:existence-measure-one-blowup}.
By linearity, it suffices to consider the case where $f$ is represented by $f_\p =  [V]_\p \otimes \f_\p$
with $\f_\p$ supported on a face $\Sk_{\p,I}$.
Note that $f$ is also represented by 
$f_{\p'} = \sum_{I' \succeq I}  [V\times_{X_\p} X_{\p'}]_{\p'} \otimes \f_{\p'}$
where $\f_{\p'} := \f_\p \o r_{\p'\p}$. 
We need to show that
\begin{equation}\label{eq:invariance-integral}
[V\times_{X_\p} D_I^\o]_X \int_{\Sk_{\p,I}} \f_\p \, d\n_{\p,I}
= \sum_{I' \succeq I} [V\times_{X_\p} D'^\o_{I'}]_X \int_{\Sk_{\p',I'}} \f_{\p'} \, d\n_{\p',I'}.
\end{equation}
Analyzing the strata $D'^\o_{I'}$ as in the proof of \cref{l:existence-measure-one-blowup}, 
the only terms that matter are $I'$ is either $I$, $\~I$, or $\~I\setminus \{j\}$ for some $1\leq j\leq t$.
We shall analyze in detail the case where $I' = \~I$, which was 
discussed in \cref{case:t-neq-0} in the proof of \cref{l:existence-measure-one-blowup}.
Following the same argument used there, we compute
\begin{align*}
&[V\times_{X_\p} D'^\o_{\~I}]_X \int_{\Sk_{\p',\~I}} \f_{\p'}\, d\n_{\p',I'} = \\
&=(c-t)[V\times_{X_\p}C^\o_I]_X \int_{\Sk_{\p',\~I}} \f_{\p'}(x_0,\ldots,x_s)\, e^{- \sum_{i=0}^s a_ix_i} \,dx_0\,dx_1\,\dots\,dx_s\\
&=(c-t)[V\times_{X_\p}C^\o_I]_X\sum_{j=1}^t \int_{(\Sk_{\p,I})_j} \f_\p(y_1,\cdots y_s)\, \Big(\int_0^{y_j}e^{-(c-t)x_0- \sum_{i=1}^s a_iy_i} \,dx_0\Big)\,dy_1\,\dots\,dy_s\\
&=[V\times_{X_\p}C^\o_I]_X\Big( \int_{\Sk_{\p,I}}\f_\p\, d\n_{\p,I}
-\sum_{j=1}^{t}\int_{(\Sk_{\p,I})_j}\f_\p(y_1,\dots,y_s)\, e^{-(c-t)y_j- \sum_{i=1}^s a_iy_i}\,dy_1\,\dots\,dy_s
\Big).\\
\end{align*}
Similar computations show that the term given by $I' = I$ is equal to
\[
[V\times_{X_\p} (D^\o_{I}\setminus C^\o_I)]_X \int_{\Sk_{\p,I}} \f_\p\, d\n_{\p,I},
\]
and the terms where $I' = \~I\setminus\{j\}$ for $1\leq j\leq t$ are given by
\[
[V\times_{X_\p} C^\o_I]_X \int_{(\Sk_{\p,I})_j}\f_\p(y_1,\dots,y_s)\, e^{-(c-t)y_j- \sum_{i=1}^s a_iy_i}\,dy_1\,\dots\,dy_s.
\]
Adding up all the terms gives us the left hand side of (\ref{eq:invariance-integral}). This completes the proof.
\end{proof}

\begin{example}
Let $X$ be a variety, $\fa \subset \O_X$ an ideal sheaf, and $s \in \R$. 
Let $\p \colon (X_\p,D) \to X$ be a snc model such that $\fa\.\O_{X_\p} = \O_{X_\p}(-\sum b_i D_i)$,
and let $a_i := \^A_X(\val_{D_i})$. Assume that $a_i + sb_i > 0$ for every $i$
(this is automatic, for instance, if we take $s \ge 0$). 
Then 
\[
\int_{X^{\val}} |\fa|^s d\m_X
= \sum_I [D_I^\o]\int_{\R_{> 0}^{|I|}} e^{-\sum_{i \in I}(a_i+sb_i)x_i} dx_1\cdots dx_{|I|} 
= \sum_I \frac{[D_I^\o]}{\prod_{i \in I} (a_i+sb_i)}.
\]
\end{example}

\begin{example}
\label{eg:stringy-class}
Suppose that $X$ is a normal variety
such that the canonical class $K_X$ is $\Q$-Cartier, 
and assume that $X$ has log terminal singularities.
Fix a positive integer $r$ such that $rK_X$ is Cartier, 
and let $\fn_{r,X} \subset \O_X$ be the ideal sheaf defined by 
the image of the natural map 
$(\Om_X^{\dim X})^{\otimes r} \otimes \O_X(-rK_X) \to \O_X$.
One can check that $\^A_X = A_X + \frac 1r\ord(\fn_{r,X})$ where 
$A_X$ is the \emph{log discrepancy function}, which is defined in this
generality in \cite{BdFFU15} (this function takes
value $A_X(\val_E) = \val_E(K_{X_{\p}/X}) + 1$ for every prime divisor $E$ on a
resolution of singularities $X_{\p} \to X$). 
If $\p \colon (X_{\p},D) \to X$ is any local snc model such that $\fn_{r,X}\.\O_Y$
is a locally principal ideal cosupported in $\Supp(D)$, 
and we set $a_i := A_X(\val_{D_i})$, then 
\[
\int_{X^{\val}} |\fn_{r,X}|^{\frac 1r} d\m_X 
= \sum_I [D_I^\o] \int_{\R_{>0}^{|I|}} e^{-\sum_{i \in I} a_ix_i} dx_1\dots dx_{|I|}
= \sum_I \frac{[D_I^\o]}{\prod_{i \in I} a_i}.
\]
This is the \emph{stringy motivic class} of $X$ (cf.\ \cite[Section~4]{Rei02}).
For example, if $X = M/G$ where $M$ is a manifold and $G$ is a finite group, 
then this is equal to $\sum_{[g]} [M^g/C(g)]$,
where the sum runs over conjugacy classes and $C(g) \subset G$ denotes the centralizer of
an element $g \in G$. 
\end{example}

\begin{example}
If $X$ is a smooth variety and $Z$ is a proper closed subscheme of $X$ (e.g., a Cartier divisor), then 
the \emph{log canonical threshold} $\lct(X,Z)$ of the pair $(X,Z)$ is the
supremum of the numbers $t \ge 0$ such that the pair $(X,tZ)$ is log canonical; 
equivalently, $\lct(X,Z)$ is the minimum of the numbers $A_X(\val_E)/\val_E(\I_Z)$.
If the ideal sheaf $\I_Z \subset \O_X$ of $Z$ is locally generated by a single element $h$, 
then the log canonical threshold can be equivalently defined as the supremum of the numbers 
$t\ge 0$ such that $|h|^{-2t}$ is locally integrable on $X$.
It follows by the above computations that $\lct(X,Z)$ is also the supremum value of $t\ge 0$
such that $|\I_Z|^t$ is integrable on $X^{\val}$.
In the singular case, if $X$ and $\fn_{r,X}$ are as in \cref{eg:stringy-class} then 
the log canonical threshold $\lct(X,Z)$ is still defined, and 
is equal to the supremum value of $t\ge 0$ such that $|\fn_{r,X}|^{1/r}|\I_Z|^t$ is integrable on $X^{\val}$.
In general, for an arbitrary variety $X$, one defines the \emph{Mather log canonical threshold}
$\^\lct(X,Z)$ to be the 
supremum of the numbers $t \ge 0$ such that the pair $(X,tZ)$ is Mather log canonical,  
and this condition is equivalent to $|\I_Z|^t$ being integrable on $X^{\val}$.
\end{example}

The motivic integral defined here using Berkovich spaces
is of course closely related to the usual
(geometric) motivic integral defined using arc spaces \cite{Kon95,DL99}. 
For simplicity, assume that $X$ is a smooth variety and $B$ is an effective integral divisor.
Let $X_\infty$ denote the space of formal arcs on $X$.
For the purpose of this discussion, we will denote by $\m^{X_\infty}$ the motivic measure on $X_\infty$. 
By definition the classical motivic integral
$\int_{X_\infty} \L_X^{-\ord(B)} d\m^{X_\infty}$
takes value in a suitable completion of the localization $K_0(\Var_X)[\L_X^{-1}]$.
Using resolution of singularities and the 
change-of-variables formula, 
one shows that the integral can be represented in a natural way by an element in
$K_0(\Var_X)[[\P^a_X]^{-1}]_{a \ge 1}$.
As $[\P_X^a] = a+1$ modulo $\L_X-1$, there is a natural map 
$\Phi \colon K_0(\Var_X)[[\P^a_X]^{-1}]_{a \ge 1} \to (K_0(\Var_X)/(\L_X-1)) \otimes_\Z\Q$, 
and 
\[
\int_{X^{\val}} e^{-\ord(B)} d\m_X = \Psi\Big(\int_{X_\infty} \L_X^{-\ord(B)} d\m^{X_\infty}\Big).
\]
Therefore the motivic integral defined in this paper using Berkovich spaces
recovers, modulo $\L_X-1$, the classical one defined using arc spaces.
The requirement that $B$ be an integral divisor is not essential, and the classical 
definition of motivic integral can be easily extended to deal with $\Q$-divisors. 
This, however, requires to enlarge the motivic ring by introducing 
a symbolic root $\L_X^{1/r}$ of $\L_X$. This step is not needed if the 
integral is defined using Berkovich spaces.  

\begin{remark}
It is possible to extend the theory developed in this section 
using the extended measure defined in \cref{ss:extended-measure}.
For instance, if $\fa \subset \O_X$ is an ideal sheaf and $s > 0$, then one can define
\[
\int_{X^\beth} |\fa|^s d\~\m_X := 
\Big(\int_{V^{\val}} |\fa\.\O_V|^s d\m_V\Big)_{V \subset X}
\]
where $V$ ranges among the closed subvarieties of $X$. 
This defines an element in the extended motivic ring $\~\MM_X$. 
One can look at this \emph{extended motivic integral}
as a way of capturing not only the integral of $\fa$ 
but also of all its restrictions $\fa\.\O_V$.
\end{remark}

\subsection{Change-of-variable formula}
\label{ss:bir-transf-rule}

The approach to motivic integration via Berkovich spaces is 
naturally set up to immediately yield the following key result of the theory. 

\begin{theorem}
\label{t:change-of-vars}
Let $h \colon Y \to X$ be a resolution of singularities, and let $f \in \IMF(X^{\val})$.
Then $(f \o h^{\val})\,|\Jac_h| \in \IMF(Y^{\val})$ and
\[
\int_{X^{\val}} f\,d\m_X = h_* \int_{Y^{\val}} (f\o h^{\val})\,|\Jac_h| \,d\m_Y
\]
in $\MM_X$, where $\Jac_h$ is the Jacobian ideal of $h$.
\end{theorem}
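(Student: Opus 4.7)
The plan is to reduce the change-of-variable formula to a direct comparison of the two integrals computed on a common skeleton. First, since $\IMF(X^{\val})$ is a direct limit over levels, the function $f$ is represented at some level $\p_0$ by an element $f_{\p_0} = \sum_j [V_j]_{\p_0} \otimes \f_j$ with $\f_j \in \IF_{\p_0}$. By resolving the indeterminacy of $h^{-1} \o \p_0 \colon X_{\p_0} \rat Y$ together with a log principalization of $\Jac_h \. \O_Y$, I replace $\p_0$ by some $\p \ge \p_0$ that dominates $Y$ via a morphism $\b \colon X_\p \to Y$ with $\p = h \o \b$, so that $(X_\p, D)$ is a local snc model over both $X$ and $Y$. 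I may further arrange that $\Jac_h \. \O_{X_\p}$ be locally principal and cosupported on $D$; in particular, $|\Jac_h| = e^{-\ord(\Jac_h)}$ defines an element of $\FF_\p$ which is pointwise bounded by $1$ since $\Jac_h$ is an ideal sheaf.

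Second, I compare $\^A_X$ and $\^A_Y$ under the bijection $h^{\val} \colon Y^{\val} \to X^{\val}$. The key identity is
\[
\^A_X(v) = \^A_Y(v) + v(\Jac_h) \fall v \in Y^{\val}.
\]
For a divisorial valuation $\val_E$ with $E$ a divisor on $X_\p$, the factorization $\Jac_\p = \Jac_h \. \Jac_\b$ (valid because $h$ factors through the Nash blow-up of $X$ and $Y$ is smooth) gives
\[
\^A_X(\val_E) = \ord_E(\Jac_\p) + 1 = \ord_E(\Jac_h) + \ord_E(\Jac_\b) + 1 = \val_E(\Jac_h) + \^A_Y(\val_E).
\]
The identity then propagates to all of $X^{\val}$ using the three characterizing properties of $\^A_X$ from the proposition in \cref{ss:^A_X}: piecewise linearity on each $\Sk_\p$, homogeneity, and the density of divisorial valuations in each $\Sk_\p$. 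Consequently, on each face $\Sk_{\p,I}$ the two rescaled volume forms are related by $\om_{\p,I}^X = |\Jac_h| \. \om_{\p,I}^Y$, whence
\[
d\n_{\p,I}^X = |\Jac_h| \, d\n_{\p,I}^Y.
\]
Combined with $|\Jac_h| \le 1$, this also yields $(f \o h^{\val}) \. |\Jac_h| \in \IMF(Y^{\val})$.

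Finally, I evaluate both sides at level $\p$. The left-hand side is
\[
\int_{X^{\val}} f\, d\m_X = \sum_j \sum_I [V_j \times_{X_\p} D_I^\o]_X \int_{\Sk_{\p,I}} \f_j \, d\n_{\p,I}^X.
\]
On the $Y$ side, the function $(f \o h^{\val}) \. |\Jac_h|$ is represented at level $\p$ (viewed as a local snc model over $Y$) by $\sum_j [V_j]_\p \otimes (\f_j \. |\Jac_h|)$, and by the measure identity of the previous paragraph its integral against $\m_Y$ equals
\[
\sum_j \sum_I [V_j \times_{X_\p} D_I^\o]_Y \int_{\Sk_{\p,I}} \f_j \. |\Jac_h| \, d\n_{\p,I}^Y = \sum_j \sum_I [V_j \times_{X_\p} D_I^\o]_Y \int_{\Sk_{\p,I}} \f_j \, d\n_{\p,I}^X.
\]
Applying the push-forward $h_* \colon \MM_Y \to \MM_X$, which simply reinterprets a scheme-over-$Y$ as a scheme-over-$X$, sends $[V_j \times_{X_\p} D_I^\o]_Y$ to $[V_j \times_{X_\p} D_I^\o]_X$, reproducing the left-hand side. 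The main obstacle I expect is the extension of the discrepancy identity $\^A_X = \^A_Y + \ord(\Jac_h)$ from divisorial to arbitrary valuations in $X^{\val}$; once this is in place, the rest of the argument is a direct manipulation of the explicit integral formulas with no further bookkeeping.
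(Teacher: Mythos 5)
Your proposal is correct and takes essentially the same approach as the paper's proof, which observes that via the identification $X^{\val} \simeq Y^{\val}$ the two integrals are of the same function against measures differing by the factor $|\Jac_h|$, a consequence of $\^A_X - \^A_Y = \ord(\Jac_h)$, which in turn follows from the Jacobian chain rule $\Jac_{h\circ g} = \Jac_h \cdot \Jac_g$ for $Y$ smooth. Your write-up is a fully detailed version of the paper's terse argument; the only minor imprecision is attributing the chain rule to ``$h$ factors through the Nash blow-up of $X$'' --- what actually makes $\Jac_\p = \Jac_h \cdot \Jac_\b$ hold is the smoothness of $Y$ and $X_\p$ (and the observation that $|\Jac_h| \le 1$ is not needed, since integrability of $(f\circ h^{\val})|\Jac_h|$ with respect to $\m_Y$ follows directly from the change-of-measure identity $d\n^X_{\p,I} = |\Jac_h|\,d\n^Y_{\p,I}$).
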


\begin{proof}
As $h$ induces an identification $X^{\val} \simeq Y^{\val}$, 
we can pretend that we are integrating the same function on the same space, 
which is however equipped with two different measures.
The formula follows from the observation that 
since $Y$ is smooth, if $g \colon Y' \to Y$ is any proper birational map
from a smooth variety $Y'$, then $\Jac_{h\o g} = \Jac_h \. \Jac_g$, 
which implies that $\^A_X - \^A_Y = \ord(\Jac_h)$ on the space of valuations.
\end{proof}

\begin{example}
Assume that $h \colon Y \to X$ is a proper birational morphism of smooth varieties, and let
$B$ be an $\R$-divisor on $X$ such that $(X,-B)$ is Kawamata log terminal. Then 
\[
\int_{X^{\val}} e^{-\ord(B)} \,d\m_X = h_*\int_{Y^{\val}} e^{-\ord(K_{Y/X} + h^*B)} \,d\m_Y
\]
in $\MM_X$, where $K_{Y/X}$ is the relative canonical divisor. When $B$ is an integral divisor, 
this recovers, modulo $\L_X-1$, the analogous formula in the usual motivic integration
\[
\int_{X_\infty} \L_X^{-\ord(B)} \,d\m^{X_\infty} = h_*\int_{Y_\infty} \L_X^{-\ord(K_{Y/X} + h^*B)} \,d\m^{Y_\infty}.
\]
\end{example}

\section{Push-forward and functoriality}
\label{s:pushforward}

The purpose of this section is to extend the theory of integration on Berkovich 
spaces introduced in the previous pages into a functorial theory, 
in the spirit of \cite{CL08}.

Throughout this section, we fix a variety $Z$ and a local snc model $\t \colon (Z_\t,F) \to Z$.

\subsection{Category of models and restrictions of quasi-monomial valuations}
\label{ss:category-of-models}

We start by introducing a category of local snc models. We work over the model $\t$ fixed above. 

\begin{definition}
\label{d:Lsnc}
We denote by $\Snc_\t$ the category whose objects are commutative diagrams 
\[
\xymatrix{
(X_\p,D) \ar[r]^-\p \ar[d]_{p_{\p\t}} & X \ar[d]^{p} \\
(Z_\t,F) \ar[r]^-\t & Z
}
\]
where $X$ is a variety, $\p$ is a local snc model,
$p$ is a dominant morphism, and $\Supp(p_{\p\t}^*(F)) \subset \Supp(D)$.
For short, we say that $(X_\p,D)$ is a local snc model (over $X$) \emph{above} $\t$. 
A morphism between two local snc models $(X_\p,D)$ and $(Y_\s,E)$ above $\t$ 
is a commutative diagram
\[
\xymatrix{
(X_\p,D) \ar[r]^-\p \ar[ddr]_{p_{\p\t}} \ar[rd]^{b_{\p\s}} & X \ar[ddr]_(.25){p} |!{[d];[dr]}\hole \ar[rd]^b & \\
& (Y_\s,E) \ar[r]^-\s \ar[d]^{q_{\s\t}} & Y \ar[d]^{q} \\
& (Z_\t,F) \ar[r]^-\t & Z
}
\]
such that $\Supp(b_{\p\s}^*(E)) \subset \Supp(D)$. 
The composition of two morphisms is defined in the obvious way,
by composing the respective diagrams. 
\end{definition}

Every morphism $b_{\p\s} \colon (X_\p,D) \to (Y_\s,E)$ of local snc models above $\t$
induces the map on sets of quasi-monomial valuations
\[
b_{\p\s}^{\Sk} := r_\s \o b^{\val}|_{\Sk_{\p}} \colon \Sk_{\p} \to \Sk_{\s}.
\]

\begin{lemma}
\label{l:p^QM}
$b_{\p\s}^{\Sk} \o r_\p = r_\s \o b^{\val}$. 
\end{lemma}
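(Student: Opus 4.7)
The plan is to verify the equality pointwise and reduce to a computation on divisor components. For any $v \in X^{\val}$, I need to show
\[
r_\s\bigl(b^{\val}(v)\bigr) = r_\s\bigl(b^{\val}(r_\p(v))\bigr).
\]
Both expressions are valuations lying in $\Sk_\s$, and any element of $\Sk_\s$ (being quasi-monomial on $Y_\s$ with respect to the toroidal structure determined by $E$) is characterized by the values it assigns to the local equations of the irreducible components $E_1,\dots,E_m$ of $E$. Moreover, for any valuation $w$ of $k(Y)$ with center in $Y_\s$, the retraction $r_\s(w)$ is the monomial valuation whose value on $E_j$ equals $w(E_j)$. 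Thus the problem reduces to checking $b^{\val}(v)(E_j) = b^{\val}(r_\p(v))(E_j)$ for each $j$.

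The next step is to exploit the morphism $b_{\p\s} \colon X_\p \to Y_\s$ of snc models above $\t$. By the defining hypothesis of such a morphism, $\Supp(b_{\p\s}^* E) \subset \Supp(D)$, so each pullback $b_{\p\s}^* E_j$ is a Cartier divisor supported on $D$. Writing the components of $D$ as $D_1,\dots,D_r$, this gives
\[
b_{\p\s}^* E_j = \sum_{i=1}^r c_{ij} D_i
\]
with non-negative integers $c_{ij}$. Under the natural identifications $X_\p^{\val} \simeq X^{\val}$ and $Y_\s^{\val} \simeq Y^{\val}$ induced by the proper birational morphisms $\p$ and $\s$, the map $b^{\val}$ is computed as $b_{\p\s}^{\val}$, and hence for any $w \in X^{\val}$ we have
\[
b^{\val}(w)(E_j) = w(b_{\p\s}^* E_j) = \sum_{i=1}^r c_{ij}\, w(D_i).
\]

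Finally, the defining property of the retraction $r_\p \colon X^{\val} \to \Sk_\p$ is that $r_\p(v)(D_i) = v(D_i)$ for every component $D_i$ of $D$ and every $v \in X^{\val}$. Combining this with the formula above yields
\[
b^{\val}(r_\p(v))(E_j) = \sum_{i=1}^r c_{ij}\, r_\p(v)(D_i) = \sum_{i=1}^r c_{ij}\, v(D_i) = b^{\val}(v)(E_j)
\]
for every $j$. Applying $r_\s$ to both sides gives the claimed identity. The only real content is the bookkeeping in passing from divisors on $Y_\s$ to divisors on $X_\p$ via $b_{\p\s}^*$, so no serious obstacle is expected; everything else is a direct application of the characterizing properties of $r_\p$ and $r_\s$.
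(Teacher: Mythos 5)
Your proof is correct and follows essentially the same route as the paper's: unwind the definition of $b_{\p\s}^{\Sk}$ to reduce the claim to the equality $b^{\val}(r_\p(v))(E_j) = b^{\val}(v)(E_j)$ for each component $E_j$ of $E$, use $\Supp(b_{\p\s}^*E) \subset \Supp(D)$ to write $b_{\p\s}^*E_j$ as an integer combination of the $D_i$, and conclude via the defining property $r_\p(v)(D_i) = v(D_i)$ of the retraction. The only (welcome) addition is that you spell out explicitly why matching values on the $E_j$ suffices, namely that $r_\s(w)$ is the monomial valuation determined by the values $w(E_j)$.
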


\begin{proof}
For any $v_x \in X^{\val}$, we need to show that
$b^{\Sk}(r_\p(v_x)) = r_\s(b^{\val}(v_x))$.
Note that, by the definition of $b^{\Sk}$, the left-hand-side of this equation is equal to $r_\s (b^{\val}(r_\p(v_x)))$. 
It is therefore enough to show that, writing $E = \sum_j E_j$ where $E_j$ are the irreducible components, we have
$b^{\val}(r_\p(v_x))(E_j) = b^{\val}(v_x)(E_j)$ for all $j$.
Since $\Supp(b_{\p\s}^*(E)) \subset \Supp(D)$, we can write $b^*E_j = \sum_i a_{ij}D_i$, 
where as usual $D_i$ are the irreducible components of $D$. We have
\[
b^{\val}(r_\p(v_x))(E_j) 
= \sum a_{ij}\. r_\p(v_x)(D_i)
= \sum a_{ij}\. v_x(D_i)
= b^{\val}(v_x)(E_j), 
\]
which gives what we need.
\end{proof}

Next, we define a pull-back homomorphism 
\[
b^* \colon \MF(Y^{\val}) \to \MF(X^{\val})
\]
along a dominant morphism $b \colon X \to Y$.
We proceed as follows.
Given $g \in \MF(Y^{\val})$, we fix a local snc model $(Y_{\s},E)$ over $Y$ so that
$g$ is represented by an element $g_\s \in \MF(Y^{\val})$.
Let then $(X_\p,D)$ be a local snc model over $X$ above $\s$, 
so that $b_{\p\s} \colon (X_\p,D) \to (Y_\s,E)$ is defined. 
Arguing as in the construction of $\a^* \colon \MF_{\p'} \to \MF_\p$ given in \cref{ss:MC-functions}, 
we obtain a commutative diagram
\[
\xymatrix{
\MM_{Y_\s} \ar[d]_{b_{\p\s}^*} & \FF_\s^\o \ar[d] \ar[l] \ar[r] & \FF_\s \ar[d] \\
\MM_{X_\p} & \FF_\p^\o \ar[l] \ar[r] & \FF_\p 
}
\]
where $b_{\p\s}^*$ is the ring homomorphism
defined by mapping a class $[V]_\s$ to the class $[V \times_{Y_\s} X_\p]_\p$
and the other vertical arrows are given by pulling back functions via $b_{\p\s}^{\Sk}$. 
We obtain a ring homomorphism $b_{\p\s}^* \colon \MF_\s \to \MF_\p$.
Using \cref{l:p^QM}, we see that if $\p' \ge \p$ and $\s' \ge \s$ are other models, 
with $\p'$ above $\s'$, and $\a \colon (X_{\p'},D') \to (X_\p,D)$
and $\b \colon (Y_{\s'},E') \to (Y_\s,E)$ are the induced maps, then 
$b_{\p'\s'}^* \o \b^* = \a^* \o b_{\p\s}^*$. 
We can therefore define $b^*(g)$ 
to be the element represented by $b_{\p\s}^*(g_\s)$ in $\MF_\p$. 
The commutativity of the above diagram ensures that the definition of $b^*(g)$
is independent of the choice of models. 

\begin{proposition}
The assignment given on objects by $X \mapsto \MF(X^{\val})$
and on morphisms $b \mapsto b^*$
define functors from the category of varieties and dominant morphisms 
to the category of vector spaces over $\R$. 
\end{proposition}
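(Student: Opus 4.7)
The plan is to verify the two functoriality axioms: $\id_X^* = \id_{\MF(X^{\val})}$ and $(c \circ b)^* = b^* \circ c^*$ for composable dominant morphisms $b \colon X \to Y$ and $c \colon Y \to Z$. The identity axiom is immediate from the construction: for $b = \id_X$ one may choose $\p = \s$ so that $b_{\p\s}$ is the identity morphism of models, whence $b_{\p\s}^{\Sk} = \id_{\Sk_\p}$ and $b_{\p\s}^*$ is the identity on both $\MM_{X_\p}$ and $\FF_\p$. So the bulk of the argument concerns composition.

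Given $h \in \MF(Z^{\val})$, represented by $h_\t \in \MF_\t$, I would first choose local snc models in a compatible tower. By resolution of singularities applied to the indeterminacies of $Y \dashrightarrow Z_\t$ and $X \dashrightarrow Y_\s$, one can produce $\s \colon (Y_\s,E) \to Y$ above $\t$ and $\p \colon (X_\p,D) \to X$ above $\s$; then $\p$ is automatically above $\t$ via the composition $p_{\p\t} = q_{\s\t} \circ b_{\p\s}$. Moreover, the condition on boundary supports composes: $\Supp((cb)_{\p\t}^*F) = \Supp(b_{\p\s}^*\s^*F) \subset \Supp(b_{\p\s}^*E) \subset \Supp(D)$. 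Since the definition of $c^*(h)$, $b^*(c^*(h))$ and $(cb)^*(h)$ is independent of the choice of models in their respective constructions, it suffices to show the equality of ring homomorphisms
\[
(cb)_{\p\t}^* = b_{\p\s}^* \circ c_{\s\t}^* \colon \MF_\t \to \MF_\p.
\]

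By the tensor decomposition $\MF_\t = \MM_{Z_\t} \otimes_{\FF_\t^\o} \FF_\t$, it is enough to verify the equality on pure tensors $[V]_\t \otimes \f$. For the motivic factor, associativity of fiber products gives
\[
(V \times_{Z_\t} Y_\s) \times_{Y_\s} X_\p \;\cong\; V \times_{Z_\t} X_\p,
\]
so $b_{\p\s}^*(c_{\s\t}^*([V]_\t)) = [V \times_{Z_\t} X_\p]_\p = (cb)_{\p\t}^*([V]_\t)$. For the function factor, I need the skeletal identity
\[
(cb)_{\p\t}^{\Sk} \;=\; c_{\s\t}^{\Sk} \circ b_{\p\s}^{\Sk} \colon \Sk_\p \to \Sk_\t.
\]
Here is the key computation, and it is where I expect the main (but modest) technical work to sit. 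For $v \in \Sk_\p$, setting $w = b^{\val}(v) \in Y^{\val}$, Lemma \ref{l:p^QM} applied to $c$ gives $c_{\s\t}^{\Sk}(r_\s(w)) = r_\t(c^{\val}(w))$. Combining with the definitions,
\[
c_{\s\t}^{\Sk}(b_{\p\s}^{\Sk}(v)) = c_{\s\t}^{\Sk}(r_\s(b^{\val}(v))) = r_\t(c^{\val}(b^{\val}(v))) = r_\t((cb)^{\val}(v)) = (cb)_{\p\t}^{\Sk}(v).
\]
The hard part is really just the careful bookkeeping confirming that Lemma \ref{l:p^QM} can be applied as above (which requires $\s$ to be above $\t$, verified at the outset) and that the two pullback diagrams for $b_{\p\s}^*$ and $c_{\s\t}^*$ assemble into a commutative square of $\FF^\o$-algebras so that the tensor product of the two identities descends to $\MF_\p$. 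Once these pieces are in place, the equality of ring homomorphisms follows, which proves $(cb)^* = b^* \circ c^*$ and completes the verification of functoriality.
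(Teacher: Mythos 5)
Your proof is correct and is the natural verification of functoriality that the paper omits as ``straightforward.'' The key step, applying Lemma~\ref{l:p^QM} to $c_{\s\t}$ in a compatible tower of models $\p$ above $\s$ above $\t$ to obtain $(cb)_{\p\t}^{\Sk} = c_{\s\t}^{\Sk}\circ b_{\p\s}^{\Sk}$, combined with associativity of fiber products on the motivic component and the compatibility over $\FF^\o$, is exactly what is needed (the only slip is notational: $\Supp(b_{\p\s}^*\s^* F)$ should read $\Supp(b_{\p\s}^* c_{\s\t}^* F)$, since $F$ lives on $Z_\t$, but the intended chain of support inclusions is clear and correct).
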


The proof is straightforward and is omitted.

\subsection{Motivic Functions of finite level}
\label{ss:MotivicFunctions}

The remainder of the section is devoted to the construction of push-forwards. 
The next example is meant to serve as a motivation for the definitions that follow. 

\begin{example}
\label{eg:functoriality}
Let $\a \colon (X_{\p'},D_{\p'}) \to (X_\p,D_\p)$ be a morphism of snc models over
a smooth variety $X$
where $D_\p$ is the sum of two prime divisors $D$ and $E$ with $D \cap E \ne \emptyset$, and 
$\a$ is the blow-up along $D \cap E$.
We take $D_{\p'} = \a^*D_\p = D' + E' + F$ where $F$ is the exceptional divisor.
Let $R \subset \Sk_{\p'}$ be the ray spanned by $\val_F$, and let
$S \subset \Sk_\p$ be the images of $R$ via $r_{\p'\p} \colon \Sk_{\p'} \to \Sk_\p$. 
Note that while $R$ is a face of $\Sk_{\p'}$, $S$ is only a potential face in $\Sk_\p$. 
Given a function $f \in \MF(X^{\val})$ determined by some $f_{\p'} \in \MF_{\p'}$, we wish to realize the integral
$\int_{X^{\val}} f\,d\m_X$ as a push-forward of $f_{\p'}$ via some map $\MF_{\p'} \to \MF_{\id_X} = \MM_X$. 
Such push-forward should be functorial.
Suppose that $f_{\p'}$ is supported on $R$.
Then its push-forward via $\MF_{\p'} \to \MF_\p$, which we denote by $f_\p$, must be supported on $S$. 
This means that $f_\p$ is almost-everywhere zero for the measure defined in $\Sk_\p$, hence its push-forward 
via $\MF_\p \to \MM_X$ can only be zero. However, $f_{\p'}$ may contribute nontrivially
to the integral of $f$, since $R$ has nonzero measure. 
The issue here is that even though $R = S$ as subsets in $X^{\val}$, the measures this set
inherits from $\Sk_{\p'}$ and $\Sk_\p$ are different.
The solution to this impasse is to remember the measure the potential face $S$ had when it was realized 
as an actual face $R$. 
\end{example}

Let $X$ be a variety of dimension $n$.
For every $0 \le i \le n-1$, we let $\Func^i(\Sk_\p,\R)$
denote the set of functions that are each
supported within a finite union of potential faces of relative codimension $i$.
Note that $\Func^0(\Sk_\p,\R) = \Func(\Sk_\p,\R)$
and, for $i > 0$, $\Func^i(\Sk_\p,\R)$ is a non-unital associative $\R$-algebra.
Let then $\Func^i_0(\Sk_\p,\R) \subset \Func^i(\Sk_\p,\R)$ be the ideal of functions
whose restriction to any potential face $R$ of relative codimenision $i$
is almost-everywhere zero with respect to the measure $\n_R$.

We define
\[
\GG_\p^i := \Func^i(\Sk_\p,\R)/\Func^i_0(\Sk_\p,\R)
\]
and
\[
\GG_\p := \bigoplus_{i=0}^{n-1} \GG_\p^i.
\]
Note that there is a natural surjective map $\FF_\p \surj \GG_\p^0$. 
The kernel consists of those functions on $\Sk_\p$ 
that are almost-everywhere zero on the faces $\Sk_{\p,I}$
but may fail to be almost-everywhere zero on some potential faces 
of positive relative codimension. 

\begin{definition}
An element in $\GG_\p$ is denoted by $\bm{\g} = (\g^0,\dots,\g^{n-1})$ 
and called a \emph{Function of level $\p$} (with a capital \emph{F} 
to remind us that this element is a vector).
We think of the components $\g^i$ as almost-everywhere defined functions
(on unions of potential faces of relative codimension $i$), 
and simply refer to them as \emph{functions}.
We say that an element $\bm{\g} = (\g^0,\dots,\g^{n-1}) \in \GG_\p$
is \emph{concentrated in relative codimension $0$} if $\g^i = 0$ for $i > 0$. 
\end{definition}

\begin{remark}
\label{r:honest-function-on-refinement}
It might be reassuring to keep in mind that any element $\bm{\g} = (\g^0,\dots,\g^{n-1}) \in \GG_\p$
can be viewed as a Function concentrated in relative codimension $0$
on some sufficiently high refinement $\p'$ of $\p$.
Indeed, for every $i$ let $R_{i,j}$ be the potential faces of relative codimension $i$
where $\g^i$ is supported. Let $\p' \ge \p$ be a refinement such that each $R_{i,j}$
is an actual face of $\Sk_{\p'}$. Note that $\Sk_{\p'} = \Sk_\p$ as subsets of $X^{\val}$, 
and $\FF_{\p'} = \FF_\p$.
We can then find an element $\f \in \FF_\p$ to represent $\bm{\g}$, in the following sense. 
For every index set $I$, let $i(I)$ be the relative codimension of $\Sk_{\p',I}$
as a potential face of $\Sk_\p$, and define $\f_i \in \FF_\p$ to be the function
that agrees with $\g^i$ on each face $\Sk_{\p',I}$ with $i(I) = i$ and is zero elsewhere. 
We then take $\f := \sum_i \f_i \in \FF_\p$. 
By construction, the restriction of $\f$ to any $R_{i,j}$ agrees (almost-everywhere) with
the corresponding $\g^i$,
and the function is zero on the complement of $\bigcup_{i,j} R_{i,j}$. 
We can regard $\f$ as defining an element in $\GG_{\p'}$ concentrated in relative codimension $0$.
\end{remark}

Each $\GG_\p^i$ has a natural $\FF_\p^\o$-module structure
via multiplication of functions, and this induces an $\FF_\p^\o$-module structure on $\GG_\p$. 
We define 
\[
\MG_\p := \MM_{X_\p} \otimes_{\FF_\p^\o} \GG_\p  = \bigoplus_{i=0}^{n-1} \MG_\p^i
\]
where $\MG_\p^i := \MM_{X_\p} \otimes_{\FF_\p^\o} \GG^i_\p$.
Note that $\MG_\p^0$ is a quotient of $\MF_\p$.

\begin{definition}
An element of $\MG_\p$ is denoted by $\bm{g} = (g^0,\dots,g^{n-1})$, 
where $g^i \in \MG_\p^i$, and is called a 
\emph{motivic Function of level $\p$} (with a capital \emph{F}).
We say that an element $\bm{g} = (g^0,\dots,g^{n-1}) \in \MG_\p$
is \emph{concentrated in relative codimension $0$} if $g^i = 0$ for $i > 0$. 
\end{definition}

Returning to the setting fixed at the beginning of the section, 
where we work over a variety $Z$ and a local snc model $\t \colon (Z_\t,F) \to Z$, 
let 
\[
b_{\p\s} \colon (X_\p,D) \to (Y_\s,E)
\]
be a morphism of local snc models above $\t$ (i.e., a morphism in the category $\Snc_\t$).
Let $R \subset \Sk_\p$ be a potential face, and
let $S \subset \Sk_\s$ be the image of $R$
via the map $b_{\p\s}^{\Sk} \colon \Sk_\p \to \Sk_\s$ defined in \cref{ss:category-of-models}.
For short, we denote by $b_{R} \colon R \to S$ the induced map on these cones.

\begin{lemma}
There is a decomposition
$S = \big(\bigsqcup_{\a=1}^m S_\a\big) \sqcup S'$ 
where $S_\a$ are potential faces of $\Sk_\s$
of the same dimension $s$ and $S' \subset S$ is a finite union of potential faces of dimension $< s$. 
\end{lemma}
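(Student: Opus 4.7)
The plan is to exploit the piecewise-linear, integral-affine nature of $b_{\p\s}^{\Sk}$. Since every potential face of $\Sk_\p$ becomes an actual face on some refinement, and refining $\p$ does not alter $\Sk_\p$ as a subset of $X^{\val}$, I may assume at the outset that $R = \Sk_{\p, I}$ for some index set $I$. Writing $D = \sum_i D_i$ and $E = \sum_j E_j$, the containment $\Supp(b_{\p\s}^*(E)) \subseteq \Supp(D)$ allows me to write $b_{\p\s}^*(E_j) = \sum_i a_{ij} D_i$ with $a_{ij} \in \Z_{\ge 0}$, and the computation in the proof of \cref{l:p^QM} gives, for every $v \in R$, the identity $b^{\val}(v)(E_j) = \sum_{i \in I} a_{ij}\, v(D_i)$. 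Setting $J := \{j : a_{ij} > 0 \text{ for some } i \in I\}$, the image of $R$ under $b^{\val}$ lies precisely in the locus where the coordinates indexed by $J$ are positive, so via the canonical coordinates $\ff_{\p, I}$ and $\ff_{\s, J}$ the map $b_R$ is identified with the restriction to $\R^{|I|}_{>0}$ of the linear map $L \colon \R^{|I|} \to \R^{|J|}$ with the nonnegative integer matrix $(a_{ij})_{i \in I,\, j \in J}$.

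Consequently $S = b_R(R)$ is the relative interior of the rational polyhedral cone $\ov C := L(\R^{|I|}_{\ge 0})$, which has dimension $s := \rank L$ and, by the definition of $J$, is contained in the closure of $\Sk_{\s, J}$ with $\mathrm{relint}(\ov C) \subset \Sk_{\s, J}$. The core step is then to refine $\s$ so that $\ov C$ becomes a union of closed faces of $\Sk_{\s'}$ for some $\s' \ge \s$. This is a standard consequence of the combinatorial resolution of toroidal embeddings: any rational polyhedral subcone of a simplicial cone in a fan can be realized as a union of faces after a finite sequence of star subdivisions along integral interior rays, and each star subdivision of $\Sk_\s$ along the sum of generators of a face $\Sk_{\s, J_0}$ corresponds to blowing up the stratum $E_{J_0} \subset Y_\s$. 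After the necessary sequence of such blow-ups, the relative interiors of the $s$-dimensional faces of $\Sk_{\s'}$ contained in $\ov C$ supply the $S_\a$, and the relative interiors of the lower-dimensional faces of $\Sk_{\s'}$ contained in $S = \mathrm{relint}(\ov C)$ collect to form $S'$.

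The principal obstacle is the toric-combinatorial subdivision step: one must verify that the subdivision realizing $\ov C$ as a subfan can be carried out using only star subdivisions that correspond to blow-ups of strata in legitimate snc models in the sense of \cref{d:snc-model}, so that the resulting $\s'$ really qualifies as a refinement of $\s$ (and not merely an abstract subdivision of a fan). This is the standard combinatorial resolution of toroidal embeddings, but care must be taken to ensure that every intermediate model remains snc and that refinements of $\s$ are taken in the appropriate sense. A minor side check is that the top-dimensional faces produced by the subdivision all have dimension exactly $s = \rank L$, which is automatic since any subdivision of an $s$-dimensional cone produces only $s$-dimensional top-dimensional faces.
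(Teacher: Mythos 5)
Your proof is correct and follows essentially the same route as the paper's: identify the index set $J$ via the pullback divisor, recognize that the closure of $S$ is the rational polyhedral cone spanned by the images of the $\val_{D_i}$ (you make this explicit via the integral matrix $(a_{ij})$ and the linear map $L$, while the paper phrases it as the cone spanned by the pushed-forward divisorial valuations), and then perform a stellar/star subdivision realized by a refinement $\s' \ge \s$ so that the cone becomes a union of faces, with the $s$-dimensional pieces dense. The one small addition you make — first refining $\p$ so that $R$ is an actual face rather than merely a potential one — cleanly handles potential faces of positive relative codimension, which the paper's opening sentence (choosing $I$ with $|I| = \dim R$) implicitly assumes away; otherwise the arguments coincide, including the shared reliance on the combinatorial fact that star subdivisions of the fan can be realized by blow-ups of strata.
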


\begin{proof}
Let $I$ be the index set such that $R \subset \Sk_{\p,I}$ and $|I| = \dim R$ (i.e., $R$ is an open 
subset of $\Sk_{\p,I}$). The condition that $\Supp(b_{\p\s}^*(E)) \subset \Supp(D)$ implies that
$b_{\p\s}(D_I)$ is contained in some stratum $E_J$. We pick $J$ such that $E_J$ is minimal
with this property. Then $S \subset \Sk_{\s,J}$. Furthermore, the closure of $S$
in $\ov{\Sk_{\s,J}} \cong \R_{\ge 0}^t$ is a convex rational polyhedral cone, 
spanned over $\R_{\ge 0}$ by the divisorial valuations 
$b_{\s\p}(\val_{D_i})$ for $i \in I$. 
As a convex polyhedral cone, the relative interior of $S$ is equidimensional and is dense in $S$. 
We let $s$ be the relative dimension of $S$.

The fact that $S$ is a finite union of potential faces
can be checked for instance using toric geometry and functorial resolution 
of singularities, as follows. Denoting by 
$\D$ the standard fan in $\ov{\Sk_{\s,J}}$, let $X(\D) = \A^k$ be the toroidal chart of $Y_\s$ centered
at the generic point of $E_J$, and let $\S$ be any rational polyhedral fan 
with $|\S| = |\D|$ such that $S$ is a union of faces of $\S$. 
Since $\S$ is a refinement of $\D$, we have a proper birational toric 
morphism $\f \colon X(\S) \to X(\D)$. By \cite[Theorem~3.26]{Kol07}, 
we can find a resolution of indeterminacies of $\f^{-1}$
given by a sequence of toric blow-ups with smooth irreducible centers. This corresponds
to a refinement $\D'$ of $\D$, and
the induced map $\f' \colon X(\D') \to X(\S)$ expresses $\D'$ as a refinement of $\S$, 
showing that $S$ is a union of faces of $\D'$. 
The given sequence of smooth toric blow-ups giving $X(\D') \to X(\D)$
determines a sequence of blow-ups of strata, starting from $Y_\s$, which produces
a model $Y_{\s'}$, and by construction $S$ becomes a union of faces on $\Sk_{\s'}$. 
To conclude, just notice that the union of faces of $\Sk_{\s'}$ of dimension $s$ contained in $S$ is dense in $S$,
and all other faces contained in $S$ have dimension $< s$.
\end{proof}

In the setting of the lemma, we write $S \aeeq \bigsqcup_{\a=1}^m S_\a$.
We stress that with this notation we tacitly assume that all $S_\a$ are potential faces of the same dimension.
The sum of the forms $\om_{S_\a}$ on $S_\a$ defines almost-everywhere, 
via the embedding $\bigsqcup_{\a=1}^m S_\a \subset S$, an $s$-form on $S$
which we denote by $\om_S$. 
Letting $r = \dim R$, we can pick an $(r-s)$-form $\om_{R/S}$ on $R$ such that
\[
\om_R = \om_{R/S} \wedge b_R^*(\om_S).
\]
We remark that the choice of $\om_{R/S}$ is not unique, but its restriction to the fibers
of $b_R$ is independent of the choice. 
Note that if $R' \subset R$ is another potential face of $\Sk_\p$ of the same dimension $r$
then its image $S' \subset \Sk_\s$ has the same dimension $s$ of $S$ and, up to sign, 
$\om_{R/S}|_{R'} = \om_{R'/S'}$. 
For every $y \in S$, the fiber $b_R^{-1}(y)$ has dimension $r-s$
and the restricted form $\om_{R/S}|_{b_R^{-1}(y)}$ is a top form on the fiber.

\begin{definition}
An element $\g \in \GG_\p^i$ is \emph{relatively integrable over $\s$}
if for every potential face $R \subset \Sk_\p$ of relative codimension $i$
we have
\[
\int_{b_R^{-1}(y)} |\g\,\om_{R/S}| < \infty 
\]
for almost every $y \in S$. We denote by $\relIG{\s}_\p^i \subset \GG_\p^i$
the subspace of relatively integrable functions over $\s$. 
\end{definition}

\begin{remark}
Since an element $\g\in \GG_\p^i$ is uniquely defined almost-everywhere on a potential face $R$
of codimension $i$, 
its restriction to $b_R^{-1}(y)$ is uniquely defined almost-everywhere for almost every $y \in S$.
Therefore the condition of integrability is well-posed.
\end{remark}

\begin{definition}
A Function of level $\p$ on $X^{\val}$ is \emph{relatively integrable over $\s$} if it belongs to 
\[
\relIG{\s}_\p := \bigoplus_{i=0}^{n-1} \relIG{\s}_\p^i.
\]
Similarly, a motivic Function of level $\p$ on $X^{\val}$
is \emph{relatively integrable over $\s$} if it belongs to the module
\[
\relIMG{\s}_\p := \MM_{X_\p} \otimes_{\FF_\p^\o} \relIG{\s}_\p.
\]
If $\s = \id_Y$, then we write $\relIG{Y}_\p$ and $\relIMG{Y}_\p$.
If moreover $Y = \Spec k$, then we write $\relIG{k}_\p$ and $\relIMG{k}_\p$.
\end{definition}

A special case of particular interest is when we take $Y = X$ and $\s = \id_X$. 

\begin{definition}
A Function of level $\p$ on $X^{\val}$ is \emph{integrable} if it belongs to 
$\IG_\p := \relIG{X}_\p$.
Similarly, a motivic Function of level $\p$ on $X^{\val}$
is \emph{integrable} if it belongs to the module
$\IMG_\p := \relIMG{X}_\p$.
\end{definition}

\subsection{Push-forward}
\label{ss:PushForward}

Given a morphism 
$b_{\p\s} \colon (X_\p,D) \to (Y_\s,E)$ 
in the category $\Snc_\t$, the first step is to define a push-forward map 
$b_{\p\s!} \colon \relIG{\t}_\p \to \relIG{\t}_\s$.

Every element in $\relIG{\t}_\p$ can be written as a finite sum of elements of the 
form $\bm{\g} = (\g^0,\dots,\g^{n-1})$ where each $\g^i$ is supported on a 
unique face $R_i$ of relative codimension $i$.
For simplicity, we first define the push-forward on elements of this form, and then
extend by linearity. 

So, let $\bm{\g} = (\g^0,\dots,\g^{n-1}) \in \relIG{\t}_\p$ be an element as above.
Let $\g := \g^i$ be one of the components and $R = R_i$ the corresponding face of 
relative codimension $i$.
Let $S \subset \Sk_\s$ and $T \subset \Sk_\t$ be the images of $R$, 
and let $b_R \colon R \to S$, $p_R \colon R \to T$, and $q_S \colon S \to T$ be the induced maps.
Note that there are index sets $I$, $J$, and $K$ such that $R \subset \Sk_{\p,I}$, 
$S \subset \Sk_{\s,J}$ and $T \subset \Sk_{\t,K}$. 
In particular, $R$ has relative codimension $i$
and, writing $S \aeeq \bigsqcup S_\a$ and $T \aeeq \bigsqcup T_\b$ (with the above convention), 
we have that each $S_\a$ is a potential face of the same relative codimension $j$ and
each $T_\b$ is a potential face of the same relative codimension $k$. 
As explained before, this induces forms $\om_{R/S}$ and $\om_{R/T}$ defined almost-everywhere on $R$
and a form $\om_{S/T}$ defined almost-everywhere on $S$. 
Since $\g$ is relative integrable over $\t$, we have
\[
\int_{p_R^{-1}(z)} |\g\,\om_{R/T}| < \infty
\]
for almost every $z \in T$. Along the fibers $p_R^{-1}(z)$, we have 
$|\om_{R/T}| = |\om_{R/S} \wedge b_R^*(\om_{S/T})|$ for almost every $z \in T$. 
Applying Fubini's theorem, we deduce that the quantity
\[
\ff(y) := \int_{b_R^{-1}(y)} |\g\,\om_{R/S}|
\]
is well-defined (and finite) for almost every $y \in S$, and defines almost-everywhere
a function on $S$ that is measurable and integrable over $\t$. Moreover, we have
\[
\int_{p_R^{-1}(z)} |\g\,\om_{R/T}| = \int_{q_S^{-1}(z)} |\ff\,\om_{S/T}| 
\]
for almost every $z \in T$. Clearly, $\ff$, 
as a function defined almost-everywhere on $S$, 
is independent of the choice of representative of $\g$. 
Extending $\ff$ by zero outside of $S$, we obtain an element 
in $\relIG{\t}_\s^{(j)}$, which we denote by $b_{\p\s!}(\g)$.

For every $\bm{\g} \in \relIG{\t}_\p$ as above, 
we define the push-forward $b_{\p\s!}(\bm{\g})$ of $\bm{\g}$ by $p_{\p\s}$ 
to be the element of $\relIG{\t}_\s$ whose component $b_{\p\s!}(\bm{\g})^j \in \relIG{\t}_\s^{(j)}$
is the sum of all $b_{\p\s!}(\g^i)$ that are supported, according to the above 
construction, on a potential face $S_i$ of relative codimension $j$. 
We define the push-forward map
\[
b_{\p,\s!} 
\colon \relIG{\t}_\p \to \relIG{\t}_\s
\]
by extending the definition by linearity.

With the notation as in \cref{d:Lsnc}, it follows from what observed above that 
$p_{\p\t!} = q_{\s\t!} \o b_{\p\s!}$. 
A similar argument shows functoriality, which is stated next.

\begin{proposition}
\label{t:relIG-functorial}
The assignment given on objects by $(X_\p,D) \mapsto \relIG{\t}_\p$
and on morphisms by $b_{\p\s} \mapsto b_{\p\s!}$
defines a functor from $\Snc_\t$ to the category of vector spaces over $\R$. 
\end{proposition}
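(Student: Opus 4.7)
The plan is to verify the two axioms of a functor. Linearity of each push-forward map $b_{\p\s!}$ and its independence from the decomposition of $\bm\g$ into components supported on single potential faces are built into the construction, so the work reduces to checking preservation of identities and of compositions. For the identity morphism on $(X_\p,D)$, every potential face $R\subset\Sk_\p$ maps to itself, the fibers of $b_R$ are single points, the relative form $\om_{R/S}$ reduces to the constant $0$-form $1$, and the defining fiber integral reduces to evaluation. Summing over graded components then shows that the identity morphism induces the identity map on $\relIG{\t}_\p$.

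For compatibility with composition, consider composable morphisms
\[
b_{\p\s}\colon(X_\p,D)\to (Y_\s,E)\quad\text{and}\quad c_{\s\r}\colon(Y_\s,E)\to(Z_\r,G)
\]
in $\Snc_\t$, and set $d_{\p\r}:=c_{\s\r}\o b_{\p\s}$. By linearity it suffices to treat a Function $\bm\g$ whose single nonzero component $\g$ is supported on one potential face $R\subset\Sk_\p$. Let $S\subset\Sk_\s$ and $T\subset\Sk_\r$ be the images of $R$ under $b_{\p\s}^\Sk$ and $d_{\p\r}^\Sk$ respectively; by \cref{l:p^QM} applied twice we have $d_{\p\r}^\Sk=c_{\s\r}^\Sk\o b_{\p\s}^\Sk$, so $T$ is also the image of $S$ under $c_{\s\r}^\Sk$. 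Writing $b_R\colon R\to S$, $c_S\colon S\to T$, and $p_R=c_S\o b_R\colon R\to T$, the relative volume forms satisfy
\[
\om_{R/T}\aeeq\om_{R/S}\wedge b_R^*(\om_{S/T}),
\]
up to sign, almost everywhere on $R$. Applying Fubini's theorem to the nested fibration $R\to S\to T$, for almost every $z\in T$ we obtain
\[
\int_{p_R^{-1}(z)}|\g\,\om_{R/T}|
=\int_{c_S^{-1}(z)}\Big(\int_{b_R^{-1}(y)}|\g\,\om_{R/S}|\Big)|\om_{S/T}|
=\int_{c_S^{-1}(z)}|b_{\p\s!}(\g)\,\om_{S/T}|,
\]
which is precisely the value of $c_{\s\r!}(b_{\p\s!}(\g))$ at $z$. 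Extending by linearity to all components of $\bm\g$ gives $d_{\p\r!}(\bm\g)=c_{\s\r!}(b_{\p\s!}(\bm\g))$.

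The main obstacle will be the combinatorial bookkeeping around the gradings. One must verify that the stratifications $S\aeeq\bigsqcup S_\a$ and $T\aeeq\bigsqcup T_\b$ into potential faces of constant relative codimension $j$ and $k$ line up so that both $d_{\p\r!}(\bm\g)$ and $c_{\s\r!}(b_{\p\s!}(\bm\g))$ land in the same graded piece of $\relIG{\t}_\r$. This amounts to observing that the relative codimension of $d_{\p\r}^\Sk(R)$ in $\Sk_\r$ agrees with the relative codimension in $\Sk_\r$ of the image of any potential face $S_\a$ under $c_{\s\r}^\Sk$, which follows from the polyhedral description of these images as images of rational convex cones together with the functorial refinement arguments already used in the lemma preceding the definition of $b_{\p\s!}$. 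Once this alignment is in place, the Fubini identity above gives the required equality and functoriality follows.
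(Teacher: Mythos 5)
Your proof is correct and takes essentially the same approach the paper intends: the paper proves the special case $p_{\p\t!} = q_{\s\t!} \o b_{\p\s!}$ explicitly in the course of defining the push-forward (via the factorization $|\om_{R/T}| = |\om_{R/S} \wedge b_R^*(\om_{S/T})|$ and Fubini's theorem on the nested fibration $R \to S \to T$), and then states that "a similar argument shows functoriality." You have carried out exactly that similar argument, correctly using \cref{l:p^QM} to obtain $d_{\p\r}^{\Sk} = c_{\s\r}^{\Sk} \o b_{\p\s}^{\Sk}$ and correctly noting that your bookkeeping concern about gradings resolves because $S$ sits inside a single face $\Sk_{\s,J}$ on which $c_{\s\r}^{\Sk}$ is linear, so the images $c_{\s\r}^{\Sk}(S_\a)$ all span the same linear subspace and land in potential faces of the same dimension.
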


A more careful application of Fubini--Tonelli's theorem yields the following property. 

\begin{proposition}
\label{p:Fubini-Tonelli-for-G}
Let $b_{\p\s} \colon (X_\p,D) \to (Y_\s,E)$ be a morphism of locally snc models over $\t$
and $\bm{\g} = (\g^0,\dots,\g^{n-1}) \in \GG_\p$ be any element.
\begin{enumerate}
\item
If $\bm{\g} \in \relIG{\t}_\p$ then $\bm{\g} \in \relIG{\s}_\p$ 
and $b_{\p\s!}(\bm{\g}) \in \relIG{\t}_\s$. 
\item
Assuming $\g^i \ge 0$ for every $i$, the converse holds too, hence we have that
$\bm{\g} \in \relIG{\t}_\p$ if and only if $\bm{\g} \in \relIG{\s}_\p$ 
and $b_{\p\s!}(\bm{\g}) \in \relIG{\t}_\s$. 
\end{enumerate}
\end{proposition}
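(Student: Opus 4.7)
The plan is to reduce to a single-component Function supported on one potential face and then apply Tonelli's theorem to the tower of fibrations obtained by iterating $b_{\p\s}^{\Sk}$ and $q_{\s\t}^{\Sk}$.

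First, by linearity and the direct-sum definition of $\relIG{\t}_\p$, I would reduce to the case where $\bm{\g}$ has a single nonzero component $\g = \g^i$ supported on a single potential face $R \subset \Sk_\p$ of relative codimension $i$. Set $S := b_{\p\s}^{\Sk}(R) \subset \Sk_\s$ and $T := p_{\p\t}^{\Sk}(R) \subset \Sk_\t$, with their almost-everywhere decompositions $S \aeeq \bigsqcup_\a S_\a$ and $T \aeeq \bigsqcup_\b T_\b$ into equidimensional unions of potential faces (provided by the lemma preceding the statement), and write $b_R \colon R \to S$, $q_S \colon S \to T$, and $p_R = q_S \o b_R \colon R \to T$ for the induced maps. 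The key geometric input is the factorization
\[
\om_{R/T} = \om_{R/S} \wedge b_R^*(\om_{S/T}) \quad \text{a.e.\ on $R$,}
\]
which follows from the three defining identities $\om_R = \om_{R/S} \wedge b_R^*(\om_S)$, $\om_S = \om_{S/T} \wedge q_S^*(\om_T)$ and $\om_R = \om_{R/T} \wedge p_R^*(\om_T)$ by cancellation of the common factor $p_R^*(\om_T)$.

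Next I would apply Tonelli's theorem to the non-negative measurable function $|\g|$ on $R$ relative to the measures induced by the above forms. For almost every $z \in T$ one obtains the identity in $[0,\infty]$
\[
\int_{p_R^{-1}(z)} |\g\,\om_{R/T}|
= \int_{q_S^{-1}(z)\cap S} \Big( \int_{b_R^{-1}(y)} |\g\,\om_{R/S}| \Big) |\om_{S/T}|(y)
= \int_{q_S^{-1}(z)\cap S} |b_{\p\s!}(\g)\,\om_{S/T}|,
\]
the last equality being the very definition of $b_{\p\s!}(\g)$. Part~(1) then follows: if the left-hand side is finite for a.e.\ $z \in T$ (the hypothesis $\bm{\g} \in \relIG{\t}_\p$), Tonelli forces the inner integral $\ff(y) := \int_{b_R^{-1}(y)} |\g\,\om_{R/S}|$ to be finite for almost every $y$ in $q_S^{-1}(z)\cap S$ for a.e.\ $z \in T$, hence for almost every $y \in S$ after disintegrating the measure $\n_S$ over $T$; this is precisely $\bm{\g} \in \relIG{\s}_\p$, and the finiteness of the outer integral is precisely $b_{\p\s!}(\bm{\g}) \in \relIG{\t}_\s$. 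For part~(2), positivity of each $\g^i$ allows one to reverse the argument: given the two integrability conclusions, the iterated integral on the right is finite for a.e.\ $z$, and since $\g \ge 0$ Tonelli gives equality with the left-hand side, yielding $\bm{\g} \in \relIG{\t}_\p$.

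The main obstacle is the measure-disintegration step used in part~(1) to upgrade ``almost every $y$ in each fiber for a.e.\ $z$'' to ``almost every $y \in S$''. This requires invoking the combinatorial description of $S$ as a finite almost-disjoint union of rational polyhedral cones $S_\a$ mapping linearly to the rational polyhedral cone $T$, so that the claim reduces to the standard disintegration of Lebesgue measure along a linear surjection of Euclidean spaces, applied face by face. Everything else is essentially a bookkeeping exercise with the various form factorizations.
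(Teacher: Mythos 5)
Your proposal follows essentially the same route as the paper's proof: reduce by linearity to a single component supported on a single potential face $R$, use the factorization $|\om_{R/T}| = |\om_{R/S} \wedge b_R^*(\om_{S/T})|$ along the fibers, and then apply Fubini/Tonelli to the iterated integral to get both directions (Tonelli for the converse under $\g \ge 0$). You are somewhat more careful than the printed proof in two places — you spell out how the form factorization comes from $\om_R = \om_{R/S}\wedge b_R^*(\om_S)$, $\om_S = \om_{S/T}\wedge q_S^*(\om_T)$, $\om_R = \om_{R/T}\wedge p_R^*(\om_T)$ (noting that what really matters is the restriction to fibers, where the ambiguity in the relative forms disappears), and you flag the disintegration step needed to pass from "a.e.\ $y$ in each fiber $q_S^{-1}(z)$ for a.e.\ $z$" to "a.e.\ $y\in S$," which the paper treats as routine — but the underlying argument is the same.
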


\begin{remark}
In the setting considered in~(2), the condition that $\bm{\g} \in \relIG{\s}_\p$ 
ensures that the push-forward $b_{\p\s!}(\bm{\g})$ is well-defined. 
\end{remark}

\begin{proof}[Proof of \cref{p:Fubini-Tonelli-for-G}]
We prove the proposition component by component. Let $\g = \g^i$ be any component of $\bm{\g}$.
By linearity, we can assume that $\g$ is supported on just one potential face $R$ of codimension $i$. 
Let $b_R \colon R \to S$, $p_R \colon R \to T$, and $q_S \colon S \to T$
be the induced maps and $\om_{R/S}$, $\om_{R/T}$ and $\om_{S/T}$ the corresponding forms. 
Let $j$ denote the relative codimension of $S$. 

Assume first $\g \in \relIG{\t}_\p^i$. 
As $|\om_{R/T}| = |\om_{R/S} \wedge b_R^*(\om_{S/T})|$,
Fubini's theorem implies that
\[
\int_{b_R^{-1}(z)} |\g\,\om_{R/S}| < \infty
\]
for almost every $z \in T$, and therefore $b_{\p\s!}(\g)$ is well-defined.
Furthermore, we have
\[
\int_{q_S^{-1}(z)} |b_{\p\s!}(\g)\,\om_{R/T}| 
= \int_{q_S^{-1}(z)} \Big(\int_{b_R^{-1}(z)} |\g\,\om_{R/S}| \Big)|\om_{S/T}| 
= \int_{p_R^{-1}(z)} |\g\,\om_{R/T}| < \infty
\]
for almost every $z \in T$.
This means that $\g \in \relIG{\s}_\p^i$ 
and $b_{\p\s!}(\g) \in \relIG{\t}_\s^{(j)}$, 
which gives~(1).

Conversely, assume now that $\g \ge 0$, and that $\g \in \relIG{\s}_\p^i$ 
and $b_{\p\s!}(\g) \in \relIG{\t}_\s^{(j)}$. 
Then Tonelli's theorem implies that $\g \in \relIG{\t}_\p^{(j)}$, and this gives~(2).
\end{proof}

The next step is to extend the definition of push-forward to motivic Functions of level $\p$.
As before, let $p_{\p\s} \colon (X_\p,D) \to (Y_\s,E)$
be a morphism in $\Snc_\t$.
Recall that $\relIMG{\t}_\p = \MM_{X_\p} \otimes_{\FF_\p^\o} \relIG{\t}_\p$.
Since the inverse image of a face $\Sk_{\s,J}$ under 
$b_{\p\s}^{\Sk} \colon \Sk_\p \to \Sk_\s$ is a union of faces, pull-back along this map
defines a natural ring homomorphism $\FF_\s^\o \to \FF_\p^\o$.

\begin{lemma}
Regarding $\MM_{X_\p}$ and $\relIG{\t}_\p$ as $\FF_\s^\o$-modules
via the ring homomorphism $\FF_\s^\o \to \FF_\p^\o$, the push-forward maps 
$b_{\p\s*} \colon \MM_{X_\p} \to \MM_{Y_\s}$
and
$b_{\p\s!} \colon \relIG{\t}_\p \to \relIG{\t}_\s$
are $\FF_\s^\o$-module homomorphisms.
\end{lemma}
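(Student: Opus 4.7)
The plan is to verify both module-linearity statements by reducing, via $\R$-bilinearity, to the case where $\f$ is a generator $1_{\Sk_{\s,J}}$ of $\FF_\s^\o$. The first task is then to understand explicitly the image of $1_{\Sk_{\s,J}}$ under the ring homomorphism $\FF_\s^\o \to \FF_\p^\o$, and to match it with the pullback $b_{\p\s}^* \colon \MM_{Y_\s} \to \MM_{X_\p}$ via the embeddings $\FF^\o \inj \MM$.

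For this, I would leverage the hypothesis $\Supp(b_{\p\s}^*(E)) \subset \Supp(D)$ built into the definition of a morphism in $\Snc_\t$. Writing $b_{\p\s}^*(E_j) = \sum_i a_{ij} D_i$ with $a_{ij} \ge 0$ and setting $K(I) := \{j : a_{ij} > 0 \text{ for some } i \in I\}$, I expect two parallel decompositions: scheme-theoretically, $b_{\p\s}^{-1}(E_J^\o) = \bigsqcup_{I : K(I) = J} D_I^\o$, and combinatorially, $b_{\p\s}^{\Sk}(\Sk_{\p,I}) \subset \Sk_{\s,K(I)}$. The first yields $b_{\p\s}^*([E_J^\o]_\s) = \sum_{I : K(I) = J}[D_I^\o]_\p$ in $\MM_{X_\p}$, while the second gives $1_{\Sk_{\s,J}} \o b_{\p\s}^{\Sk} = \sum_{I : K(I) = J} 1_{\Sk_{\p,I}}$ in $\FF_\p^\o$. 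Both follow from the monomial description of $b_{\p\s}^*(E_j)$: a point of $D_I^\o$ lies in $b_{\p\s}^{-1}(E_j)$ precisely when some $i \in I$ has $a_{ij} > 0$, and the analogous statement for a quasi-monomial valuation in $\Sk_{\p,I}$ follows from the computation used in \cref{l:p^QM}. This bookkeeping is the only genuinely non-formal point and is where I expect the main work to sit; everything afterwards is formal.

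With this dictionary in hand, $\FF_\s^\o$-linearity of $b_{\p\s*}$ reduces to the projection formula for Grothendieck rings: for $\f = 1_{\Sk_{\s,J}}$ and a class $[V]_\p \in \MM_{X_\p}$, the action unfolds to $\f \cdot [V]_\p = [b_{\p\s}^{-1}(E_J^\o) \times_{X_\p} V]_\p$, and the base-change identity $b_{\p\s}^{-1}(E_J^\o) \times_{X_\p} V \cong E_J^\o \times_{Y_\s} V$ then gives $b_{\p\s*}(\f \cdot [V]_\p) = [E_J^\o \times_{Y_\s} V]_\s = [E_J^\o]_\s \cdot b_{\p\s*}([V]_\p) = \f \cdot b_{\p\s*}([V]_\p)$.

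For $\FF_\s^\o$-linearity of $b_{\p\s!}$, I would write an element of $\relIG{\t}_\p$ as a sum of pieces whose components are each supported on a single potential face $R$ contained in some $\Sk_{\p,I}$. The inclusion $b_{\p\s}^{\Sk}(R) \subset \Sk_{\s,K(I)}$ implies that $b_{\p\s!}(\g)$ is supported in the single face $\Sk_{\s,K(I)}$ of $\Sk_\s$. Multiplying by $\f = 1_{\Sk_{\s,J}}$ either leaves the component alone (if $K(I) = J$) or annihilates it (if $K(I) \ne J$), and exactly the same dichotomy applies regardless of whether the multiplication is performed before or after push-forward; thus both orders yield the same element. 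Extending by linearity completes the verification.
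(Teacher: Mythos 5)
Your proposal is correct and follows essentially the same route as the paper's proof. For the $b_{\p\s*}$ case, the paper introduces the relation $I \succeq J$ (defined geometrically by $b_{\p\s}(D_I^\o)\subset E_J^\o$), pulls $1_{\Sk_{\s,J}}$ back to $\sum_{I\succeq J}1_{\Sk_{\p,I}}$, identifies this with $[E_J^\o\times_{Y_\s}X_\p]_\p$, and concludes by push-forward; your $K(I)=J$ criterion is just the explicit combinatorial translation of $I\succeq J$ in terms of the exponents $a_{ij}$, and the rest of the computation is the same. The one notable difference is that the paper disposes of the $b_{\p\s!}$ case with a one-line ``the assertion is clear,'' whereas you supply the actual argument: a component supported on $R\subset\Sk_{\p,I}$ has push-forward supported in the single face $\Sk_{\s,K(I)}$, and multiplication by $1_{\Sk_{\s,J}}$ either preserves or kills that component on both sides of the equation. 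That extra detail is accurate and worth having written out; it is not a departure from the paper's method but a fleshing-out of the part the paper compresses.
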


\begin{proof}
The assertion is clear for $b_{\p\s!}$. If we write $I \succeq J$
whenever $b_{\p\s}(D_I^\o) \subset E_J^\o$, then we have
\begin{multline*}
b_{\p\s*}\big(b_{\p\s}^*(1_{\Sk_{\s,J}}) \.[V]_\p \big)
= b_{\p\s*}\Big( \sum_{I\succeq J} 1_{\Sk_{\p,I}}\.[V]_\p\Big) 
= b_{\p\s*}\Big( \sum_{I\succeq J} [D_I^\o]_\p \.[V]_\p\Big) = \\ 
= b_{\p\s*}\big( [E_J^\o \times_{Y_\s} X_\p]_\p \.[V]_\p\big) 
= [E_J^\o]_\s \. [V]_\s 
= 1_{\Sk_{\s,J}} \. [V]_\s. 
\end{multline*}
By linearity, this proves the assertion for $b_{\p\s*}$.
\end{proof}

\begin{lemma}
\label{l:rho}
There is a natural map 
\[
\r_{\p\s} \colon
\MM_{X_\p} \otimes_{\FF_\p^\o} \relIG{\t}_\p \to \MM_{X_\p} \otimes_{\FF_\s^\o} \relIG{\t}_\p.
\]
\end{lemma}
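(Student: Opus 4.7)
The plan is to construct $\r_{\p\s}$ by exploiting the specific algebraic structure of $\FF_\p^\o$: since the faces $\Sk_{\p,I}$ form an honest disjoint decomposition of $\Sk_\p$, the ring $\FF_\p^\o$ is generated by the characteristic functions $\{1_{\Sk_{\p,I}}\}_I$, which constitute a complete family of pairwise orthogonal idempotents ($1_{\Sk_{\p,I}} \cdot 1_{\Sk_{\p,I'}} = \d_{I,I'} 1_{\Sk_{\p,I}}$ and $\sum_I 1_{\Sk_{\p,I}} = 1$). This special feature will enable a map in the direction opposite to the canonical surjection $\MM_{X_\p} \otimes_{\FF_\s^\o} \relIG{\t}_\p \surj \MM_{X_\p} \otimes_{\FF_\p^\o} \relIG{\t}_\p$ induced by the ring homomorphism $\FF_\s^\o \to \FF_\p^\o$ from the paragraph preceding the statement.

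Concretely, I would first define $\r_{\p\s}$ via the $\R$-bilinear pairing
\[
\Phi \colon \MM_{X_\p} \times \relIG{\t}_\p \to \MM_{X_\p} \otimes_{\FF_\s^\o} \relIG{\t}_\p,
\qquad
([V]_\p, g) \mapsto \sum_I [V \times_{X_\p} D_I^\o]_\p \otimes (1_{\Sk_{\p,I}}\,g),
\]
where the sum runs over the finitely many index sets $I$. To show that $\Phi$ descends to the quotient $\MM_{X_\p} \otimes_{\FF_\p^\o} \relIG{\t}_\p$ one must verify $\FF_\p^\o$-balancedness, namely $\Phi(\phi\cdot [V]_\p, g) = \Phi([V]_\p, \phi\cdot g)$ for every $\phi \in \FF_\p^\o$. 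By $\R$-linearity this reduces to the generators $\phi = 1_{\Sk_{\p,I_0}}$. On both sides, the disjointness $D_{I_0}^\o \cap D_I^\o = \emptyset$ for $I \ne I_0$ together with the orthogonality $1_{\Sk_{\p,I}} \cdot 1_{\Sk_{\p,I_0}} = \d_{I,I_0} 1_{\Sk_{\p,I_0}}$ force all summands with $I \ne I_0$ to vanish, leaving the common value $[V \times_{X_\p} D_{I_0}^\o]_\p \otimes (1_{\Sk_{\p,I_0}}\,g)$. Hence $\Phi$ factors through the tensor product and produces the desired $\r_{\p\s}$.

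The main conceptual hurdle is simply that, for an arbitrary ring homomorphism $R \to S$ and $S$-modules $M, N$, there is no natural map $M \otimes_R N \to M \otimes_S N$; only the canonical surjection in the opposite direction exists. Our construction is therefore a section-like map that relies essentially on the specific algebraic structure of $\FF_\p^\o$, in particular on the complete orthogonal system of idempotents $\{1_{\Sk_{\p,I}}\}_I$ which provides a canonical ``normal form'' for elements of the source. Once this structural observation is made, the verification above is a short and transparent calculation using the intersection calculus of strata in $X_\p$.
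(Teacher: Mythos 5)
Your proof is correct and rests on the same key observation the paper uses: the characteristic functions $1_{\Sk_{\p,I}}$ form a complete system of orthogonal idempotents in $\FF_\p^\o$, which is what makes a map in this (a priori unnatural) direction possible. Your presentation—constructing the $\FF_\p^\o$-balanced bilinear map $\Phi$ directly and invoking the universal property of the tensor product—is a slightly more streamlined packaging of the paper's argument, which instead spells out the orthogonal-idempotent structure as explicit direct-sum decompositions of both tensor products over $\R$ and then defines the map as an inclusion of direct summands; the resulting map $[V]_\p \otimes g \mapsto \sum_I [V\times_{X_\p} D_I^\o]_\p \otimes (1_{\Sk_{\p,I}}\,g)$ is the same in both cases.
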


\begin{proof}
Note that given $r\in {\FF_\p^\o}$, it is not necessarily true that multiplication by $r$ on the left and on the right on the group $\MM_{X_\p} \otimes_{\FF_\s^\o} \relIG{\t}_\p$ give the same result, 
so more care is needed to define the map.
We first claim that there are decompositions 
\[
\MM_{X_\p} \otimes_{\FF_\p^\o} \relIG{\t}_\p\cong
 \bigoplus_I \big([D_I^\o]\cdot\MM_{X_\p}\big) \otimes_\R \big(1_{\Sk_{\p,I}}\cdot\relIG{\t}_\p\big)
\]
and 
\[
\MM_{X_\p} \otimes_{\FF_\s^\o} \relIG{\t}_\p\cong
 \bigoplus_J\Big(\bigoplus_{I\succeq J}[D_I^\o]\.\MM_{X_\p}\Big)\otimes_\R \Big(\bigoplus_{I\succeq J} 1_{\Sk_{\p,I}}\.\relIG{\t}_\p\Big).
\]
Indeed, multiplication by $1_{\Sk_{\p,I}}\in \FF_\p^\o$ gives the projection 
\[
\MM_{X_\p} \otimes_{\FF_\p^\o} \relIG{\t}_\p \to 
\big([D_I^\o]\.\MM_{X_\p}\big) \otimes_\R \big(1_{\Sk_{\p,I}}\.\relIG{\t}_\p\big), 
\]
and multiplication by
$1_{\Sk_{\s,J}}\in \FF_\s^\o$ gives the projection 
\[
\MM_{X_\p} \otimes_{\FF_\s^\o} \relIG{\t}_\p \to 
\Big(\bigoplus_{I\succeq J}[D_I^\o]\.\MM_{X_\p}\Big)\otimes_\R \Big(\bigoplus_{I\succeq J} 1_{\Sk_{\p,I}}\.\relIG{\t}_\p\Big).
\] 
Then the obvious inclusion 
\[
\bigoplus_I \big([D_I^\o]\.\MM_{X_\p}\big) \otimes_\R \big(1_{\Sk_{\p,I}}\.\relIG{\t}_\p \big) 
\subset
\bigoplus_J\Big(\bigoplus_{I\succeq J}[D_I^\o]\.\MM_{X_\p}\Big)\otimes_\R \big(\bigoplus_{I\succeq J} 1_{\Sk_{\p,I}}\.\relIG{\t}_\p\Big)
\]
defined using the isomorphism 
\[
\Big(\bigoplus_{I\succeq J}[D_I^\o]\.\MM_{X_\p}\Big)\otimes_\R \Big(\bigoplus_{I\succeq J} 1_{\Sk_{\p,I}}\.\relIG{\t}_\p\Big)
\cong
\bigoplus_{I,I'\succeq J} \big([D_I^\o]\.\MM_{X_\p}\big)\otimes_\R \big(1_{\Sk_{\p,I'}}\.\relIG{\t}_\p\big)
\]
gives us the desired map.
\end{proof}

The push-forward
\[
b_{\p\s!} 
\colon \relIMG{\t}_\p \to \relIMG{\t}_\s
\]
is defined by the composition
\[
\MM_{X_\p} \otimes_{\FF_\p^\o} \relIG{\t}_\p
\xrightarrow{\;\r_{\p\s}\;}
\MM_{X_\p} \otimes_{\FF_\s^\o} \relIG{\t}_\p
\xrightarrow{\;b_{\p\s*} \otimes b_{\p\s!}\;}
\MM_{Y_\s} \otimes_{\FF_\s^\o} \relIG{\t}_\s
\]
This is a linear map of vector spaces over $\R$. \cref{t:relIG-functorial} and the obvious functoriality of
$\r_{\p\s}$ and $b_{\p\s*}$ yields the following property. 

\begin{proposition}
\label{t:relIMG-functorial}
The assignment given on objects by $(X_\p,D) \mapsto \relIMG{\t}_\p$
and on morphisms by $b_{\p\s} \mapsto b_{\p\s!}$
defines a functor from $\Snc_\t$ to the category of vector spaces over $\R$. 
\end{proposition}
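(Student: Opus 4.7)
The plan is to reduce the claim to the already established functoriality of the two pieces out of which $b_{\p\s!}$ is built (the push-forward on Grothendieck rings and the push-forward on relatively integrable Functions), plus a compatibility statement for the maps $\rho_{\p\s}$ of \cref{l:rho} under composition of morphisms in $\Snc_\t$.

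The identity case is immediate: when $b_{\p\p} = \id_{(X_\p,D)}$, the induced map $b_{\p\p}^{\Sk}$ is the identity on $\Sk_\p$, so $b_{\p\p*}$, $b_{\p\p!}$ and $\rho_{\p\p}$ are all identity maps, and their composition is the identity on $\relIMG{\t}_\p$. For composition, fix morphisms $a_{\p'\p} \colon (X_{\p'},D') \to (X_\p,D)$ and $b_{\p\s} \colon (X_\p,D) \to (Y_\s,E)$ in $\Snc_\t$, with composite $c_{\p'\s} := b_{\p\s} \o a_{\p'\p}$. We must show $c_{\p'\s!} = b_{\p\s!} \o a_{\p'\p!}$ as maps $\relIMG{\t}_{\p'} \to \relIMG{\t}_\s$.

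Unfolding the definition, $b_{\p\s!} \o a_{\p'\p!}$ equals the composition
\[
\relIMG{\t}_{\p'}
\xrightarrow{\rho_{\p'\p}}
\MM_{X_{\p'}} \otimes_{\FF_\p^\o} \relIG{\t}_{\p'}
\xrightarrow{a_{\p'\p*}\otimes a_{\p'\p!}}
\relIMG{\t}_\p
\xrightarrow{\rho_{\p\s}}
\MM_{X_\p} \otimes_{\FF_\s^\o} \relIG{\t}_\p
\xrightarrow{b_{\p\s*}\otimes b_{\p\s!}}
\relIMG{\t}_\s,
\]
while $c_{\p'\s!}$ equals $(c_{\p'\s*}\otimes c_{\p'\s!})\o \rho_{\p'\s}$. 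Functoriality of push-forward on Grothendieck rings gives $c_{\p'\s*} = b_{\p\s*}\o a_{\p'\p*}$, and \cref{t:relIG-functorial} applied to the $\Snc_\t$-diagram $(\p'\to\p\to\s)$ gives $c_{\p'\s!} = b_{\p\s!}\o a_{\p'\p!}$ as maps on $\relIG{\t}$. Thus it suffices to prove that the maps $\rho$ fit into a commutative diagram
\[
\xymatrix{
\relIMG{\t}_{\p'} \ar[r]^-{\rho_{\p'\p}} \ar[d]_{\rho_{\p'\s}}
  & \MM_{X_{\p'}} \otimes_{\FF_\p^\o} \relIG{\t}_{\p'} \ar[d]^{a_{\p'\p*}\otimes a_{\p'\p!}} \\
\MM_{X_{\p'}} \otimes_{\FF_\s^\o} \relIG{\t}_{\p'} \ar[d]_{a_{\p'\p*}\otimes a_{\p'\p!}}
  & \relIMG{\t}_\p \ar[d]^{\rho_{\p\s}} \\
\MM_{X_\p} \otimes_{\FF_\s^\o} \relIG{\t}_\p \ar@{=}[r]
  & \MM_{X_\p} \otimes_{\FF_\s^\o} \relIG{\t}_\p
}
\]
in the following sense: reading along the right and left sides of the diagram, the two compositions from $\relIMG{\t}_{\p'}$ to $\MM_{X_\p}\otimes_{\FF_\s^\o}\relIG{\t}_\p$ agree.

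The point here is that the chain of ring homomorphisms $\FF_\s^\o \to \FF_\p^\o \to \FF_{\p'}^\o$ induced by the retraction maps is compatible with the composition $\p'\to\p\to\s$, since for any face $\Sk_{\s,J}$, the relation $1_{\Sk_{\s,J}}\o r_{\p'\s} = \sum_{I'\succeq J} 1_{\Sk_{\p',I'}}$ is obtained either directly, or by first decomposing $1_{\Sk_{\s,J}}\o r_{\p\s} = \sum_{I\succeq J} 1_{\Sk_{\p,I}}$ and then applying $\sum_{I'\succeq I}1_{\Sk_{\p',I'}}$ to each piece. Using the explicit decomposition formulas for the tensor products given in the proof of \cref{l:rho} (which split $\MM_{X_\p}\otimes_{\FF_\s^\o}\relIG{\t}_\p$ as a direct sum indexed by $J$ of pieces indexed by $I \succeq J$, and analogously for $\p'$), the commutativity of the diagram follows by identifying the index sets via the bijection $\{(J,I',I'') : I',I'' \succeq I \succeq J\}$ under transitivity of $\succeq$, and noting that both push-forwards $a_{\p'\p*}$ and $a_{\p'\p!}$ respect the corresponding projections since they are $\FF_\p^\o$-module homomorphisms (hence a fortiori $\FF_\s^\o$-module homomorphisms).

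The main obstacle is this last compatibility check: one has to verify that the non-canonical-looking map $\rho_{\p\s}$, which essentially remembers the finer stratification of $\Sk_\p$ inherited from the coarser one of $\Sk_\s$, is transitive with respect to further refinement by $\Sk_{\p'}$. This is a careful but routine bookkeeping exercise, ultimately reducing to the transitivity of the relation $\succeq$ and to the functoriality of forming direct sum decompositions along the ring maps $\FF_\s^\o \to \FF_\p^\o \to \FF_{\p'}^\o$. Once this is established, composing the functorial identities for $*$, for $!$ (from \cref{t:relIG-functorial}), and for $\rho$ gives the required equality $c_{\p'\s!} = b_{\p\s!} \o a_{\p'\p!}$, completing the proof.
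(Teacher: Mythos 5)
Your proof is correct and follows essentially the same route as the paper, which simply notes that the result follows from Proposition \ref{t:relIG-functorial} together with the ``obvious functoriality'' of $\rho_{\p\sigma}$ and $b_{\p\sigma*}$. You have usefully spelled out what that obviousness amounts to: a diagram chase through the block decompositions of Lemma \ref{l:rho} that reduces to the transitivity of $\succeq$ and the fact that $a_{\p'\p*}$ and $a_{\p'\p!}$ are $\FF_\p^\o$-module (hence $\FF_\sigma^\o$-module) homomorphisms.
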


\subsection{Integrable motivic Functions}

Still working above $\t$,
any morphism $\a \colon (X_{\p'},D') \to (X_\p,D)$ of local snc models over the same variety $X$ 
induces a push-forward homomorphism $\a_! \colon \relIMG{\t}_{\p'} \to \relIMG{\t}_\p$.
We can therefore define
\[
\relIMG{\t}(X^{\val}) := \invlim_\p \relIMG{\t}_\p.
\]
This inverse limit 
has a natural module structure over $\FF_\p^\o$ for all local snc models $\p$ above $\t$, and hence
a natural module structure over $\FF^\o(X^{\val})$. 

\begin{definition}
We call $\relIMG{\t}(X^{\val})$ the module of 
\emph{relatively integrable motivic Functions} on $X^{\val}$ \emph{over $\t$}.
When $\t = \id_Z$, we write $\relIMG{Z}(X^{\val})$. 
\end{definition}

If $\t' \ge \t$ is a higher local snc model over $Z$ then by \cref{p:Fubini-Tonelli-for-G}
we have a natural inclusion
$\relIMG{\t}(X^{\val}) \subset \relIMG{\t'}(X^{\val})$. We can therefore define
\[
\relIMG{Z^{\val}}(X^{\val}) := \dirlim_\t \relIMG{\t}(X^{\val}).
\]

\begin{definition}
We call $\relIMG{Z^{\val}}(X^{\val})$ the module of 
\emph{relatively integrable motivic Functions} on $X^{\val}$ \emph{over $Z^{\val}$}.
\end{definition}

For any dominant morphism $b \colon X \to Y$ over $Z$
and any local snc model $\t$ over $Z$, we obtain by functoriality a push-forward homomorphism
\[ 
b_! \colon \relIMG{\t}(X^{\val}) \to \relIMG{\t}(Y^{\val})
\]
defined by $b_!(\bm{g}) := (b_{\p\s!}(\bm{g}_\p))_\s$
where $\p$ is any local snc model above $\t$ such that $\bm{g}$ is determined by $\bm{g}_\p$. 
Letting now $\t$ vary, this induces a push-forward map (denoted by the same symbol)
\[
b_! \colon \relIMG{Z^{\val}}(X^{\val}) \to \relIMG{Z^{\val}}(Y^{\val}).
\]
By \cref{t:relIMG-functorial}, we obtain the following property.

\begin{theorem}
\label{t:functoriality}
The assignments $X \mapsto \relIMG{\t}(X^{\val})$ and $X \mapsto \relIMG{Z^{\val}}(X^{\val})$,
paired with the corresponding assignment $b \mapsto b_!$,
define functors from the category of varieties endowed with a 
dominant morphism to $Z$, whose morphisms are dominant morphisms defined over $Z$, 
to the category of vector spaces over $\R$. 
\end{theorem}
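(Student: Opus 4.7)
My plan is to deduce this theorem from the finite-level functoriality already established in Proposition~\ref{t:relIMG-functorial}, which takes care of all the actual content. What remains is to promote that statement from the category $\Snc_\t$ of local snc models to the category of varieties over $Z$ by passing first to an inverse limit (over $\p$) and then to a direct limit (over $\t$). The main technical point that recurs throughout is the existence, given any finite collection of local snc models, of a common high-enough refinement in $\Snc_\t$ realizing prescribed morphisms; this is a standard application of resolution of singularities to the relevant ideal sheaves (supports of pullbacks of the $E$'s and $F$).

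To begin, I would verify that $b_!(\bm g) := (b_{\p\s!}(\bm g_\p))_\s$ is well-defined on $\relIMG{\t}(X^{\val})$. Independence of the choice of $\p$ mapping to $\s$ follows by dominating any two candidates $\p_1,\p_2$ by a common refinement $\p_3$ still mapping to $\s$: the compatibility $\a_{\p_3\p_i!}(\bm g_{\p_3}) = \bm g_{\p_i}$ of the inverse limit, combined with the identity $b_{\p_i\s!}\o \a_{\p_3\p_i!} = b_{\p_3\s!}$ from Proposition~\ref{t:relIMG-functorial}, gives $b_{\p_1\s!}(\bm g_{\p_1}) = b_{\p_2\s!}(\bm g_{\p_2})$. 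Next, to see that $(b_!(\bm g)_\s)_\s$ is compatible in the inverse limit over $\s$, I would take any morphism $\b_{\s'\s} \colon (Y_{\s'},E') \to (Y_\s,E)$ and choose $\p'$ over $X$ above $\t$ high enough that both $b_{\p'\s'}$ and a morphism to some $\p$ over $\s$ exist; Proposition~\ref{t:relIMG-functorial} applied to the resulting commutative square in $\Snc_\t$ yields $\b_{\s'\s!}\o b_{\p'\s'!} = b_{\p\s!}\o \a_{\p'\p!}$, and evaluating on $\bm g_{\p'}$ gives the required compatibility.

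For the functor axioms at this level, the identity morphism $\id_X$ induces the identity morphism $\id_{(X_\p,D)}$ in $\Snc_\t$ for every $\p$, whose pushforward is the identity on $\relIMG{\t}_\p$, so $(\id_X)_! = \id$. Given two composable dominant morphisms $X \xrightarrow{c} Y \xrightarrow{b} W$ over $Z$, I would pick any $\om$ over $W$ above $\t$ and then $\s$ over $Y$ and $\p$ over $X$ above $\t$ sufficiently high for $b_{\s\om}$, $c_{\p\s}$, and $(b\o c)_{\p\om}$ to exist simultaneously in $\Snc_\t$; functoriality at the finite level gives $(b\o c)_{\p\om!} = b_{\s\om!}\o c_{\p\s!}$ on $\relIMG{\t}_\p$, which unwinds to the equality $(b\o c)_! = b_!\o c_!$ on the inverse limits.

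Finally, to handle $\relIMG{Z^{\val}}(X^{\val}) = \dirlim_\t \relIMG{\t}(X^{\val})$, I would check that the pushforward maps $b_!$ are compatible with the inclusions $\relIMG{\t}(X^{\val}) \subset \relIMG{\t'}(X^{\val})$ for $\t'\ge \t$. This is where Proposition~\ref{p:Fubini-Tonelli-for-G} does the real work: part~(1) guarantees that membership in $\relIG{\t}$ is preserved when passing to a higher $\t$, and the compatibility of the pushforwards then follows from a further application of Proposition~\ref{t:relIMG-functorial} realizing both $\t$ and $\t'$ inside a common diagram in $\Snc_{\t'}$. Once this is verified, $b_!$ descends to the direct limit and the identity and composition axioms are inherited. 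The hardest part is conceptual rather than computational: keeping track of the interplay between the dominating models for $\p/\s$ and the base models $\t/\t'$, while repeatedly invoking resolution of singularities to ensure all the required morphisms in $\Snc_\t$ (or $\Snc_{\t'}$) actually exist.
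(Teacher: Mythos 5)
Your proposal is correct and follows the same route the paper intends: the paper gives no explicit proof, stating only that the result is obtained from \cref{t:relIMG-functorial} and the inclusions $\relIMG{\t}(X^{\val}) \subset \relIMG{\t'}(X^{\val})$ furnished by \cref{p:Fubini-Tonelli-for-G}. You have correctly supplied the standard limit-diagram chase (well-definedness of $b_!$ independent of $\p$, compatibility over $\s$, identity and composition axioms, and descent to the direct limit over $\t$) that the paper leaves implicit.
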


By interpreting the push-forward $b_! \colon \relIMG{Z^{\val}}(X^{\val}) \to \relIMG{Z^{\val}}(Y^{\val})$ 
as integration along the fibers of $b^{\val} \colon X^{\val} \to Y^{\val}$, 
this result can be regarded as a Fubini theorem.

An element $\bm{g} \in \relIMG{\t}(X^{\val})$ is given by a net of motivic Functions $\bm{g} = (\bm{g}_\p)_\p$
where $\bm{g}_\p = (g_\p^0,\dots,g_\p^{n-1}) \in \relIMG{\t}_\p$ 
and $\a_!(\bm{g}_{\p'}) = \bm{g}_\p$ for $\p' \ge \p$ (here, as usual, $\a \colon X_{\p'} \to X_\p$
is the induced map). The next proposition says that, as long as we restrict to a smaller collection $\cC$
of models, we can view any element $\bm{g}$ as being given by a net $(\bm{g}_\p)_{\p \in \cC}$
of motivic Functions that are concentrated in relative codimension 0. 

\begin{proposition}
\label{p:selection-of-snc-models-C}
For every $\bm{g} \in \relIMG{\t}(X^{\val})$ there exists a collection $\cC$ of local snc models
over $X$ such that:
\begin{enumerate}
\item
for every local snc model $\p$ over $X$ there exists $\p' \in \cC$ that is a refinement of $\p$;
\item
if $\p \in \cC$ and $\p'$ is a refinement of $\p$, then $\p' \in \cC$;
\item
for every $\p \in \cC$, $\bm{g}_\p$ is concentrated in relative codimension $0$.
\end{enumerate}
\end{proposition}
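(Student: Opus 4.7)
The plan is to take $\cC := \{\p : \bm{g}_\p \text{ is concentrated in relative codimension } 0\}$. Condition~(3) then holds by construction, so the task reduces to verifying cofinality~(1) and stability under further refinements~(2).

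Both hinge on an explicit description of the push-forward $\a_! \colon \relIMG{\t}_{\p'} \to \relIMG{\t}_\p$ attached to a refinement morphism $\a \colon X_{\p'} \to X_\p$. Since $\Sk_{\p'} = \Sk_\p$ as subsets of $X^{\val}$, the induced map $\a^{\Sk}$ is the identity. Unwinding the definition in \cref{ss:PushForward} together with \cref{l:rho}, $\a_!$ reduces in this case to a \emph{codim-shifting} operation: a component of $\bm{g}_{\p'}$ at relative codimension $j$ supported on a potential face $R \subset \Sk_{\p',J} \subset \Sk_{\p,I}$ is sent to a component of $\bm{g}_\p$ at relative codimension $j + (|I|-|J|)$ on the same underlying subset $R$, now regarded as a potential face of $\Sk_\p$. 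Distinct potential faces of $\Sk_{\p'}$ remain distinct as subsets of $X^{\val}$, so contributions coming from different supporting faces of $\bm{g}_{\p'}$ are supported on disjoint potential faces of $\Sk_\p$ and cannot interact.

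Condition~(2) then follows: if $\bm{g}_\p$ is concentrated in rel codim $0$ and $\p' \ge \p$ is a refinement, any nonzero component of $\bm{g}_{\p'}$ at rel codim $j > 0$ would push forward to a nonzero contribution to $\bm{g}_\p$ at rel codim at least $j > 0$, a contradiction. For condition~(1), fix $\p_0$ and list the finitely many potential faces $\{R_1, \ldots, R_N\}$ of $\Sk_{\p_0}$ supporting the components of $\bm{g}_{\p_0}$. Using the construction in \cref{r:honest-function-on-refinement}, choose a refinement $\p' \ge \p_0$ in which every $R_k$ is realized as an actual face of $\Sk_{\p'}$. Running the codim-shifting description in reverse shows that a nonzero component of $\bm{g}_{\p'}$ supported on any potential face outside $\{R_1, \ldots, R_N\}$ would yield a nonzero contribution to $\bm{g}_{\p_0}$ on that face, which is impossible; hence $\bm{g}_{\p'}$ is supported within $\{R_1, \ldots, R_N\}$, and since each $R_k$ is an actual face of $\Sk_{\p'}$, the Function $\bm{g}_{\p'}$ is concentrated in rel codim $0$, i.e., $\p' \in \cC$.

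The main technical obstacle will be making the non-cancellation claim rigorous at the motivic level. The tensor product structure $\MM_{X_\p} \otimes_{\FF_\p^\o} \relIG{\t}_\p$ can produce identifications invisible at the function level, for instance via the vanishing of classes of the form $[V \times_k \mathbb{G}_m]$ in $\MM_{X_\p}$ modulo $\L - 1$. Tracing carefully through \cref{l:rho} and the definition of $\a_!$ on motivic Functions, I would verify that the disjointness of supports at the function level lifts to a direct-sum decomposition at the motivic level, possibly after passing to a further refinement. With this bookkeeping in place, both (1) and (2) reduce to the non-interaction argument sketched above.
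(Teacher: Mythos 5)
Your proof takes essentially the same approach as the paper. The set $\cC$ you define, namely $\{\p : \bm{g}_\p \text{ is concentrated in relative codimension } 0\}$, coincides with the paper's $\cC := \bigcup_\p \cC_\p$ (where $\cC_\p$ consists of refinements $\p'$ of $\p$ on which every supporting potential face of $\bm{g}_\p$ becomes an actual face), and all three conditions are verified through the same codimension-shifting push-forward argument: a nonzero component of $\bm{g}_{\p'}$ at positive relative codimension on a potential face $R$ would push forward to a nonzero component of $\bm{g}_\p$ on $R$, contradicting the defining property of $\cC$. The paper only spells out condition~(3) and leaves (1) and (2) implicit, but the substance is identical to what you write.

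The motivic-cancellation concern you flag is genuine, and it is worth noting that the paper's own one-line justification (``By the definition of push-forward, it would follow that $\bm{g}_\p$ is not almost-everywhere zero on $R$'') also glosses over it. The map $\a_* \colon \MM_{X_{\p'}} \to \MM_{X_\p}$ is not injective in general; for instance, the open stratum $E^\circ$ of the exceptional divisor of a point blow-up is a nontrivial class over $X_{\p'}$ but, being a $\mathbb{G}_m$ over a point of $X_\p$, its class over $X_\p$ is $(\L-1)\cdot[\mathrm{pt}] = 0$ in $\MM_{X_\p}$. The disjoint-support decomposition from \cref{l:rho} rules out cross-face interference, exactly as you say, but it does not by itself rule out coefficient-level vanishing within a single potential face. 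So this is a shared gloss rather than a defect specific to your write-up; carrying out the bookkeeping you sketch in the last paragraph would in fact strengthen the paper's argument, not merely reproduce it.
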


\begin{proof}
For every $\p$, we let $\cC_\p$ be the collection of refinements $\p'$ of $\p$
such that every potential face of $\Sk_{\p}$ on which $\bm{g}_\p$
is not almost-everywhere zero becomes a face on $\Sk_{\p'}$. 
Then $\bm{g}_{\p'}$ is concentrated in relative codimension $0$. 
If this were not the case, then there would exist a potential face $R \subset \Sk_{\p'}$
such that $\bm{g}_{\p'}$ is not almost-everywhere zero on $R$. 
By the definition of push-forward, it would follow that 
$\bm{g}_\p$ is not almost-everywhere zero on $R$, viewed as a potential face of $\Sk_\p$, 
but this would contradict our choice of $\p'$. 
The proposition follows by taking $\cC := \bigcup_\p \cC_\p$. 
\end{proof}

For every $\p$ there is a natural push-forward map
\[
t_\p
\colon \relIMG{\t}(X^{\val}) \to \relIMG{\t}_\p.
\]
When $\p = \id_X$, we write $t_X$ for $t_{\id_X}$. 

A special case occurs when we take $Z = X$ and $\t = \id_X$.
Note that $\relIMG{X}_{\id_X} = \MM_X$, hence we have a push-forward map
\[
t_X 
\colon \relIMG{X}(X^{\val}) \to \MM_X.
\]

\begin{definition}
We call 
\[
\IMG(X^{\val}) := \relIMG{X}(X^{\val})
\]
the module of \emph{integrable motivic Functions} on $X^{\val}$.
For every element $\bm{g} \in \IMG(X^{\val})$, we define
the \emph{integral} of $\bm{g}$ to be 
\[
\int_{X^{\val}} \bm{g}\, d\m_X := t_X(\bm{g}) \in \MM_X.
\]
\end{definition}

The next result relates the above definition to the definition of motivic integration
given in \cref{s:functions}. 

\begin{theorem}
\label{t:Lambda}
There is a canonical injective linear map
$\Lambda \colon \IMF(X^{\val}) \inj \IMG(X^{\val})$
such that 
\[
\int_{X^{\val}} f\,d\m_X = \int_{X^{\val}} \Lambda(f) \,d\m_X
\]
for every $f \in \IMF(X^{\val})$.
\end{theorem}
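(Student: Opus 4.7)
My plan is to construct $\Lambda$ level by level and then show the resulting family assembles into an element of the inverse limit $\IMG(X^{\val}) = \invlim_\p \relIMG{X}_\p$. At each local snc model $\p$, I would define $\Lambda_\p \colon \IMF_\p \to \relIMG{X}_\p$ as the map induced by the natural surjection $\FF_\p \twoheadrightarrow \GG_\p^0 \hookrightarrow \GG_\p$ (sending an integrable function $\f$ to the Function $(\f, 0, \ldots, 0)$ concentrated in relative codimension $0$) after tensoring with $\MM_{X_\p}$ over $\FF_\p^\o$; on generators, $[V]_\p \otimes \f$ maps to $[V]_\p \otimes (\f, 0, \ldots, 0)$. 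Integrability of $\f$ on every potential face of $\Sk_\p$ immediately gives integrability in each component in the sense of $\relIG{X}_\p$, so $\Lambda_\p$ lands in $\relIMG{X}_\p^0 \subset \relIMG{X}_\p$.

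The central technical step is to verify that the family $(\Lambda_\p(f_\p))_\p$, for $f$ represented compatibly via the pull-backs along the direct system, is compatible with the push-forward maps $\a_! \colon \relIMG{X}_{\p'} \to \relIMG{X}_\p$, so that it determines an element of $\IMG(X^{\val})$. By the functoriality of \cref{t:relIMG-functorial} and the weak factorization theorem used in the proof of \cref{t:existence-measure}, this reduces to checking the identity
\[
  \a_!\bigl(\Lambda_{\p'}(\a^* f_\p)\bigr) = \Lambda_\p(f_\p)
\]
in the case where $\a$ is a single smooth transversal blow-up. The verification is an explicit Fubini-type change-of-variable computation modeled directly on \cref{l:existence-measure-one-blowup} and the proof of \cref{t:integral-well-defined}: on the motivic side one uses the vanishing $[\mathbb{G}_m]_S = \L_S - 1 = 0$ and the identities $[\P_S^a] = a+1$ in $\MM_S$, and on the function side one uses the integration-along-the-fibers description of $\a_!$ from \cref{ss:PushForward}. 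I expect this compatibility check to be the main technical obstacle, because the codimension-$0$ strata and the new positive-relative-codimension strata created by the blow-up must combine precisely to reproduce $\Lambda_\p(f_\p)$ in $\relIMG{X}_\p$, including the Function contributions supported on the freshly created faces of $\Sk_{\p'}$ which are not potential faces of $\Sk_\p$.

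Granted the compatibility, $\Lambda(f) := (\Lambda_\p(f_\p))_\p \in \IMG(X^{\val})$ is well-defined and linear. The integral compatibility $\int_{X^{\val}} f\, d\m_X = \int_{X^{\val}} \Lambda(f)\, d\m_X$ then follows by unwinding definitions: the right-hand side is $t_X(\Lambda(f))$, and by the compatibility of $\Lambda$ with the push-forward associated to the model morphism $\p \to \id_X$ this equals $\sum_j \sum_I [V_j \times_{X_\p} D_I^\o]_X \int_{\Sk_{\p,I}} \f_j\, d\n_{\p,I}$, which is exactly the defining formula of $\int_{X^{\val}} f\, d\m_X$ from \cref{s:functions}.

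For injectivity, I would invoke \cref{p:selection-of-snc-models-C} to pass to a cofinal collection of local snc models on which the representative $f_\p$ has its function parts supported on actual faces of $\Sk_\p$; on this subclass the map $\IMF_\p \to \relIMG{X}_\p^0$ is injective, so $\Lambda(f) = 0$ forces $f_\p = 0$ for sufficiently refined $\p$, and hence $f = 0$ in the direct limit $\IMF(X^{\val})$.
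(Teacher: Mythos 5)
Your proposal follows the paper's proof essentially step for step: the same definition of $\Lambda_\p$ via the quotient $\MF_\p \to \MG_\p^0$, the same reduction (Hironaka on marked ideals plus weak factorization) to a single smooth transversal blow-up $\a$, the same reuse of the computations from \cref{l:existence-measure-one-blowup} — in particular, what makes the check close is precisely that the strata with $|J| \le |I|-2$ have motivic class zero in $\MM_{X_\p}$ (step \eqref{stra4}), so the would-be contributions in positive relative codimension vanish — and the same refinement trick for injectivity. One small caution: for a fixed $\p$, even a well-chosen one, the map $\IMF_\p \to \relIMG{X}_\p$ is never injective (the kernel always contains functions vanishing a.e.\ on actual faces but not on some potential face of positive relative codimension), and \cref{p:selection-of-snc-models-C} is stated for nets in $\relIMG{\t}(X^{\val})$, not direct-limit representatives of $\IMF$; what the argument actually uses is that for the given nonzero $f$ one picks a potential face $R$ where $f_\p$ is not a.e.\ zero and refines $\p$ so that $R$ becomes an actual face, forcing $\Lambda_\p(f_\p) \ne 0$, which is exactly what the paper does.
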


\begin{proof}
For every local snc model $\p$, we consider the linear map
$\Lambda_\p \colon \IMF_\p \to \relIMG{X}_\p$ given by $f_\p \mapsto \bm{g}_\p = (g_\p,0,\dots,0)$
where $g_\p$ is the image of $f_\p$ via the quotient map $\MF_\p \to \MG_\p^0$.
We claim that for every pair of snc models $\p' \ge \p$, if 
$\a \colon X_{\p'} \to X_\p$ is the induced map then the diagram 
\[
\xymatrix{
\IMF_{\p'} \ar[r]^-{\Lambda_{\p'}} & \relIMG{X}_{\p'} \ar[d]^{\a_!} \\
\IMF_\p \ar[r]^-{\Lambda_\p} \ar[u]_{\a^*} & \relIMG{X}_\p
}
\]
is commutative. Note that this implies that the maps $\Lambda_\p$ glue together to give a linear
map $\Lambda \colon \IMF(X^{\val}) \to \IMG(X^{\val})$. The stated equality between integrals
can then be easily checked using $\Lambda_\p$ for any $\p \gg 1_X$. 

To prove the claim, we follow a strategy similar to the proofs of 
\cref{t:existence-measure,t:integral-well-defined}. 
The first step is to check that the diagram commutes
whenever $\a$ is a smooth transversal blow-up. 
By linearly, we can assume that $f$ is represented by an element 
$f_\p = [V]_\p \otimes \f_\p$ where $\f_\p$ is supported on a face $\Sk_{\p,I}$. 
Let $C$ be the center of blow-up. 

Suppose first that $C \not\subset \Supp(D)$. 
This corresponds to \cref{case:t=0} in the proof of \cref{l:existence-measure-one-blowup}.
Note that $C \not\subset D_i$ for any $i \in I$.
As in the proof of \cref{l:existence-measure-one-blowup}, let 
$c = \codim(C,X_\p)$ and set $C^\o := C \cap D_I^\o$, 
which we assume not to be empty. Note that $\codim(C_I^\o,D_I^\o) = c$. 
We denote by $D_0'$ the exceptional divisor of $\a$ 
and, for $i \in I$, let $D_i'$ be the proper transform of $D_i$ on $X_\p'$.
We denote $\~I := I \cup \{0\}$.
Note that $\a$ induces an isomorphism $D'^\o_I \simeq D_I^\o\setminus C^\o_I$
and a $\P^{c-1}$-bundle fibration $D'^\o_{\~I} \to C^\o_I$. 
Setting $\f_{\p'} := \f_\p \o r_{\p'\p}$, we can write
\[
f_\p = [V \times_{X_\p}(D_I^\o \setminus C_I^\o)]_\p \otimes \f_\p + [V \times_{X_\p} C_I^\o]_\p \otimes \f_\p.
\]
Then
\[
\a^*(f_\p) = [V \times_{X_\p}D'^\o_I]_{\p'} \otimes \f_{\p'} + [V \times_{X_\p} D_{\~I}^\o]_{\p'} \otimes \f_{\p'},
\]
and we have $\Lambda_{\p'}(\a^*(f_\p))^0 = \a^*(f_\p)$ in $\MG_\p^0$, 
all other entries of $\Lambda_{\p'}(\a^*(f_\p))$ being zero.
We compute the push-forward term by term. We have
\[
\a_!([V \times_{X_\p}D'^\o_I]_{\p'} \otimes \f_{\p'}) = [V \times_{X_\p}(D_I^\o \setminus C_I^\o)]_\p \otimes \f_\p
\]
and
\[
\a_!([V \times_{X_\p} D_{\~I}^\o]_{\p'} \otimes \f_{\p'})
= c[V \times_{X_\p} C_I^\o] \otimes \int_0^\infty \f_{\p'}\, e^{-cx}dx
= [V \times_{X_\p} C_I^\o]_\p \otimes \f_\p
\]
in $\MG_\p^0$.
This shows that $\a_!(\Lambda_{\p'}(\a^*(f_\p))) = \Lambda_\p(f_\p)$, as desired.

Next, we consider the case where $C \subset \Supp(D)$, 
which corresponds to \cref{case:t-neq-0} in the proof of \cref{l:existence-measure-one-blowup}.
The term $[V \times_{X_\p}(D_I^\o \setminus C_I^\o)]_\p$
behaves as before, so we can focus on the term $[V \times_{X_\p} C_I^\o]_\p \otimes \f_\p$. 
Letting $J \subset I$ and denoting $\~J := J \cup \{0\}$, we have 
\[
\a^*([V \times_{X_\p} C_I^\o]_\p \otimes \f_\p) = 
\sum_{\~J \succeq I} [V \times_{X_\p} D_{\~J}^\o]_{\p'} \otimes \f_{\p'}
\]
in $\MG_\p^0$. 
Similar computations as above, using now \eqref{stra1}--\eqref{stra3} from the proof of 
\cref{l:existence-measure-one-blowup}, show that
\[
\a_!\Big(\sum_{\~J \succeq I,\,|J|\ge |I|-1} [V \times_{X_\p} D_{\~J}^\o]_{\p'} \otimes \f_{\p'}\Big) 
= [V \times_{X_\p} C_I^\o]_\p \otimes \f_\p
\]
in $\MG_\p^0$. 
In general, the terms in the sum $\sum_{\~J \succeq I,\,|J|\le |I|-2} [V \times_{X_\p} D_{\~J}^\o]_{\p'} \otimes \f_{\p'}$
can be nonzero and can be supported on unions of faces of $\Sk_{\p'}$ that come from potential 
faces of $\Sk_\p$ of positive relative codimension, so in principle
they could produce nonzero entries in $\a_!(\Lambda_{\p'}(\a^*(f_\p))$
beyond the first vector component. This however does not occur, as
\eqref{stra4} from the proof of \cref{l:existence-measure-one-blowup}
implies that all the terms in this sum belong to the kernel of $\a_!$.
This verifies the commutativity of the diagram when $C \subset \Supp(D)$. 

Now that we know that the diagram commutes whenever $\a$ is a smooth transversal blow-up, 
it follows by functoriality that the diagram commutes whenever $\a$ is a composition 
of such blow-ups. It therefore suffices to reduce to this case.
Using Hironaka's resolution of marked ideals, we first reduce to the situation where $\p$
and $\p'$ are snc models and $\a$ induces an isomorphism 
$X_{\p'} \setminus D' \cong X_\p \setminus D$. Then, using the weak factorization theorem, 
we obtain a diagram
\[
\xymatrix@C=15pt@R=18pt{
& X_{\p_1'} \ar[dl]_{p_1}\ar[dr]^{q_1} && X_{\p_2'}  \ar[dl]_{p_2}\ar[dr]^{q_2} 
&&&& X_{\p_n'} \ar[dl]_{p_n}\ar[dr]^{q_n} & \\
X_{\p'} && X_{\p_1} && X_{\p_2} & \dots & X_{\p_{n-1}} && X_\p
}
\]
where $p_i$ and $q_i$ are compositions of smooth transversal blow-ups 
and the induced rational maps $\a_i \colon X_{\p_i} \rat X_\p$ are morphisms. 
By the above discussion, the diagram commutes 
with either $p_i$ or $q_i$ in place of $\a$.
If $n=1$, then we have $\a \o p_1 = q_1$, 
and we can use functoriality and the commutativity of the diagram for $p_1$ and $q_1$
to conclude that the diagram commutes for $\a$. In more details, we have
\[
\Lambda_\p 
= q_{1!} \o \Lambda_{\p_1'} \o q_1^* 
= \a_! \o p_{1!} \o \Lambda_{\p_1'} \o p_1^* \o \a^* 
= \a_! \o \Lambda_{\p'} \o \a^*,
\]
which proves the assertion.
We can therefore use induction on $n \ge 1$ and assume that the diagram commutes for $\a_1$. 
By functoriality, this implies that it commutes for $\a_1 \o q_1$. 
Observing that $\a \o p_1 = \a_1 \o q_1$, we conclude by the same argument just used in the 
case $n=1$ that
the diagram commutes for $\a$. This proves the claim. 

It remains to check that $\Lambda$ is injective. 
Let $f \in \IMF(X^{\val})$ be any nonzero element,
and let $\p$ be a model such that $f$ is determined by $f_\p \in \IMF_\p$. 
Then $f_\p \ne 0$, so there is a potential face $R$ of $\Sk_\p$
such that $f_\p|_R$ is not almost everywhere zero with respect to the measure $\om_R$. 
After replacing $\p$ with a higher model, we may assume without loss
of generality that $R$ is an actual face of $\Sk_\p$. 
Then $f_\p$ is non-zero in $\MG_\p^0$, hence $\Lambda(f) \ne 0$.
\end{proof}

\begin{remark}
We do not know if $\IMF(X^{\val})$ is stable under push-forwards. 
More precisely, given a dominant morphism of varieties $b \colon X \to Y$ over $Z$
we do not know if the push-forward 
$b_! \colon \relIMG{Z^{\val}}(X^{\val}) \to \relIMG{Z^{\val}}(Y^{\val})$
sends $\Lambda(\IMF(X^{\val})) \cap \relIMG{Z^{\val}}(X^{\val})$ into 
$\Lambda(\IMF(Y^{\val}))$.
\end{remark}

It might be helpful at this point to work out a simple example.

\begin{example}
\label{e:ramified-cover}
Let $b \colon X = \A^1 \to Y = \A^1$ be the ramified cover given by $v = u^2$, 
with ramification point $P \in X$ and branch point $Q \in Y$.
Consider the models $\p$ given by $(X,P)$ and $\s$ given by $(Y,Q)$.
Notice that even though these models are identities on the underlying varieties, 
they are different from $\id_X$ and $\id_Y$ as they have non-empty
boundaries.
We consider the motivic function $f$ determined 
at level $\p$ by $f_\p = [P]\otimes 1$, 
and view it as a motivic Function via $\Lambda$.
Note that $f$ is supported on the inverse image in $X^{\val}$ of
the 1-dimensional face $R$ of $\Sk_\p$.
Let $x$ be the coordinate on $R$ such that $\val_P$ has coordinate $x=1$.
Similarly, let $S$ be the 1-dimensional face of $\Sk_\s$
with coordinate $y$ such that $\val_Q$ has coordinate $y=1$.
The function $b_R \colon R \to S$ is given by $y=2x$.
The volume forms on these faces are $\om_R = e^{-x}dx$ and $\om_S = e^{-y}dy$, 
hence $\om_{R/S} = \tfrac 12 e^x$. Then $b_!(f)$ is represented at level $\s$ by
\[
b_{\p\s!}(f_\p) = b_{\p\s*}[P] \otimes b_{\p\s!}(1) = [Q] \otimes \tfrac 12 e^{\frac y2}.
\]
We have
\[
t_Y(b_!(f)) = \int_{Y^{\val}} b_!(f)\, d\m_Y 
= [Q] \int_0^\infty \tfrac 12 e^{\frac y2 - y}dy = [Q].
\]
Note that we also have
\[
b_*(t_X(f)) = b_*\Big( \int_{X^{\val}} f\, d\m_X\Big) 
= b_*\Big([P] \int_0^\infty e^{-x}dx\Big) = [Q],
\]
as expected by functoriality.
\end{example}

\subsection{Projection formula}
\label{ss:ProjectionFormula}

Recall that, given a local snc model $(X_\p,D)$, 
there is a natural $\FF_\p$-module structure on $\GG_\p$ given by multiplication.
It is easy to see that the following proposition holds.

\begin{proposition}
Let $b_{\p\s} \colon (X_\p,D) \to (Y_\s,E)$ be a morphism in the category $\Snc_\t$,
and let $\f \in \FF_\s$ and $\bm{\g} \in \relIG{\t}_\p$.
If $b_{\p\s}^*(\f)\.\bm{\g} \in \relIG{\t}_\p$, then 
$\f\.b_{\p\s!}(\bm{\g}) \in \relIG{\t}_\s$ and
\[
b_{\p\s!}(b_{\p\s}^*(\f)\.\bm{\g}) = \f\.b_{\p\s!}(\bm{\g}).
\]
\end{proposition}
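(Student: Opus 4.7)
The plan is to reduce to a pointwise computation on a single potential face and then invoke the definitions of pull-back and push-forward directly. By the bilinearity of the product and the linearity of $b_{\p\s!}$, it suffices to verify the identity for an element $\bm{\g} \in \relIG{\t}_\p$ whose only nonzero component is a single $\g = \g^i$ supported on a single potential face $R \subset \Sk_\p$ of relative codimension $i$. Write $S \subset \Sk_\s$ for the image of $R$ under the map $b_{\p\s}^{\Sk} \colon \Sk_\p \to \Sk_\s$, and $b_R \colon R \to S$ for the induced map. As in Section \ref{ss:MotivicFunctions}, we have $S \aeeq \bigsqcup_\a S_\a$ with each $S_\a$ a potential face of $\Sk_\s$ of common relative codimension $j$, and there is a chosen form $\om_{R/S}$ on $R$ whose restriction to any fiber $b_R^{-1}(y)$ is a top form.

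The key observation is that the restriction of $b_{\p\s}^*(\f)$ to $R$ agrees, almost-everywhere, with the composition $\f \o b_R$. Indeed, by construction (see Section \ref{ss:category-of-models}), $b_{\p\s}^*(\f)$ is defined as $\f \o b_{\p\s}^{\Sk}$, and Lemma \ref{l:p^QM} ensures that this pull-back is compatible with the retractions in a way that is consistent with the face decomposition. In particular, $b_{\p\s}^*(\f)$ is constant along each fiber of $b_R$, taking the value $\f(y)$ on $b_R^{-1}(y)$. Therefore, for almost every $y \in S$,
\[
\int_{b_R^{-1}(y)} |b_{\p\s}^*(\f)\. \g\,\om_{R/S}| = |\f(y)| \int_{b_R^{-1}(y)} |\g\,\om_{R/S}|,
\]
and removing the absolute values,
\[
\int_{b_R^{-1}(y)} b_{\p\s}^*(\f)\. \g\,\om_{R/S} = \f(y)\int_{b_R^{-1}(y)} \g\,\om_{R/S} = \f(y)\. b_{\p\s!}(\g)(y).
\]
Recognising the left-hand side as the defining expression for $b_{\p\s!}(b_{\p\s}^*(\f)\.\g)(y)$ yields the desired identity of functions on $S$.

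For the integrability assertion, the hypothesis $b_{\p\s}^*(\f)\.\bm{\g} \in \relIG{\t}_\p$ combined with \cref{p:Fubini-Tonelli-for-G}(1) gives $b_{\p\s!}(b_{\p\s}^*(\f)\.\bm{\g}) \in \relIG{\t}_\s$, and the pointwise identity just established transfers this integrability to $\f\.b_{\p\s!}(\bm{\g})$. There is no substantive obstacle here: the entire argument is a one-line application of the definitions once the fact that $b_{\p\s}^*(\f)$ is fiberwise constant along $b_R$ has been recorded. The mild bookkeeping concern — namely ensuring that multiplication by $\f \in \FF_\s$ (respectively $b_{\p\s}^*(\f) \in \FF_\p$) preserves the relative codimension grading on $\GG_\s$ (respectively $\GG_\p$) — is immediate, because multiplying a function supported on a potential face by any function only shrinks the support.
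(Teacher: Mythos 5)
The paper offers no proof for this proposition (it is prefaced only with ``It is easy to see that the following proposition holds''), so your task was to fill in the argument, and you have done so correctly with the natural approach: reduce by linearity to a single component $\g^i$ supported on one potential face $R$, observe that $b_{\p\s}^*(\f) = \f\o b_{\p\s}^{\Sk}$ restricts on $R$ to $\f\o b_R$ and is therefore constant on the fibers $b_R^{-1}(y)$, factor $\f(y)$ out of the fiber integral defining $b_{\p\s!}$, and close the integrability loop with \cref{p:Fubini-Tonelli-for-G}(1).

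Two small stylistic points, neither a gap. First, the phrase ``removing the absolute values'' is a little loose: the equality without absolute values is not deduced from the one with them, but is justified by the same fiberwise-constancy argument applied directly to the signed integrand. Second, the appeal to \cref{l:p^QM} is not actually load-bearing here — that lemma is what makes $b^*$ well-defined on the direct limit over models, whereas at a fixed level $\p$ the only fact you need is the definitional identity $b_{\p\s}^*(\f)|_R = \f\o b_R$; mentioning it does no harm but adds no content. The remark at the end, that multiplication by $\f$ (or its pull-back) can only shrink supports and therefore respects the relative-codimension grading of $\GG_\p$ and $\GG_\s$, is exactly the bookkeeping that needed to be recorded and is correctly dispatched.
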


There is also a natural $\MF_\p$-module structure on $\MG_\p$.
In concrete terms, the module structure 
is given by setting, for any $f = \sum_i[V_i]_\p \otimes \f_i \in \MF_\p$
and $\bm{g} = \sum_j[W_j]_\p \otimes \bm{\g}_j \in \MG_\p$, 
\[
f\.\bm{g} := \sum_{i,j}[V_i \times_{X_\p} W_j]_\p \otimes (\f_i\bm{\g}_j)
\]
where $\f_i \bm{\g}_j := (\f_i\g_j^{0},\dots, \f_i\g_j^{n-1})$.
As a consequence of the previous proposition, we obtain the following formula.

\begin{proposition}
\label{l:ProjectionFormula}
Let $b_{\p\s} \colon (X_\p,D) \to (Y_\s,E)$ be a morphism in the category $\Snc_\t$,
and let $f \in \MF_\s$ and $\bm{g} \in \relIMG{\t}_\p$.
If $b_{\p\s}^*(f)\.\bm{g} \in \relIMG{\t}_\p$, then 
$f\.b_{\p\s!}(\bm{g}) \in \relIMG{\t}_\s$ and
\[
b_{\p\s!}(b_{\p\s}^*(f)\.\bm{g}) = f\.b_{\p\s!}(\bm{g}).
\]
\end{proposition}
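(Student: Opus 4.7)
The plan is to reduce the statement to the preceding (non-motivic) projection formula via a direct computation on elementary tensors. By $\R$-bilinearity of both sides of the claimed identity, I may assume $f = [V]_\s \otimes \f$ for a $Y_\s$-scheme $V$ and $\f \in \FF_\s$, and $\bm{g} = [W]_\p \otimes \bm{\g}$ for an $X_\p$-scheme $W$ and $\bm{\g} \in \relIG{\t}_\p$. Unwinding the definitions of $b_{\p\s}^* \colon \MF_\s \to \MF_\p$ and of the $\MF_\p$-module structure on $\MG_\p$, and using the canonical identification $(V \times_{Y_\s} X_\p) \times_{X_\p} W = V \times_{Y_\s} W$, the first step is to establish
\[
b_{\p\s}^*(f) \. \bm{g} = [V \times_{Y_\s} W]_\p \otimes \big(b_{\p\s}^*(\f) \. \bm{\g}\big),
\]
where $V \times_{Y_\s} W$ is viewed as an $X_\p$-scheme via its projection to $W$.

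Next I apply $b_{\p\s!} = (b_{\p\s*} \otimes b_{\p\s!}) \o \r_{\p\s}$. The integrability hypothesis $b_{\p\s}^*(f) \. \bm{g} \in \relIMG{\t}_\p$, read through the block decomposition of $\MM_{X_\p} \otimes_{\FF_\p^\o} \relIG{\t}_\p$ exhibited in the proof of \cref{l:rho}, translates to $b_{\p\s}^*(\f) \. \bm{\g} \in \relIG{\t}_\p$, which is exactly the hypothesis of the preceding proposition applied to the function factor. Combined with $b_{\p\s*}([V \times_{Y_\s} W]_\p) = [V \times_{Y_\s} W]_\s$ on the motivic factor, this yields
\[
b_{\p\s!}(b_{\p\s}^*(f) \. \bm{g}) = [V \times_{Y_\s} W]_\s \otimes \big(\f \. b_{\p\s!}(\bm{\g})\big).
\]
On the other hand, by definition $b_{\p\s!}(\bm{g}) = [W]_\s \otimes b_{\p\s!}(\bm{\g})$, and the $\MF_\s$-module structure on $\MG_\s$ gives immediately
\[
f \. b_{\p\s!}(\bm{g}) = [V \times_{Y_\s} W]_\s \otimes \big(\f \. b_{\p\s!}(\bm{\g})\big),
\]
matching the previous expression; in particular this element lies in $\relIMG{\t}_\s$.

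The one place where I expect genuine care to be required, and which I would single out as the main bookkeeping obstacle, is verifying that $\r_{\p\s}$ acts transparently on such elementary tensors, i.e.\ that reindexing from $\otimes_{\FF_\p^\o}$ to $\otimes_{\FF_\s^\o}$ sends $[V \times_{Y_\s} W]_\p \otimes \big(b_{\p\s}^*(\f) \. \bm{\g}\big)$ to the \emph{same} elementary tensor in the coarser module. This is precisely what the block decomposition in \cref{l:rho} delivers: each face $\Sk_{\p,I}$ dominates a unique face $\Sk_{\s,J}$ under $b_{\p\s}^{\Sk}$, so the regrouping is canonical and componentwise, and the integrability hypothesis is respected face by face. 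Once this compatibility is checked, the rest of the argument is routine bookkeeping with no new analytic content beyond what the preceding proposition already supplies.
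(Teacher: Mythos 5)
Your overall strategy — reduce by bilinearity to elementary tensors and invoke the preceding non-motivic projection formula on the function factor — is precisely what the paper's one-line proof intends, so the plan is right. However, there is a genuine computational error in the key step. You assert that by definition $b_{\p\s!}(\bm{g}) = [W]_\s \otimes b_{\p\s!}(\bm{\g})$ for an elementary tensor $\bm{g} = [W]_\p \otimes \bm{\g}$, and hence that both sides of the projection formula collapse to $[V\times_{Y_\s}W]_\s\otimes\big(\f\cdot b_{\p\s!}(\bm{\g})\big)$. This is not what the definition gives: the map $\r_{\p\s}$ is \emph{not} transparent on elementary tensors. It is the diagonal inclusion into the block decomposition of \cref{l:rho}, so
\[
\r_{\p\s}\big([W]_\p\otimes\bm{\g}\big) = \sum_I \big([D_I^\o]_\p\cdot[W]_\p\big)\otimes\big(1_{\Sk_{\p,I}}\cdot\bm{\g}\big),
\]
whereas the elementary tensor $[W]_\p\otimes\bm{\g}$, expanded inside $\MM_{X_\p}\otimes_{\FF_\s^\o}\relIG{\t}_\p$, carries \emph{all} the off-diagonal $(I,I')$-components with $I,I'\succeq J$, not just the diagonal ones. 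Consequently the correct expression for the push-forward is
\[
b_{\p\s!}\big([W]_\p\otimes\bm{\g}\big) = \sum_I \big[D_I^\o\times_{X_\p}W\big]_\s\otimes b_{\p\s!}\big(1_{\Sk_{\p,I}}\bm{\g}\big),
\]
which differs from $[W]_\s\otimes b_{\p\s!}(\bm{\g})$ by cross terms in the $J$-blocks (write $[E_J^\o\times_{Y_\s}W]_\s = \sum_{I\succeq J}[D_I^\o\times_{X_\p}W]_\s$ and compare). The projection formula itself is nevertheless correct, but one must track these blocks: applying the same expansion to the left-hand side, using $(V\times_{Y_\s}X_\p)\times_{X_\p}W \cong V\times_{Y_\s}W =: A$ and the function-level projection formula on each block, gives $\sum_I [D_I^\o\times_{X_\p}A]_\s\otimes\big(\f\cdot b_{\p\s!}(1_{\Sk_{\p,I}}\bm{\g})\big)$; the right-hand side $f\cdot b_{\p\s!}(\bm{g})$ also evaluates to exactly this block sum via $V\times_{Y_\s}(D_I^\o\times_{X_\p}W)\cong D_I^\o\times_{X_\p}A$. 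So the two sides agree, but not via the simplified intermediate identity you wrote. Similarly, the claim that the integrability hypothesis ``translates to $b_{\p\s}^*(\f)\cdot\bm{\g}\in\relIG{\t}_\p$'' only holds block by block and only on blocks where the motivic factor is nonzero; this is harmless for the argument but should be stated carefully.
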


\begin{remark}
When $Z = Y$ and $\t = \s$, the hypothesis that 
$b_{\p\s}^*(f)\.\bm{g} \in \relIMG{\s}_\p$ is automatically satisfied.
\end{remark}

Let now 
$b \colon X \to Y$ 
be a dominant morphism of varieties.
The proposition allows us to define a natural $\MF(Y^{\val})$-module structure on $\relIMG{Y^{\val}}(X^{\val})$.
Indeed, given any $f \in \MF(Y^{\val})$ and $\bm{g} = (\bm{g}_\p)_\p \in \relIMG{Y^{\val}}(X^{\val})$, 
we can fix a high enough local snc model $\s$ over $Y$ such that
$f$ is represented by $f_\s \in \MF_\s(Y^{\val})$ and $\bm{g} = (\bm{g}_\p)_\p \in \relIMG{\s}(X^{\val})$. 
Then we define the action
\[
f\.\bm{g} 
:= (b_{\p\s}^*(f_\p)\.\bm{g}_\p)_\p
\]
The representatives $b_{\p\s}^*(f_\p)\.\bm{g}_\p$ in the right-hand-side are well-defined
for all high enough local snc models $\p$. It is convenient here to remember in the notation of the action
the pull-back map $b^*$, and hence denote the action of $f$ on $\bm{g}$ by $b^*(f)\.\bm{g}$.
\cref{l:ProjectionFormula} yields the following projection formula.

\begin{proposition}
Let $b \colon X \to Y$ be a dominant morphism of varieties.
Then for every $f \in \MF(Y^{\val})$ and $\bm{g} \in \relIMG{Y^{\val}}(X^{\val})$ we have
\[
b_!(b^*(f)\.\bm{g}) = f\.b_!(\bm{g}).
\]
\end{proposition}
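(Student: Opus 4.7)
The plan is to reduce the global identity to the finite-level projection formula already established in \cref{l:ProjectionFormula}, so that no new analytic input is required; the statement will follow by unpacking the definitions of $b^*$, $b_!$, and the module action on the two sides.

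First, I would fix a local snc model $\s$ over $Y$ large enough that $f$ is represented by some $f_\s \in \MF_\s$ and that $\bm{g}$ lies in $\relIMG{\s}(X^{\val}) = \invlim_\p \relIMG{\s}_\p$, where the inverse limit runs over local snc models $\p$ over $X$ above $\s$. Thus $\bm{g}$ is given by a compatible family $(\bm{g}_\p)_\p$ with $\bm{g}_\p \in \relIMG{\s}_\p$.

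Next, I would unpack both sides of the claimed identity at level $\s$. By the definition of $b^*$ and of the $\MF(Y^{\val})$-module structure on $\relIMG{Y^{\val}}(X^{\val})$ recalled immediately before the statement, the product $b^*(f)\.\bm{g}$ is represented by the compatible family $(b_{\p\s}^*(f_\s)\.\bm{g}_\p)_\p$; using the defining identity $b_!(\bm{g})_\s = b_{\p\s!}(\bm{g}_\p)$ for any $\p$ above $\s$, it follows that $b_!(b^*(f)\.\bm{g})$ is represented at level $\s$ by $b_{\p\s!}(b_{\p\s}^*(f_\s)\.\bm{g}_\p)$, while $f\.b_!(\bm{g})$ is represented at the same level by $f_\s\.b_{\p\s!}(\bm{g}_\p)$.

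At this point I would invoke \cref{l:ProjectionFormula} with $Z = Y$ and $\t = \s$: its hypothesis $b_{\p\s}^*(f_\s)\.\bm{g}_\p \in \relIMG{\s}_\p$ is automatic by the remark following that proposition, and its conclusion yields $b_{\p\s!}(b_{\p\s}^*(f_\s)\.\bm{g}_\p) = f_\s\.b_{\p\s!}(\bm{g}_\p)$ for every $\p$ above $\s$. Hence the two representatives coincide, proving the identity in $\relIMG{Y^{\val}}(Y^{\val})$. The main obstacle here is not analytic but organizational: all the substantive computation is already contained in \cref{l:ProjectionFormula}, and one needs only to verify that the chosen representatives behave well under refinement, which is ensured by the functoriality of push-forward (\cref{t:relIMG-functorial}) and of $b^*$.
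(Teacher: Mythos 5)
Your proof is correct and takes essentially the same approach as the paper, which states that \cref{l:ProjectionFormula} directly yields this proposition via the definitions of $b^*$, $b_!$, and the module action. One small imprecision: since both sides live in $\relIMG{\s}(Y^{\val}) = \invlim_{\s'} \relIMG{\s}_{\s'}$, you should apply \cref{l:ProjectionFormula} at every level $\s' \ge \s$ over $Y$ (with $\t = \s$ fixed and $b_{\p\s'}$ as the morphism), not just at level $\s$ itself; this is immediate because $b_{\p\s'}^*(f_{\s'}) = b_{\p\s}^*(f_\s)$, so the same hypothesis applies, but comparing only the $\s$-component does not by itself prove equality in the inverse limit.
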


\section{Atomic approach}
\label{s:atomic}

An atomic measure on the Berkovich space of a variety 
is introduced in this last section as a parallel way of doing motivic
integration on Berkovich spaces that resembles more closely the classical
motivic integration on arc spaces and the theory of
motivic constructible functions introduced in \cite{CL08}. 
Contrary to the theory developed in the previous sections, 
this approach does not allow to integrate real valued functions, and 
ultimately produces a more restrictive algebra of measurable sets.
The trade off is that it does not require to work modulo $\L-1$ 
and as a result it fully recovers the usual motivic integral. 
This part, which still relies on geometric properties such as resolution of singularity
and weak factorization, also uses some preliminary results from \cite{CL08}.

\subsection{Atomic motivic measure}

As before, let $X$ be a variety over
an algebraically closed field $k$ of characteristic zero equipped with the trivial norm. 
We consider the subspace $X^{\val}(\Z) \subset X^{\val}$ consisting of valuations 
with center in $X$ and values in $\Z$. 
Elements of this space are considered as maps $v \colon k(X)^\times \to \Z$
which are not assumed to be surjective. 
In particular, we distinguish between multiples $mv$ of the same valuation $v$. 

For every local snc model $\p \colon (X_\p,D) \to X$, 
we let $\Sk_\p^\Z := \Sk_\p \cap X^{\val}(\Z)$. This is the set of $\Z$-valued quasi-monomial
valuations determined at level $\p$. 
We denote by $r_\p^\Z \colon X^{\val}(\Z) \to \Sk_\p^\Z$ the restriction of $r_\p$.  
We have $X^{\val}(\Z) = \invlim_\p \Sk_\p^\Z$. 
Writing $D = \sum_{i=1}^rD_i$, we have a decomposition $\Sk_\p = \bigsqcup_I \Sk_{\p,I}$
parameterized by subsets $I \subset \{1,\dots,r\}$. 
On each face $\Sk_{\p,I} \subset \Sk_\p$, 
the intersection $\Sk_{\p,I}^\Z := \Sk_\p^\Z \cap \Sk_{\p,I}$
coincides with the inverse image of the lattice $\Z^s \subset \R^s$
via the isomorphism $\ff_{\p,I} \colon \Sk_{\p,I} \xrightarrow\sim \R^s_{>0}$,
where $s = |I|$.
In particular, this map gives an embedding 
$\ff_{\p,I}^\Z \colon \Sk_{\p,I}^\Z \inj \Z^s$.
This allows us to transfer some of the notions introduced in \cite{CL08} to this setting 
by carrying over the Presburger's language $\bL_\PR$ from 
$\Z^s$ to $\Sk_{\p,I}^\Z$. The theory of Presburger arithmetic has language
\[
\bL_\PR=\{+,-,0,1,\leq\}\cup\{\equiv_n | n\in\mathbb{N}\},
\]
with $\equiv_n$ the equivalence relation modulo $n$ (for more details, see \cite[Chapter~3]{Mar02}). Here we only consider the model $\mathbb{Z}$ for the theory. Note that the multiplication operation is not included in the language. Elimination of quantifiers holds in this setting, and in fact we will be relying on a cell decomposition theorem due to \cite{Clu03}.
 
For short, we set $\L := \L_X$. 
As in \cite{CL08}, we consider the ring
\[
\A_X := \Z[\L,\L^{-1},(1-\L^{-i})^{-1}]_{i\ge 1},
\]
and work with the following version of motivic ring:
\[
\MA_X := K_0(\Var_X)\otimes_{\Z[\L]} \A_X.
\]
Note that $\MA_X \cong K_0(\Var_X)[\L,\L^{-1},(1-\L^{-i})^{-1}]_{i\ge 1}$.

We consider the collection $\S_\PR(X^{\val}(\Z))$ of all subsets of the form
$(r_\p^\Z)^{-1}(S)$ where $\p$ is a local snc model over $X$
and $S \subset \Sk_\p^\Z$ an $\bL_\PR$-definable subset, 
by which we mean that $S_I := S \cap \Sk_{\p,I}^\Z$
is $\bL_\PR$-definable in $\Sk_{\p,I}^\Z$ for all $I$.
This will be the collection of measurable sets. 
It is clear that $\S_\PR(X^{\val}(\Z))$ is closed under finite unions, finite intersections, and complements.
Note that the same element of $\S_\PR(X^{\val}(\Z))$ can be written in different ways, by choosing different models $\p$.

\begin{theorem}
\label{t:atomic-measure}
There is a well-defined function $\m_X^\Z \colon \S_\PR(X^{\val}(\Z)) \to \MA_X$ given by setting
\[
\m_X^\Z\big((r_\p^\Z)^{-1}(S)\big) := \sum_I[D_I^\o] \big(\L-1\big)^{|I|} 
\sum_{S_I} \L^{-\^A_X}, 
\]
where the sums $\sum_{S_I} \L^{-\^A_X}$ are considered as being taken in $\A_X$. 
\end{theorem}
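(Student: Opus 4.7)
The plan is to establish two things: first, that each individual sum $\sum_{S_I} \L^{-\^A_X}$ defines an element of $\A_X$, and second, that the total expression is independent of the chosen presentation $(r_\p^\Z)^{-1}(S)$ of the measurable set.

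For the first point, recall that on each face $\Sk_{\p,I}$ the function $\^A_X$ is linear in the coordinates $\ff_{\p,I}$ with coefficients $a_i = \^k_{D_i}(X) + 1 \in \Z_{\ge 1}$; integrality uses the hypothesis that $\p$ factors through the Nash blow-up, so $\Jac_\p$ is locally principal. Via $\ff_{\p,I}^\Z$, the sum transfers to $\sum_{x \in T} \L^{-\sum_{i\in I} a_i x_i}$ over a Presburger-definable set $T \subset \Z^s_{>0}$ with all $a_i > 0$. Cluckers' Presburger cell decomposition \cite{Clu03} reduces such a sum to a finite $\Z[\L]$-linear combination of one-variable geometric sums over arithmetic-progression-like subsets of $\Z_{>0}$, each of which manifestly lies in $\A_X = \Z[\L,\L^{-1},(1-\L^{-i})^{-1}]_{i\ge 1}$.

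For the second point, I would imitate the proof of \cref{t:existence-measure}: by resolution of singularities and weak factorization \cite{AKMW02,Wlo03}, it is enough to check invariance under a single smooth transversal blow-up $\a \colon (X_{\p'},D') \to (X_\p,D)$. With the notation of \cref{l:existence-measure-one-blowup}, I fix $I$ with $|I| = s$ and a Presburger-definable $S \subset \Sk_{\p,I}^\Z$, and aim to show
\[
[D_I^\o] (\L - 1)^s \sum_{S} \L^{-\sum_{i\in I} a_i x_i}
= \sum_{I' \succeq I} [D'^\o_{I'}] (\L - 1)^{|I'|} \sum_{(r_{\p'\p}^\Z)^{-1}(S) \cap \Sk_{\p',I'}^\Z} \L^{-\^A_X}
\]
in $\MA_X$. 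The motivic classes $[D'^\o_{\~J}]$ are computed exactly as in \eqref{stra1}--\eqref{stra4} of \cref{l:existence-measure-one-blowup}, but crucially nothing is discarded: the classes $[\mathbb{G}_m^{s-1-|J|}] = (\L-1)^{s-1-|J|}$ that vanished modulo $\L - 1$ now survive. The geometric-series identity that replaces the integral $(c-t)\int_0^\infty e^{-(c-t)x_0}dx_0 = 1$ from the continuous case is
\[
(\L - 1) \cdot [\P^{c-t-1}] \cdot \sum_{x_0 \in \Z_{>0}} \L^{-(c-t)x_0}
= (\L^{c-t}-1) \cdot \frac{\L^{-(c-t)}}{1-\L^{-(c-t)}} = 1,
\]
together with its truncations $\sum_{x_0=1}^{y_j} \L^{-(c-t)x_0}$ needed for the intermediate strata.

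The main obstacle is the bookkeeping in \cref{case:t-neq-0} ($C \subset \Supp(D)$): whereas modulo $\L-1$ only $I' = I$, $\~I$, and $\~I\setminus\{j\}$ contributed, in the atomic setting every $I' = \~J$ with $\{t+1,\dots,s\}\subset J\subset I$ survives and must be tracked. To match them I would split $S$ according to which of $y_1,\dots,y_t$ attains the minimum (with all patterns of ties), and on each piece apply the change of variables \eqref{eq:lambda} to factor the summation over $\Sk_{\p',\~J}^\Z$ as a product of a truncated geometric sum in $x_0$ and the original sum on the chosen piece of $S$. The contributions from intermediate $|J|$ should telescope, cancelling in pairs the unwanted boundary terms produced by the top stratum $I' = \~I$; combining what remains with the $I' = I$ contribution $[D_I^\o\setminus C_I^\o](\L-1)^s\sum_S \L^{-\sum a_i x_i}$ via $[D_I^\o] = [D_I^\o\setminus C_I^\o] + [C_I^\o]$ would complete the verification.
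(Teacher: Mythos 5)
Your proposal is correct and follows essentially the same route as the paper: convergence and membership in $\A_X$ via \cite[Theorem--Definition~4.5.1]{CL08} (built on Cluckers' Presburger cell decomposition), reduction to a single smooth transversal blow-up via resolution of singularities and weak factorization, and then a stratum-by-stratum computation matching the continuous integrals of \cref{l:existence-measure-one-blowup} with discrete sums. Your key steps for \cref{case:t-neq-0} — partitioning $S$ by which of $y_1,\dots,y_t$ attain the minimum and using the change of variables \eqref{eq:lambda} — are exactly the paper's decomposition $S = \bigsqcup_{J \subsetneq I} S^J$. Two small imprecisions worth flagging: the truncated geometric sum from the top stratum $\~I$ runs up to $\min_{1\le i\le t} y_i - 1$ (not $y_j$), because the proper-transform coordinates must satisfy $x_i = y_i - x_0 \ge 1$; and the cancellation is not really a telescope — the $J\subsetneq I$ contributions, after change of variables, reassemble (via the partition $S=\bigsqcup S^J$) into precisely $[C_I^\o](\L-1)^s\sum_S\L^{-(c-t)(\min y_i - 1)}\L^{-\^A_X}$, which cancels the single boundary term $-[C_I^\o](\L-1)^s\sum_S\L^{-(c-t)(\min y_i - 1)}\L^{-\^A_X}$ left over from the truncated geometric sum on $\~I$, in one shot. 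Neither imprecision affects the soundness of the overall argument.
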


\begin{proof}
The series $\sum_{S_I} \L^{-\^A_X}$ converge in $\Z(\hskip-1pt(\L^{-1})\hskip-1pt)$.
The fact that the sums define elements in $\A_X$ follows from
\cite[Theorem--Definition~4.5.1]{CL08} (see also the discussion at the beginning of
\cref{ss:constr-motivi-fn}).

Checking that the definition of $\m_X^\Z$ is independent of the choice of $\p$ requires a proof. 
Using resolution of singularities and the weak factorization theorem as 
in the proof of \cref{t:existence-measure}, we reduce the proof
of \cref{t:atomic-measure} to the following lemma. 
\end{proof}

\begin{lemma}
Let $\a \colon X_{\p'} \to X_{\p}$ be the blow-up of a smooth variety $C \subset X_{\p}$
with normal crossings with $D$, and let $D' = \sum_{i=0}^rD_i'$
where $D_0'$ is the exceptional divisor of $\a$ and $D_i' = f^{-1}_*D_i$ for $i > 0$. 
Let $S \subset \Sk_\p^\Z$ be an $\bL_\PR$-definable subset, and let
$S' := (r_{\p'\p}^\Z)^{-1}(S)$ where $r_{\p'\p}^\Z \colon \Sk_{\p'}^\Z \to \Sk_\p^\Z$
is the restriction of $r_{\p'\p}$. 
Then $S$ is $\bL_\PR$-definable if and only if $S'$ is $\bL_\PR$-definable, and 
\[
\sum_I[D_I^\o] \big(\L-1\big)^{|I|} \, \sum_{S_I} \L^{-\^A_X}
= 
\sum_I[D'^\o_{I'}] \big(\L-1\big)^{|I'|} \, \sum_{S'_{I'}} \L^{-\^A_X}.
\]
\end{lemma}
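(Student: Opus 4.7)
The plan is to mirror the strategy of the proof of \cref{l:existence-measure-one-blowup}, replacing Lebesgue integrals with Presburger-arithmetic sums that, via finite and infinite geometric series, land inside $\A_X$. By additivity on both sides, I would reduce to the case $S \subset \Sk_{\p,I}^\Z$ for a fixed $I = \{1,\dots,s\}$; after reindexing, let $t \in \{0,\dots,s\}$ be such that $C \subset D_i$ iff $1 \le i \le t$, and set $c := \codim(C, X_\p)$. The case $C_I^\o = \emptyset$ is trivial since $\a$ then identifies $\Sk_{\p,I}^\Z$ with $\Sk_{\p',I}^\Z$ and $D_I^\o$ with $D'^\o_I$, so I assume $C_I^\o \ne \emptyset$. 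As in \cref{l:existence-measure-one-blowup}, the only contributing index sets $I' \succeq I$ are $I' = I$ and $I' = \~J := \{0\} \cup J$ for subsets $J \subset I$ containing $\{t+1,\dots,s\}$.

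I would next establish the $\bL_\PR$-definability equivalence. The retraction maps $\lambda_{\~J} := \ff_{\p,I}\o r_{\p'\p}\o \ff_{\p',\~J}^{-1}$ are affine with integer coefficients, as in \eqref{eq:lambda}, and the nonempty fibers of $\lambda_{\~J}$ over a point $y$ are characterized by Presburger conditions involving $m(y) := \min(y_1,\dots,y_t)$ and strict inequalities among the $y_i$ for $i \le t$. Together with $S$ itself these are all $\bL_\PR$-definable, so by Presburger quantifier elimination (the same tool underlying the cell decomposition of \cite{Clu03,CL08}), definability of $S$ is equivalent to definability of each $S'_{I'}$.

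For the main equality I evaluate the motivic prefactors in $K_0(\Var_X)$ without reducing modulo $\L-1$, obtaining $[D'^\o_I] = [D_I^\o] - [C_I^\o]$, $[D'^\o_{\~I}] = \frac{\L^{c-t}-1}{\L-1}[C_I^\o]$ from the $\P^{c-t-1}$-bundle, and $[D'^\o_{\~J}] = (\L-1)^{l-1}\L^{c-t}[C_I^\o]$ when $J = I \setminus L$ with $|L| = l \ge 1$, from the $\mathbb{G}_m^{l-1} \times \A^{c-t}$-bundle structure. Using $a_0 = c - t + \sum_{j=1}^t a_j$ together with $\lambda_{\~J}$, the $I' = \~I$ contribution assembles into the finite geometric sum $\sum_{x_0=1}^{m(y)-1}\L^{-(c-t)x_0}$ for $t \ge 1$ (resp.\ the infinite sum $1/(\L^c-1)$ for $t = 0$). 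After multiplying by $[D'^\o_{\~I}](\L-1)^{s+1}$, the factor $\L^{c-t}-1$ cancels the geometric denominator, producing $[C_I^\o](\L-1)^s(1-\L^{-(c-t)(m(y)-1)})$ in the first case and $[C_I^\o](\L-1)^s$ in the second.

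The case $t = 0$ then closes immediately, since that contribution is exactly the gap $[C_I^\o](\L-1)^s\sum_S \L^{-\sum a_iy_i}$ between $[D_I^\o]-[C_I^\o]$ (from $I' = I$) and $[D_I^\o]$ (the left-hand side). For $t \ge 1$, the residual $-[C_I^\o](\L-1)^s\L^{-(c-t)(m(y)-1)}$ must be supplied by the $\~J$-contributions with $J \subsetneq I$. A direct preimage analysis shows $\lambda_{\~J}^{-1}(y) = \emptyset$ unless $J = I \setminus I_{\min}(y)$, where $I_{\min}(y) := \{i \le t : y_i = m(y)\}$, in which case it is a single point with $x_0 = m(y)$; the prefactor $(\L-1)^{l-1}\L^{c-t}[C_I^\o](\L-1)^{|\~J|}$ then collapses---independently of $l$---to $(\L-1)^s\L^{c-t}[C_I^\o]$, and the identity $\L^{c-t}\L^{-(c-t)m(y)} = \L^{-(c-t)(m(y)-1)}$ produces exactly the missing term. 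The main obstacle I anticipate is verifying that these cancellations persist after summation over $y$: one has to stratify $\ff_{\p,I}(S)$ into the Presburger-definable pieces $\{y : I_{\min}(y) = L\}$ and confirm that each resulting series resums to a genuine element of $\A_X$, for which I would invoke the summability results of \cite[\S4]{CL08} (in particular \cite[Theorem--Definition~4.5.1]{CL08}).
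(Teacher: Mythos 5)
Your proposal mirrors the paper's proof essentially step for step: the same reduction to a single face $\Sk_{\p,I}^\Z$, the same identification of the only contributing index sets $I$, $\~I$, $\~J$, the same stratum-class computations in $K_0(\Var_X)$, the same finite/infinite geometric resummations with the $(\L^{c-t}-1)$-cancellation, and the same minimum-stratification of $S$ into the pieces on which each $\lambda_{\~J}$ has a (unique) preimage. The only cosmetic difference is that you make the $\bL_\PR$-definability equivalence slightly more explicit (via Presburger quantifier elimination), which the paper leaves implicit; this is a fine addition and does not change the argument.
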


\begin{proof}
The proof follows the same steps of the proof of \cref{l:existence-measure-one-blowup},
but the computations need to be adapted to the current definitions. 

As in the proof of \cref{l:existence-measure-one-blowup}, 
it suffices to consider the case where $S \subset \Sk_{\p,I}^\Z$ for some $I$, 
and we can assume that $I = \{1,\dots,s\}$.
For any subset $J \subset I$, we let $\~J := \{0\} \cup J$. 
Let $c = \codim(C,X_{\p})$ and 
set $C_I^\o := C \cap D_I^\o$. 
If $C_I^\o = \emptyset$, then $(r_{\p'\p}^\Z)^{-1}(\Sk_{\p,I}^\Z) = \Sk_{\p',I}^\Z$
and the equality is clear.
Assume therefore that $C_I^\o \ne\emptyset$. We can assume that 
$C \subset D_i$ if and only if $1 \le i \le t$ for some $0 \le t \le s$. 
We have the decomposition $S' = \bigsqcup_{I' \succeq I} S'_{I'}$.
Recall that $I' \succeq I$ if and only if either $I' = I$ or
$I' =\~J$ where $\{t+1,\dots,s\} \subset J \subset I$.
\begin{enumerate}
\item
\label{it:strat1} 
First, note that $[D'^\o_I]=[D^\o_I\setminus C^\o_I]$ in $K_0(\Var_X)$.
\end{enumerate}
We now look at the strata parameterized by $\~J$ where $\{t+1,\dots,s\} \subset J \subset I$. 
\begin{enumerate}
\setcounter{enumi}{1}
\item
\label{it:strat2}
Taking $J = I$, we see that
$D'^\o_{\~I} \to C^\o_I$ is a piecewise locally trivial fibration with fiber $\P_k^{c-t-1}$, hence
$[D'^\o_{\~I}] = [C^\o_I][\P_X^{c-t-1}]$
in $K_0(\Var_X)$.
\item
\label{it:strat3}
If $J\subsetneq I$,
then $D'^\o_{\~{J}} \to C^\o_I$ is a piecewise locally trivial fibration with fiber 
$\mathbb{G}_{m,k}^{s-1-|J|} \times \A_k^{c-t}$, hence
$[D'^\o_{\~J}]=[C^\o_I](\L-1)^{s-1-|J|}\L^{c-t}$
in $K_0(\Var_X)$.
\end{enumerate}
Setting $a_i := \^A_X(\val_{D'_i})$, we have
$a_i = \^A_X(\val_{D_i})$ for $i > 0$ and 
$a_0 = c - t + \sum_{i=1}^t a_i$.

\setcounter{case}{0}
\begin{case}
Suppose $C\subsetneq \Supp(D)$, i.e., $t=0$.
\end{case}

Using the natural inclusions 
$\Sk_{\p,I}^\Z \inj \Z^{s}$
and
$\Sk_{\p',I}^\Z \sqcup \Sk_{\p',\~I}^\Z \inj \Z^{s+1}$,
we fix coordinates $y_1,\dots,y_s$ on $\Sk_{\p,I}^\Z$
and $x_0,\dots,x_s$ on $\Sk_{\p',I}^\Z \sqcup \Sk_{\p',\~I}^\Z$.
The retraction map $r_{\p'\p}^\Z$ restricted to $\Sk_{\p',I}^\Z \sqcup \Sk_{\p',\~I}^\Z$ 
is given by $y_i = x_i$ for $i=1,\dots,s$. By definition, we have
$\^A_X(y_1,\dots,y_s) = \sum_{i=1}^s a_iy_i$ and 
$\^A_X(x_0,\dots,x_s) = \sum_{i=0}^s a_ix_i=cx_0+\sum_{i=1}^{s} a_iy_i$.
We can then deduce in this case the formula stated in the lemma, as follows: 
\begin{align*}
[D'^\o_I] &(\L-1)^s \sum_{S'_I}\L^{-\^A_X}
+
[D'^\o_{\~I}](\L-1)^{s+1}\sum_{S'_{\~I}}\L^{-\^A_X} = \\
&=
[D^\o_I\setminus C^\o_I](\L-1)^s\sum_{S'_I}\L^{-\sum_{i=1}^s a_i x_i}
+
[C^\o_I] [\P_X^{c-1}](\L-1)^{s+1}\sum_{S'_{\~I}}\L^{-\sum_{i=0}^s a_i x_i}\\
&=
[D^\o_I\setminus C^\o_I](\L-1)^s\sum_S\L^{-\sum_{i=1}^s a_i y_i}
+
[C^\o_I](\L^c-1)(\L-1)^s\sum_{S'_{\~I}}\L^{-cx_0-\sum_{i=1}^s a_i y_i}\\
&=
[D^\o_I\setminus C^\o_I](\L-1)^s\sum_S\,\L^{-\^A_X}
+
[C^\o_I](\L-1)^s\sum_S\L^{-\^A_X}.
\end{align*}
The first equality follows from \eqref{it:strat1} and \eqref{it:strat2} above,
the second from the fact that 
$\Sk_{\p',I}^\Z$ maps bijectively to $\Sk_{\p,I}^\Z$, 
and the third from the fact that the fiber of $S'_{\~I}\to S$ over a point $(y_1,\dots,y_s )\in S$ is the set 
$\{(x_0,y_1,\dots,y_s)\mid x_0\in \Z_{\ge 1}\}$.

\begin{case}
Suppose $C\subset\Supp(D)$, i.e., $t\neq 0$.
\end{case}

We have inclusions 
$\Sk_{\p,I}^\Z \inj \Z^{s}$
and
$\Sk_{\p',I}^\Z \sqcup \big( \bigsqcup_J \Sk_{\p',\~J}^\Z \big) \inj \Z^{s+1}$,
where $J$ ranges among subsets of $I$ containing $\{t+1,\dots,s\}$.
Using these inclusions, we fix coordinates $y_1,\dots,y_s$ on $\Sk_{\p,I}^\Z$
and $x_0,\dots,x_s$ on $\Sk_{\p',I}^\Z \sqcup \big( \bigsqcup_J \Sk_{\p',\~J}^\Z \big)$.
The retraction map $r_{\p'\p}^\Z$ restricted to $\Sk_{\p',I}^\Z \sqcup (\bigsqcup_J \Sk_{\p',\~J}^\Z \big)$
is given by $y_i = x_0 + x_i$ for $1 \le i \le t$ and $y_i = x_i$ for $t+1 \le i \le s$. 
By definition, we have
$\^A_X(y_1,\dots,y_s) = \sum_{i=1}^s a_iy_i$ and 
$\^A_X(x_0,\dots,x_s) = \sum_{i=0}^s a_ix_i=(c-t)x_0+\sum_{i=1}^s a_iy_i$.

We now look at the formula stated in the lemma. 
The term involving the stratum $D'^\o_I$ on the right-hand-side of the formula is equal to
\[
[D^\o_I\setminus C^\o_I] (\L-1)^s\sum_S\L^{-\^A_X}.
\]

In order to compute the term that involves the stratum $D'^\o_{\~I}$, we first observe that
the fiber of the restriction map $S'_{\~I}\to S$ over a point $(y_1,\dots,y_s)\in S$ 
consists of the points $(x_0,x_1,\dots,x_s)\in \Z^{s+1}$ subject to the conditions
$x_i\ge 1$ for $1 \le i \le s$, 
$x_i+x_0=y_i$ for $i \le i \le t$, and
$x_i=y_i$ for $t+1 \le i \le s$. 
Thus the term is equal to
\begin{align*}
[D'^\o_{\~I}](\L-1)^{s+1}\sum_{S'_{\~I}}\L^{-\^A_X}
&=[C^\o_I][\P_X^{c-t-1}](\L-1)^{s+1}
\sum_{S'_{\~I}}\,\L^{-(c-t)x_0-\sum_{i=1}^s a_i y_i}\\
&=[C^\o_I](\L^{c-t}-1) (\L-1)^s
\sum_S \sum_{x_0=1}^{\min_{1\le i\le t}{y_i-1}}\L^{-(c-t)x_0-\sum_{i=1}^s a_i y_i}\\
&=[C^\o_I](\L-1)^s\sum_S\L^{-\^A_X}(1-\L^{-(c-t)(\min_{1\le i\le t}{y_i}-1)}).
\end{align*}

We now look at the terms involving $D'^\o_{\~J}$ 
for $J \subsetneq I$. Recall that we always have $\{t+1,\dots,s\} \subset J$.
There is a decomposition
$S=\bigsqcup_{\{t+1,\dots,s\} \subset J\subsetneq I}S^J$
where
\[
S^J := S\cap \big\{(y_1,\dots,y_s)\mid \text{for $1 \le j \le t$, 
$y_j= \min_{1\le i\le t} y_i \,\Leftrightarrow\, j\not\in J$} \big\}.
\]
The retraction map induces bijections $S'_{\~J}\to S^J$,
where the preimage of a point $(y_1,\dots,y_s)\in S^J$
has value $\min_{1\leq i \leq t}y_i$ in its $0$-th coordinate,
$y_i-\min_{1\leq i \leq t}y_i$ in its $i$-th coordinate for $1 \le i \le t$, and
$y_i$ in its $i$-th coordinate for $t+1 \le i \le s$.
The term involving the stratum $D'^\o_{\tilde{J}}$ (for $J \subsetneq I$) is then equal to
\begin{align*}
[D'^\o_{\~J}](\L-1)^{|J|+1}\sum_{S'_{\~J}}\L^{-\^A_X}
&= [C^\o_I] (\L-1)^s \L^{c-t} \sum_{S^J}\L^{-(c-t)\min_{1\leq i\leq t}{y_i}-\sum_{i=1}^s a_iy_i}\\
&=[C^\o_I](\L-1)^s
\sum_{S^J}\,\L^{-\^A_X}\L^{-(c-t)(\min_{1\leq i \leq t}y_i-1)}.
\end{align*}
Summing up the terms above, it is now clear that the formula stated in the lemma holds
in this case as well. This completes the proof of the lemma.
\end{proof}

\subsection{Constructible motivic functions}
\label{ss:constr-motivi-fn}

For every $\p$ and $I$, we consider the ring of functions
\[
\PP_{\p,I} \subset 
\Func(\Sk_{\p,I}^\Z,\A_X)
\]
generated by constant functions with values in $\A_X$, 
$\bL_\PR$-definable functions $\Sk_{\p,I}^\Z \to \Z$, and functions of the form
$\L^\b$ where $\b$ is an $\bL_\PR$-definable function $\Sk_{\p,I}^\Z \to \Z$.

For every $q\in\R_{>1}$, there is a unique ring homomorphism 
$\theta_q\colon \A_X \to \R$ given by $\L\mapsto q$.
We let $\IP_{\p,I} \subset \PP_{\p,I}$ 
be the set of functions $\f$ such that the series $\sum_{v \in \Sk_{\p,I}^\Z} \theta_q(\f(v))$
converges for all $q > 1$. We interpret this as an integrability condition. 
By \cite[Theorem-Definition~4.5.1]{CL08}, there is a unique $\A_X$-module homomorphism
$\sum_{\QM_{\p,I}^\Z} \colon \IP_{\p,I} \to \A_X$ 
satisfying
\[
\theta_q\Big(\sum_{\QM_{\p,I}^\Z}\f\Big) = \sum_{v \in \QM_{\p,I}^\Z} \theta_q(\f(v))
\]
for all $q > 1$. 

Let $\PP_\p \subset \Func(\Sk_\p^\Z,\A_X)$ 
be the set of functions $\f$
that restrict to elements in $\PP_{\p,I}$ for every $I$, 
and let $\PP_\p^\o \subset \PP_\p$ be the subring generated by the constant
function $\L$ and characteristic functions of the form $1_{\Sk_{\p,I}^\Z}$. 
There are natural inclusions $\PP_\p \inj \PP_{\p'}$ for $\p' \ge \p$, given by pull-back, 
and we define $\PP(X^{\val}(\Z)) := \dirlim_\p \PP_\p$ and $\PP^\o(X^{\val}(\Z)) := \dirlim_\p \PP_\p^\o$. 

For every $\p$, there is a unique ring homomorphism 
$\PP_\p^\o \to K_0(\Var_{X_\p})$ sending $\L \mapsto \L_{X_\p}$ and
$r_\p^\Z \o 1_{\Sk_{\p,I}^\Z} \mapsto [D_I^\o]_\p$, 
and $\IP_\p$ is a $\PP^\o_\p$-module under the action given by multiplication. Let
\[
\MP_\p := K_0(\Var_{X_\p}) \otimes_{\PP_\p^\o} \PP_\p
\quad\text{and}\quad
\IMP_\p := K_0(\Var_{X_\p}) \otimes_{\PP_\p^\o} \IP_\p.
\]
For every $\p' \ge \p$, there is a natural pull-back map $\MP_\p \to \MP_\p'$
which restricts to a map $\IMP_\p \to \IMP_{\p'}$. We can therefore define
\[
\MP(X^{\val}(\Z)) := \dirlim_\p \MP_\p
\quad\text{and}\quad
\IMP(X^{\val}(\Z)) := \dirlim_\p \IMP_\p.
\]

\begin{remark}
There is a natural isomorphism
$\MP_\p \cong \bigoplus_I [D_I^\o]_\p\,  K_0(\Var_{X_\p}) \otimes_{\Z[\L]} \PP_{\p,I}$
given by $[V]_\p \otimes \f \mapsto ([V]_\p [D_I^\o]_\p \otimes \f|_{\Sk_{\p,I}^\Z})_I$.
In Cluckers--Loeser's notation \cite{CL08}, 
$D_I^\o \times \Sk_{\p,I}^\Z$ is a globally definable sub-assignment,
and $\MP_\p = \bigoplus_I \cC(D_I^\o \times \Sk_{\p,I}^\Z)$.
\end{remark}

\begin{definition}
An element $f \in \MP(X^{\val}(\Z))$ is called a \emph{constructible motivic function} on $X^{\val}(\Z)$. 
Such element is said to be \emph{integrable} if it belongs to $\IMP(X^{\val}(\Z))$.
If $f$ is an integrable constructible function that is represented by 
an element $f_\p \sum_j [V_j]_\p \otimes \f_j \in \IMP_\p$, then we define the \emph{integral}
of $f$ to be the element of $\MA_X$ given by
\[
\int_{X^{\val}(\Z)} f \, d\m_X^\Z := \sum_I [V_j\times_{X_\p}D_I^\o] \big(\L-1\big)^{|I|} \, 
\sum_{\Sk_{\p,I}^\Z} \f_j\L^{-\^A_X}.
\]
\end{definition}

The same proof of \cref{t:integral-well-defined}, with the obvious adaptation in the
computations, shows that the definition of the integral is independent of any choice. 

Starting from here, one can prove a change-of-variables formula and
develop a functorial theory similarly as done in the previous sections.
The change-of-variables formula is stated next.

\begin{theorem}
\label{t:change-of-vars}
Let $h \colon Y \to X$ be a resolution of singularities, and let $f \in \IMP(X^{\val}(\Z))$.
Then $(f \o h^{\val}(\Z)) \L^{-\ord(\Jac_h)} \in \IMP(Y^{\val}(\Z))$ and
\[
\int_{X^{\val}(\Z)} f\,d\m_X^\Z = h_* \int_{Y^{\val}(\Z)} (f\o h^{\val}(\Z)) \L^{-\ord(\Jac_h)} \,d\m_Y^\Z.
\]
\end{theorem}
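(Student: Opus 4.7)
The plan is to reduce the identity to a single computation on a common snc model dominating both $X$ and $Y$, where it becomes an almost immediate consequence of a pointwise relation between Mather log discrepancy functions. Since $h$ is proper birational and $Y$ is smooth, the induced map $h^{\Zval} \colon Y^{\Zval} \to X^{\Zval}$ is a bijection, and I will identify the two valuation spaces throughout.

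Given $f \in \IMP(X^{\Zval})$ determined at a level $\p_0$ over $X$, first use resolution of singularities (applied to the indeterminacy of $\p_0^{-1}\o h$) to pass to a higher local snc model $\p \ge \p_0$ over $X$ such that $X_\p \to X$ factors through $h$ via a proper birational morphism $q \colon X_\p \to Y$. Because $Y$ is smooth, $(X_\p,D)$ is simultaneously a local snc model over $Y$, and the skeleton $\Sk_\p$, together with its face decomposition $\Sk_\p = \bigsqcup_I \Sk_{\p,I}$, is unaffected by whether it is viewed inside $X^{\Zval}$ or $Y^{\Zval}$. On $X_\p$ one has the factorization $\Jac_\p = q^{-1}(\Jac_h)\.\O_{X_\p}\cdot \Jac_q$, the same type of multiplicativity exploited in \cref{ss:^A_X}, valid because the exceptional locus of $h$ is contained in the smooth locus of $Y$. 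Taking $\ord_E$ for any prime divisor $E \subset X_\p$ yields $\^k_E(X) = \val_E(\Jac_h) + \^k_E(Y)$, whence adding $1$ and extending by linearity on each face gives the key pointwise identity
\[
\^A_X = \^A_Y + \ord(\Jac_h) \quad \text{on } \Sk_\p.
\]

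With this identity in hand, represent $f$ at level $\p$ by $f_\p = \sum_j [V_j]_\p \otimes \f_j$ with $\f_j \in \IP_\p$. Substituting $\L^{-\^A_X} = \L^{-\ord(\Jac_h)}\.\L^{-\^A_Y}$ into the defining formula for $\int_{X^{\Zval}} f\,d\m_X^\Z$ converts each inner sum $\sum_{\Sk_{\p,I}^\Z} \f_j \L^{-\^A_X}$ into $\sum_{\Sk_{\p,I}^\Z} \f_j \L^{-\ord(\Jac_h)} \L^{-\^A_Y}$. This single substitution simultaneously establishes the integrability of $(f \o h^{\Zval})\L^{-\ord(\Jac_h)}$ on $Y^{\Zval}$ (the relevant Presburger-type series converge term-by-term because the original ones do, via the homomorphisms $\theta_q$) and identifies the result with $\int_{Y^{\Zval}} (f\o h^{\Zval})\L^{-\ord(\Jac_h)} \,d\m_Y^\Z$ computed on $\p$ viewed as a model over $Y$; the appearance of $h_*$ on the motivic coefficients is automatic from the equality $[V_j \times_{X_\p} D_I^\o]_X = h_*[V_j \times_{X_\p} D_I^\o]_Y$ in the Grothendieck rings. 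Independence of the choice of $\p$ follows from the same weak-factorization argument used throughout the paper.

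The main obstacle I anticipate is confirming the Jacobian factorization, and hence the Mather log discrepancy identity, in the generality required here; once that is settled the argument is essentially a substitution, closely mirroring the proof of the corresponding continuous-measure formula in \cref{ss:bir-transf-rule}.
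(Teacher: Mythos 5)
Your proof is correct and follows essentially the same route the paper intends: the paper proves the continuous version in \cref{ss:bir-transf-rule} by identifying $X^{\val}\simeq Y^{\val}$, invoking the Jacobian multiplicativity $\Jac_{h\circ g}=\Jac_h\cdot\Jac_g$ (valid since $Y$ is smooth) to deduce $\^A_X-\^A_Y=\ord(\Jac_h)$, and then states that the atomic version ``can be proved similarly.'' Your argument fills in exactly that substitution at the level of the Presburger sums $\sum_{\Sk_{\p,I}^\Z}\f_j\L^{-\^A_X}$, so it matches the paper's approach; the one point worth making explicit is that the integrability of the pushed-around integrand really does come for free because $\ord(\Jac_h)\ge 0$, so multiplying the series terms by $q^{-\ord(\Jac_h)(v)}\le 1$ preserves convergence.
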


\begin{example}
If $X$ is smooth and $D = \sum d_iD_i$ is an effective simple normal crossing divisor, then 
\[
\int_{X^{\val}(\Z)} \L^{-\ord(D)} \, d\m_X^\Z 
= \sum_{I} [D_I^\o] \big(\L-1\big)^{|I|}
\prod_{i \in I} \sum_{m_i = 1}^\infty\L^{-m_i(d_i+1)} 
= \sum_{I} \frac{[D_I^\o]}{\prod_{i \in I} \big[\P_X^{d_i}\big]}.
\]
This agrees with the usual motivic integral defined using arc spaces. 
It follows by \cref{t:change-of-vars} that integration over $X^{\val}(\Z)$ agrees with 
integration over $X_\infty$ for every variety $X$, in the following sense: if $Z$ 
is any proper closed subscheme of a variety $X$ then 
\[
\int_{X^{\val}(\Z)} \L^{-\ord(Z)} \, d\m_X^\Z = \int_{X_\infty} \L^{-\ord(Z)} \, d\m^{X_\infty}.
\]
Note that both integrals take values in $\MA_X$. 
This also means that integration over $X^{\val}(\Z)$ agrees, in the sense discussed in \cite[Section~16.3]{CL08}, 
with the one constructed by Cluckers and Loeser. 
\end{example}

As for the functorial theory, it turns out that it is
actually simpler to define push-forwards using the atomic measure since in this setting
there is no longer need to keep track of potential faces and working with vector functions
as we did in \cref{s:pushforward}. 
Just to give an idea of how the functorial approach can be developed using the
atomic measure, we overview some of the results
one can prove in this setting.

As in \cref{s:pushforward}, we fix a variety $Z$ and a local snc model $\t \colon (Z_\t,F) \to Z$ over it. 
Given a variety $X$ which dominates $Z$, and a local snc model $\p \colon (X_\p,D) \to X$ above $\t$, 
we are going to define the subspace $\relIMP{\t}_\p \subset \MP_\p$ of constructible motivic functions 
that are relatively integrable over $\t$. 
If $b_{\p\s} \colon (X_\p,D) \to (Y_\s,E)$ is a morphism in the category 
$\Snc_\t$, then we will define a push-forward morphism 
\[
b_{\p\s!}^\Z \colon \relIMP{\t}_\p \to \relIMP{\t}_\s,
\]
similarly as it was done in \cref{ss:PushForward}. 

The first step is to define the subring $\relIP{\s}_\p \subset \PP_\p$ of functions that are
relatively integrable over $\s$. For every $I$, let $b_{\p,I}^\Z \colon \Sk_{\p,I}^\Z \to \Sk_\s^\Z$
be the restriction of $b_{\p\s}^{\Sk} \colon \Sk_\p \to \Sk_\s$. 

\begin{definition}
We say that a function $\f \in \PP_\p$ is \emph{relatively integrable over $\s$} 
if for every $I$ and $w \in \Sk_\s^\Z$ the series 
\[
\sum_{v \in (b_{\p,I}^\Z)^{-1}(w)} \theta_q(\f(v))q^{ - \^A_X(v) + \^A_Y(w)}
\]
converges for all $q > 1$. According to \cite[Theorem--Definition~4.5.1]{CL08}, 
this gives an element of $\A_X$
which we denote by $\sum_{(b_{\p,I}^\Z)^{-1}(w)} \f\,\L^{ - \^A_X + \^A_Y}$.
We denote by $\relIP{\s}_\p \subset \PP_\p$ the subring
of relatively integrable functions over $\s$, and define
$\relIMP{\s}_\p := K_0(\Var_X) \otimes_{\PP_\p^\o} \relIP{\s}_\p$.
\end{definition}

The next step is to define push-forward at the level of functions. 
By \cite[Lemma~4.5.7]{CL08}, we have an inclusion $\relIP{\t}_\p \subset \relIP{\s}_\p$, and 
for every $\f \in \relIP{\t}_\p$, the assignment
\[
b_{\p\s!}^\Z(\f)(w) := \sum_I \sum_{(b_{\p,I}^\Z)^{-1}(w)} \f\,\L^{ - \^A_X + \^A_Y}.
\]
for every $w \in \Sk_\s^\Z$ defines an element $b_{\p\s!}(\f) \in \Func(\Sk_{\p,I}^\Z,\A_X)$. 
By \cite[Theorem--Definition~4.5.1]{CL08}, this element belongs to $\PP_\p$, 
and in fact to $\relIP{\t}_\s$ by \cite[Lemma~4.5.7]{CL08}. 
After proving the analogue of \cref{l:rho} in the present setting, we obtain 
the desired push-forward map $b_{\p\s!}^\Z \colon \relIMP{\t}_\p \to \relIMP{\t}_\s$. 

\begin{remark}
As explained in \cite[Section~4.2]{CL08}, one can define a partial ordering in $\A_X$, 
and hence the semiring $\PP_\p^+ \subset \PP_\p$ of functions $\f \ge 0$. 
Restricting then to such functions, one can use part~(2) of \cite[Lemma~4.5.7]{CL08}
to prove a Tonelli type statement analogous to part~(2) of \cref{p:Fubini-Tonelli-for-G}.
\end{remark}

If $\p' \ge \p$ is a higher local snc model over $X$ and 
$\a \colon X_\p' \to X_\p$ is the corresponding map, then we have a
push-forward map $\a_! \colon \relIMP{\t}_{\p'} \to \relIMP{\t}_\p$, hence we can define
\[
\relIMQ{\t}(X^{\val}(\Z)) := \invlim_\p \relIMP{\t}_\p
\quad\text{and}\quad
\relIMQ{Z^{\val}(\Z)}(X^{\val}(\Z)) = \dirlim_\t \relIMQ{\t}(X^{\val}(\Z)),
\]
and $b$ induces push-forward maps 
\[
b_!^\Z \colon \relIMQ{\t}(X^{\val}(\Z)) \to \relIMQ{\t}(Y^{\val}(\Z))
\quad\text{and}\quad
b_!^\Z \colon \relIMQ{Z^{\val}(\Z)}(X^{\val}(\Z)) \to \relIMQ{Z^{\val}(\Z)}(Y^{\val}(\Z)).
\] 
Just like in \cref{t:functoriality}, the assignments $X \mapsto \relIMQ{\t}(X^{\val}(\Z))$
and $X \mapsto \relIMQ{Z^{\val}(\Z)}(X^{\val}(\Z))$ are functorial. Here we use again \cite[Lemma~4.5.7]{CL08}. 

When $Z = X$ and $\t = \id_X$, we denote $\relIMQ{\id_X}(X^{\val}(\Z))$ simply by $\IMQ(X^{\val}(\Z))$. 
Note that $\relIMP{\id_X}_{\id_X} = \MA_X$, hence we have a map
\[
t^\Z_X \colon \relIMQ{X}(X^{\val}(\Z)) \to \MA_X.
\]

\begin{definition}
An element $g \in \IMQ(X^{\val}(\Z))$ is called an \emph{integrable constructible motivic function}. 
Its \emph{integral} is defined to be
\[
\int_{X^{\val}(\Z)} g\, d\m_X^\Z := t^\Z_X(g).
\]
\end{definition}

We have the following analogue of \cref{t:Lambda}. 

\begin{theorem}
There is a natural inclusion $\IMP(X^{\val}(\Z)) \inj \IMQ(X^{\val}(\Z))$ that is compatible
with respect to the corresponding definitions of integral.
\end{theorem}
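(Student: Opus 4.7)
The plan is to mirror the construction of the map $\Lambda$ in \cref{t:Lambda}, adapted to the atomic setting. The approach is actually more transparent here because the atomic measure records information on all $\Z$-valued quasi-monomial valuations directly, so there is no need for the auxiliary vector Function structure introduced in \cref{s:pushforward}: a single component suffices.

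First, I would construct for each local snc model $\p \colon (X_\p, D) \to X$ a natural linear map $\Lambda^\Z_\p \colon \IMP_\p \to \relIMP{X}_\p$. Observe that the skeleton associated to $\id_X$ consists of the single point $\{*\}$ corresponding to the trivial valuation, on which $\^A_X$ vanishes, and the map $b^\Z_{\p,I} \colon \Sk_{\p,I}^\Z \to \Sk_{\id_X}^\Z$ collapses every face to this point. Thus the relative integrability condition over $\id_X$ is exactly the convergence condition defining $\IP_\p$, so $\IP_\p = \relIP{X}_\p$ and we obtain a tautological identification (or inclusion) $\Lambda^\Z_\p$ at the level of tensor products.

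Next, I would check that these maps are compatible with the transition structures: the pull-back $\a^* \colon \IMP_\p \to \IMP_{\p'}$ on the source side (used to form the direct limit $\IMP(X^\Zval)$) and the push-forward $\a_! \colon \relIMP{X}_{\p'} \to \relIMP{X}_\p$ on the target side (used to form the inverse limit $\IMQ(X^\Zval)$), for every pair of models $\p' \ge \p$ with induced morphism $\a \colon X_{\p'} \to X_\p$. Concretely, I need the analogue of the commutative diagram used in the proof of \cref{t:Lambda}, namely $\a_! \circ \Lambda^\Z_{\p'} \circ \a^* = \Lambda^\Z_\p$. By Hironaka's resolution of marked ideals and the weak factorization theorem, this reduces to the case where $\a$ is a smooth transversal blow-up, and then by linearity one can assume the representative has the form $[V]_\p \otimes \f_\p$ with $\f_\p$ supported on a single face $\Sk_{\p,I}^\Z$. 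Once compatibility is verified, the maps $\Lambda^\Z_\p$ assemble into a linear map $\Lambda^\Z \colon \IMP(X^\Zval) \to \IMQ(X^\Zval)$ by the standard direct-limit/inverse-limit procedure: given $f$ represented by $f_{\p_0}$, the $\p$-th component of $\Lambda^\Z(f)$ is $\a_!(\Lambda^\Z_{\p'}(\a'^*(f_{\p_0})))$ for any $\p' \ge \p, \p_0$, and the above commutativity ensures independence of the choice of $\p'$.

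The main obstacle is the explicit computation of the commutative diagram in the case where the blow-up center $C$ is contained in $\Supp(D)$, which is the analogue of \cref{case:t-neq-0} in the proof of \cref{l:existence-measure-one-blowup}. There, $\a_!$ must collect contributions coming from the strata $D'^\o_I$, $D'^\o_{\~I}$, and $D'^\o_{\~J}$ for $\{t+1,\dots,s\} \subset J \subsetneq I$, and these must combine to reproduce $[V \times_{X_\p} D_I^\o]_\p \otimes \f_\p$ in $\relIMP{X}_\p$. The required identities are direct combinatorial analogues of the integrations performed in the lemma after \cref{t:atomic-measure}, using the decomposition $S = \bigsqcup_J S^J$ governed by which of the first $t$ coordinates of a point in $\Sk_{\p,I}^\Z$ is minimal; the telescoping sums over the fibers of the retractions $r_{\p'\p}^\Z$ replace the iterated integrals of that proof, with class relations \eqref{it:strat1}--\eqref{it:strat3} playing the role of the motivic coefficient identities. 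Once the diagram is established, injectivity of $\Lambda^\Z$ follows because each $\Lambda^\Z_\p$ is injective (being the identity on $\IP_\p = \relIP{X}_\p$) and any nonzero $f$ is represented by a nonzero $f_\p$ at some level $\p$. Finally, compatibility with integrals is obtained by specializing to the terminal model $\id_X$: since $\relIMP{\id_X}_{\id_X} = \MA_X$ and $t^\Z_X$ is by definition the push-forward along this chain, the $\id_X$-component of $\Lambda^\Z(f)$ is precisely $\int_{X^\Zval} f\, d\m_X^\Z$ as defined directly.
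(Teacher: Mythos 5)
Your proposal carries out exactly the adaptation of the proof of \cref{t:Lambda} that the paper intends (no proof is recorded; the theorem is simply stated as the analogue), and you are right that the absence of an almost-everywhere quotient in the atomic picture renders the vector Function bookkeeping of \cref{s:pushforward} unnecessary, a point the paper itself emphasizes just before defining the atomic push-forward. One small imprecision: $\IP_\p = \relIP{X}_\p$ should be the inclusion $\IP_\p \subseteq \relIP{X}_\p$, since relative integrability over $\id_X$ asks for convergence of $\sum_{\Sk_{\p,I}^\Z}\theta_q(\f)\,q^{-\^A_X}$ whereas membership in $\IP_\p$ asks for convergence of $\sum_{\Sk_{\p,I}^\Z}\theta_q(\f)$, and since $\^A_X$ is bounded below by a positive constant on each $\Sk_{\p,I}^\Z$ the latter implies the former but not conversely; only the inclusion is needed to define $\Lambda^\Z_\p$, so nothing in your argument is affected. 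The rest — the commutative diagram $\a_!\circ\Lambda^\Z_{\p'}\circ\a^* = \Lambda^\Z_\p$ as the key compatibility check, the reduction to smooth transversal blow-ups via Hironaka and weak factorization, and the use of the class relations together with the $S = \bigsqcup_J S^J$ decomposition in place of the iterated integrals — is exactly the intended structure.
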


We finish this section by revisiting \cref{e:ramified-cover} from the viewpoint of atomic measures.
While the answer is of course the same, comparing the computations 
gives a good sense of the difference between the two approaches. 

\begin{example}
Keeping the same notation as in \cref{e:ramified-cover}, consider now $f_\p = [P] \otimes 1$
as an element of $\MP_\p$ and $f$ as an element of $\MP(X^{\val}(\Z))$.
We let $b_R^\Z \colon R^\Z \to S^\Z$
be the restriction of $b_R$, where $R^\Z \subset R$ and $S^\Z \subset S$ are the 
subsets of integral points in the respective coordinates $x$ and $y$. 
We denote by $m$ and $n$ the points of these sets and identify them with their positive integer values. 
Note that $b_R^\Z$ is given by $n=2m$, hence $b_R^\Z(R^\Z) = S^{2\Z}$, the subset of $S^\Z$ of even integral points.
Then $b_!^\Z(f)$ is represented at level $\s$ by
\[
b_{\p\s!}^\Z(f_\p) = b_{\p\s*}[P] \otimes b_{\p\s!}^\Z(1) =
\begin{cases}
[Q] \otimes \L_Y^{-\frac n2 + n} &\text{if $n$ is even}, \\
0 &\text{if $n$ is odd},
\end{cases}
\]
hence
\[
t_Y^\Z(b_!^\Z(f)) = \int_{Y^{\val}(\Z)} b_!^\Z(f)\, d\m_Y^\Z 
= [Q] (\L_Y-1) \sum_{m \in 2\Z_{>0}} \L_Y^{(- \frac n2+n) - n} = [Q].
\]
On the other hand, 
\[
b_*(t_X^\Z(f)) = b_*\Big( \int_{X^{\val}(\Z)} f\, d\m_X^\Z\Big) 
= b_*\Big([P] (\L_X-1) \sum_{n = 1}^\infty \L_X^{-n}\Big) = [Q]
\]
where $b_* \colon \MA_X \to \MA_Y$ is the map induced by push-forward on Grothendieck rings.
\end{example}

\section{Integration over nontrivially valued fields}
\label{s:valued-fields}

In the previous sections, we have focused on the case where $X$ is a variety over a field $k$ with trivial norm. 
Here we outline an analogous theory when $X$ is defined over a 
nontrivially valued field $K$. We will not discuss all aspects of the theory, but only 
give some directions on how it can be developed in this setting.
While there are clear analogies with the case of constant fields, 
there are also some meaningful adjustments that need to be made.
The main result in this section is a formula, stated in \cref{t:comparison},
which relates motivic integrals over a non-trivially valued field
to pairs of motivic integrals over trivially valued fields.

Throughout the section, we consider the following setting.
We let $R = k[[t]]$ and $K = k(\hskip-1pt(t)\hskip-1pt)$ where $k$ is an
algebraically closed field of characteristic zero,
and let $X$ be a proper variety over $K$ with invertible canonical sheaf $\om_X$. We denote $\D := \Spec R$. 
We assume that there exist a variety $Y$ with invertible canonical sheaf, 
a proper flat morphism $Y \to C$ over $k$ where
$C$ is a smooth curve, and a non-constant morphism $\g \colon \D \to C$ over $k$, such that 
$X \cong Y \times_C \Spec K$. 
Let $p \in C$ be the image of $\Spec k \to C$. 

Most of what is discussed in this section can be extended to more general settings
where $R$ is a discrete valuation ring with fraction field $K$ and residue field $k$, 
provided the necessary assumptions on existence of resolution of singularities
and weak factorizations are made. The more restrictive setting considered here
becomes relevant in the last subsection, when we formulate of \cref{t:comparison}.

\subsection{Analytifications over valued fields}

Continuing with the above notation, we regard $K$ as a valued field, 
with valuation $v := \ord_t\colon K^\times \to \G_v \cong \Z$. 
To regard $K$ as a Banach algebra requires fixing 
an embedding of $\G_v$ in $\R$ and viewing $v$ as a real valuation. 
A standard choice is to normalize $v$ so that $v(t) = 1 \in \R$.
i.e., to embed $\G_v \inj \R$ as the set of integers. 
Let $\|\bla\| := e^{-v}$ be the corresponding norm, and
let $X^\an$ be the analytification of $X$ over $(K,\|\bla\|)$. 
We will also denote such analytification by $X^\an_1$
when we want to remember that we fixed the normalization $v(t) = 1 \in \R$.

One can vary the normalization of $v$ by multiplying by any positive real number $b$,
so that the valuation takes value $b$ on $t$. This
corresponds to regarding $K$ with the norm given by $\|\bla\|^b$.  
For every choice of $b$, we consider the Berkovich analytification of $X$ over $(K,\|\bla\|^b)$, 
which we denote by $X^\an_b$. 
By definition, a point $x \in X^\an_b$ corresponds to a multiplicative seminorm $|\bla|_x$ on $X$
that restricts to $\|\bla\|^b$ on $K$; we set $v_x := -\log |\bla|_x$. 

All the analytifications $X^\an_b$ are canonically 
isomorphic to each other, but the choice of $b$ may in principle affect the measure we wish to define. 
In fact, it is unclear how to define a measure directly on any of the spaces $X^\an_b$.
It turns out that is more natural, from the point of view of this paper, 
to consider all analytifications $X^\an_b$ at once. 
This can be done as follows (the construction does not require properness
and will be also applied to open subsets of $X$).

The set of norms of the form $\|\bla\|^b$ form an interval $(0,\infty)$, with each 
point $b \in (0,\infty)$ identified with the norm $\|\bla\|^b$. 
We denote by $X^\an_{(0,\infty)}$ the set of multiplicative seminorms
on $X$ that restrict to a norm of the form $\|\bla\|^b$ on $K$ for some $b > 0$. 
Set theoretically, we have
\[
X^\an_{(0,\infty)} = \bigcup_{b > 0} X^\an_b.
\]
We equip this space with the weakest topology such that for every open set $U \subset X$
and every function $h \in \O_X(U)$, the valuation map $x \mapsto v_x(h)$ is continuous
on $U^\an_{(0,\infty)} \subset X^\an_{(0,\infty)}$. 
The map
$\lambda \colon X^\an_{(0,\infty)} \to (0,\infty)$ given by $x \mapsto v_x(t)$
is continuous, with fiber over a point $b \in (0,\infty)$ equal to the analytification $X^\an_b$,
and there is a naturally defined map
$\theta \colon X^\an_{(0,\infty)} \to X^\an_1 = X^\an$ given by $x \mapsto y$
where $y$ is characterized by the condition $|h|_y = |h|_x^{1/\lambda(x)}$ for all local functions $h \in \O_X(U)$
with $y \in U^\an_{(0,\infty)}$.
These maps yield a canonical homeomorphism
\[
X^\an_{(0,\infty)} \xrightarrow{\sim} X^\an \times (0,\infty), \quad x \mapsto (\theta(x),\lambda(x)).
\]

By assigning to the interval $(0,\infty)$ measure 1 using the volume form $e^{-u}du$, we will interpret the measure 
we are going to define on $X^\an_{(0,\infty)}$ as an `average of measures' on the fibers $X^\an_b$. 
This will lead to our definition of measure on $X^\an$.

\subsection{Quasi-monomial valuations}

Continuing with the same setting, 
we adopt the following notion of model (and snc model) over $R$. 

\begin{definition}
An \emph{$R$-model for $X$} is a proper flat scheme $\cX_\p$ over $R$
endowed with a proper birational morphism $\p \colon (\cX_\p)_K \to X$.
\end{definition}

\begin{definition}
\label{d:snc-R-model}
A \emph{snc $R$-model for $X$} consists of an $R$-model $\cX_\p$ for $X$
equipped with a divisor $D$ on $\cX$, for which
there exists a snc model $(Y_\s,E)$ over $Y$, with
$E$ containing in its support the fiber of $Y_\s$ over $p \in C$, 
such that $\cX_\p = Y_\s \times_C \D$ and $D = E \times_C \D$.
We refer to $D$ as the \emph{boundary divisor} of the snc $R$-model.
\end{definition}

For ease of notation, we may refer to a snc $R$-model by writing $(\cX_\p,D)$ 
(or just $\p$, if no risk of confusion is likely to arise), but we stress
that all the information listed above is part of the datum of the model. 

By definition, a snc $R$-model $(\cX_\p,D)$ has the following properties:
\begin{enumerate}
\item
$\cX_\p$ is a regular scheme and $\p \colon (\cX_\p)_K \to X$ is a resolution of singularities;
\item
$\p \colon (\cX_\p)_K \to X$ factors through the Nash blow-up of $X$; 
\item
$D$ contains in its support the exceptional locus of $\p$ and the reduced fiber $((\cX_\p)_k)_\red$;
\item
$D$ is a reduced snc divisor and every stratum of $D$ is irreducible. 
\end{enumerate}

\begin{remark}
The above definition of $R$-model is different from what is usually given in the literature, 
as we do not require $\p$ to be an isomorphism and furthermore we allow the boundary divisor to
have components away from the fiber over $k$. While working with the usual definition 
(i.e., requiring $\p$ to be an isomorphism and the boundary divisor to be supported
on the fiber over $k$) 
may suffice in order to define a reasonable measure on the Berkovich space, by enlarging the class
of $R$-models we obtain a larger class of measurable sets and integrable functions. 
This larger generality also allow us to work with singular varieties over $K$.
\end{remark}

Given two snc $R$-models $(\cX_\p,D)$ and $(\cX_{\p'},D')$, we write
$\p' \ge \p$ whenever the birational map $\a \colon \cX_{\p'} \rat \cX_\p$ is a morphism
and $\Supp(\a^*D) \subset \Supp(D)$. 
Note that the condition that $\a$ is a morphism is stronger than just requiring that $\p' \colon (\cX_{\p'})_K \to X$
factors through $\p \colon (\cX_\p)_K \to X$.
Without loss of generality, we may assume that whenever we have $\p' \ge \p$, the corresponding
snc models $(Y_\s,E)$ and $(Y_{\s'},E')$ are chosen so that $\s' \ge \s$.

\begin{definition}
Given a snc $R$-model $(\cX_\p,D)$ and writing $D = \sum_{i=1}^r D_i$, 
we say that an index set $I \subset \{1,\dots,r\}$ is \emph{over $k$}, 
and write $I/k$, if the stratum $D_I = \bigcap_{i \in I} D_i$ is contained in the central fiber $\cX_k$.
Similarly, we say that $I$ is \emph{over $K$}, 
and write $I/K$, if the stratum $D_I = \bigcap_{i \in I} D_i$ intersects the generic fiber $(\cX_\p)_K$.
\end{definition}

Given any snc $R$-model $(\cX_\p,D)$, we associate to it the set of 
quasi-monomial valuations $\Sk_{\p/k} \subset X^\an_{(0,\infty)}$ 
determined by the boundary divisor $D = \sum D_i$. 
More precisely, $\Sk_{\p/k}$ is the set of valuations on $X$ that are monomials in the toroidal coordinates
on $\cX_\p$ determined by $D$ at the generic point of strata $D_I$ with $I/k$. 
We set $X^\qm_{(0,\infty)} := \bigcup_\p \Sk_{\p/k}$.
For every $\p$, there is a decomposition $\Sk_{\p/k} = \bigsqcup_{I/k} \Sk_{\p,I}$. 
A subset $P \subset \Sk_\p$ is a \emph{potential face} if there is a snc model $\p' \ge \p$
such that $P$ is a face in $\Sk_{\p'/k}$.

Assuming for a moment that $\p \colon (\cX_\p)_K \to X$ is an isomorphism and $D$ is supported in the fiber 
$(\cX_\p)_k$, for every $b \in (0,\infty)$ the intersection $\Sk_{\p/k} \cap X^\an_b$ 
is the skeleton $\Sk_\p(X^\an_b)$ of $X^\an_b$ as constructed in \cite{MN15},
and $\Sk_{\p/k}$ can be viewed as the fan over this skeleton with the origin removed. 
Just like in the case of constant fields discussed in \cref{s:measure},
for every $c$ there are continuous retractions $X^\an_b \to \Sk_\p(X^\an_b)$,
and these retractions glue together to continuous retractions 
$r_\p \colon X^\an_{(0,\infty)} \to \Sk_{\p/k}$ \cite[Section~(3.1.5)]{MN15}.

If $D$ is not supported in $(\cX_\p)_k$, then 
$\Sk_{\p/k}$ is not a fan over a simplicial complex, as it does not contain all
the boundary faces of its simplexes.
For instance, if $I$ is an index set over $k$ which contains an index $i \in I$
such that the component $D_i$ is not contained in $(\cX_\p)_k$, then 
the ray corresponding to $\{i\}$, which we would expect to see in the boundary
of $\Sk_{\p,I}$, is not in $\Sk_{\p/k}$. 

In general, we view $\Sk_{\p/k}$ both as a subset of $X^\an_{(0,\infty)}$ and of $((\cX_\p)_K)^\an_{(0,\infty)}$. 
There is a continuous retraction $r_\p \colon ((\cX_\p)_K)^\an_{(0,\infty)} \to \Sk_{\p/k}$, and for any set
$S \subset \Sk_{\p/k}$ we consider the set $\p^\an_{(0,\infty)}(r_\p^{-1}(S)) \subset X^\an_{(0,\infty)}$
where $\p^\an_{(0,\infty)} \colon ((\cX_\p)_K)^\an_{(0,\infty)} \to X^\an_{(0,\infty)}$
is the morphism induced by $\p$.

\subsection{Motivic  measure}

In order to define a measure on $X^\an_{(0,\infty)}$
(and hence on $X^\an$), we need a replacement of the 
Mather log discrepancy function. This is given by the \emph{weight function}. This function
was introduced in \cite[Section~4]{MN15} assuming that $X$ is smooth.
Recall that we allow $X$ to be singular, but assume that the canonical sheaf $\om_X$
is an invertible sheaf. Given that we allow, in our definition of snc model, to blow up
the generic fiber and hence pass to a resolution of $X$, assuming that the canonical sheaf
is an invertible is really all we need. 
In a nutshell, one fixes a nonzero rational canonical form $\om$ on $X$ 
(i.e., a nonzero rational section of $\om_X$) and defines the weight function to be
the unique continuous function $\wt_\om \colon X^\qm_{(0,\infty)} \to \R$ such that 
for every divisorial valuation $q\val_E \in X^\an_{(0,\infty)}$, 
and every snc $\R$-model $\cX_\p$ on which the center of $\val_E$ has of codimension 1
(i.e., is an irreducible component of the central fiber $(\cX_\p)_k$), the function 
takes value
\[
\wt_\om(q\val_E) = q \val_E\big(\div_{\cX}(\om) + ((\cX_\p)_k)_\red\big)
\]
Note that the given rational section of $\om_X$ defines a canonical divisor $K_X$, 
which is a Cartier divisor on $X$.

Let $(\cX_\p,D)$ be a snc model for $X$. 
For every nonempty index set $I = \{i_1,\dots,1_s\}$ over $k$ we have an isomorphism
$\ff_{\p,I} \colon \Sk_{\p,I} \xrightarrow\sim \R^s_{>0}$ 
given by $v \mapsto (v(D_{i_1}),\dots,v(D_{i_s}))$.
We consider the measure on $\Sk_{\p,I}$ given by
\[
\n_{\p,I,\om} := e^{-\wt_\om} \. \ff_{\p,I}^*(\n), 
\]
where $\n$ is the Lebesgue measure on $\R^s_{>0}$. 
If $P \subset \Sk_{\p/k}$ is a potential face, and we have $P = \Sk_{\p',I'}$, 
then we set $\n_P := \n_{\p',I',\om}$. 
We denote by $\S_\om(\Sk_{\p/k})$ the collection of subsets $S \subset \Sk_{\p/k}$ 
such that $S \cap P$ is $\n_P$-measurable for every potential face $P$,
and let $\S_{\p,\om}(X^\an_{(0,\infty)})$ be the collection of subsets of $X^\an_{(0,\infty)}$ of the form
$r_\p^{-1}(S)$ where $S \in \S_\om(\Sk_{\p/k})$. For any such set, 
we define 
\[
\m_{\p,\om}\big(\p^\an_{(0,\infty)}(r_\p^{-1}(S))\big) := \sum_{I \ne \emptyset} [D_I^\o] \,\n_{\p,I,\om}(S \cap \Sk_{\p,I})
\]
where the class $[D_I^\o]$ is considered in the motivic ring $\MM(k)$.
Essentially the same proof of \cref{t:existence-measure} gives the following result.
%\footnote{To check}

\begin{theorem}
Denoting $\S_\om(X^\an_{(0,\infty)}) := \bigcup_\p \S_{\p,\om}(X^\an_{(0,\infty)})$
where the union is taken over all snc models $(\cX,D,\p)$,
the measures $\m_{\p,\om}$ glue together to give a measure
\[
\m_\om \colon \S_\om(X^\an_{(0,\infty)}) \to \MM(k).
\]
\end{theorem}

\begin{definition}
We let
\[
\ov\S_\om(X^\an) := \{ T \subset X^\an \mid \theta^{-1}(T) \in \S_\om(X^\an_{(0,\infty)})\}
\]
and define $\ov\m_\om \colon \ov\S_\om(X^\an) \to \MM(k)$ by setting
$\ov\m_\om(T) := \m_\om(\theta^{-1}(T))$.
We call $\ov\m_\om$ the \emph{motivic  measure} on $X^\an$. 
\end{definition}

\begin{remark}
The measure $\ov\m_\om(T)$ can be thought of as an `average measure' of the sets $\theta^{-1}(T) \cap X^\an_b$
as $b$ varies in $(0,\infty)$. 
\end{remark}

\subsection{Integration}

The notion of integrability defined in \cref{s:functions}
can be adapted to the current setting, as follows.

Let $\Func(\Sk_{\p/k}\cap X^\an,\R)$ is the ring of real valued functions
on $\Sk_\p \cap X^\an$. For every element $\f$ in this ring, we consider the function
$\f\o\theta_\p \colon \Sk_{\p/k} \to \R$
where $\theta_\p$ is the restriction of $\theta$ to $\Sk_{\p/k}$;
note that $\theta_\p(\Sk_{\p/k}) = \Sk_{\p/k} \cap X^\an$. 
Let then $\Func_0(\Sk_{\p/k}\cap X^\an,\R)$
be the ideal of functions $\f \in \Func(\Sk_{\p/k}\cap X^\an,\R)$
such that $\f\o\theta_\p$ almost-everywhere zero on every potential face $P$
with respect to the corresponding measure $\n_P$. 
For any snc $R$-model $(\cX_\p,D)$, we define
\[
\FF_\p := \Func(\Sk_{\p/k} \cap X^\an,\R)/\Func_0(\Sk_{\p/k} \cap X^\an,\R),
\]
and denote by $\FF_\p^\o \subset \FF_\p$ the subring
generated by the characteristic functions $1_{\Sk_{\p,I}\cap X^\an}$ with $I/k$.

\begin{definition}
An element $\f \in \FF_\p$ called a \emph{function of level $\p$}. 
We say that $\f$ is \emph{integrable} if the restriction of $\f \o \theta_\p$ to any potential face $P$
of $\Sk_{\p,I}$ is integrable with respect to the corresponding measure $\n_P$. 
We let $\IF_{\p,\om} \subset \FF_\p$
be the subspace of integrable functions of level $\p$.
Using the natural injective maps $\IF_{\p,\om} \inj \IF_{\p',\om}$ defined for $\p' \ge \p$, 
we define the \emph{space of integrable functions} on $X^\an$ to be
$\IF_\om(X^\an) := \dirlim_\p \IF_{\p,\om}$.
\end{definition}

Setting $\FF^\o(X^\an) := \dirlim_\p \FF_\p^\o$
there is a natural $\FF^\o(X^\an)$-module structure on $\MM(k)$ that is defined 
similarly as in \cref{s:functions}.

\begin{definition}
We define the \emph{module of integrable motivic functions} on $X^\an$ to be
\[
\IMF_\om(X^\an) := \MM(k) \otimes_{\FF^\o(X^\an)}\IF_\om(X^\an).
\]
\end{definition}

Every element $f \in \IMF_\om(X^\an)$ can be written as a finite sum 
$\sum_j[V_j] \otimes \f_j$ where $[V_j]$ are the classes in $\MM(k)$ 
determined by $k$-varieties $V_j$ and
$\f_j \in \IF_{\p,\om}$ for a suitable choice of snc $R$-model $(\cX_\p,D)$. 

\begin{definition}
Using the above notation, 
the \emph{integral} of $f\in \IMF_\om(X^\an)$ is the element in $\MM(k)$ given by
\[
\int_{X^\an} f\,d\ov\m_\om := \sum_j \sum_{I/k} [V_j \times D_I^\o] \int_{\Sk_{\p,I}} (\f_j \o \theta_\p) \, d\n_{\p,I,\om}.
\]
\end{definition}

The same proof of \cref{t:integral-well-defined} shows that the integral is well-defined. 

\begin{remark}
The integral defined above should be interpreted as an `average integral' 
on the analytifications $X^\an_b$ where $b$ ranges in $(0,\infty)$.
\end{remark}

Fixing a divisor or an ideal sheaf on $X$ does not determine
a function on $X^\an$. This is because units in $\O_X$ may take nontrivial values on this space. 
Working locally on $X$, one needs to fix generators (or equivalently, to work with functions).
Globally, one can work with sections (and rational sections) of line bundles on $X$.  
Another way of constructing functions on $X^\an$ is to fix an $R$-model 
and look at divisors and ideals sheaves on it.

\begin{example}
Let $(\cX_\p,D)$ be a snc $R$-model, and $\cB = \sum b_i D_i$ an $\R$-divisor on $\cX_\p$
supported within the support of $D$ and with coefficients $b_i > - w_i$ where $w_i := \wt_\om(\val_{D_i})$. 
Then $e^{-\ord(\cB)}$ is an integrable function on $X^\an$, and
\[
\int_{X^\an} e^{-\ord(\cB)}\,d\ov\m_\om 
= \sum_{I/k} [D_I^\o]\int_{\R_{> 0}^{|I|}} e^{-\sum_{i \in I}(b_i+w_i)x_i} dx_1\cdots dx_{|I|}
= \sum_{I/k} \frac{[D_I^\o]}{\prod_{i \in I} (b_i+w_i)}.
\]
\end{example}

\begin{example}
\label{eg:subscheme-spreading}
Given our setting, where $X = Y \times_C \Spec K$, we can consider the $R$-model 
$\cX := Y \times_C \D$.
Let $\cZ \subset \cX$ be a subscheme that spreads over $C$, that is, 
of the form $\cZ = Z \times_C \D$ where $Z \subset Y$ is a closed subscheme. 
Let $(Y_\s,E) \to Y$ be a log resolution of $(Y,Z)$, and let
$(\cX_\p,D)$ be the snc $R$-model for $X$ obtained by base change from $(Y_\s,E)$. 
Write $\I_\cZ \.\O_{\cX} = \O_\cX(-b_iD_i)$.
We then reduce to the previous example to define integrals $\int_{X^\an}|\I_\cZ|^s\,d\ov\m_\om$
for $s \in \R$ (under some condition on $s$). 
\end{example}

\begin{remark}
Similarly to what we did in \cref{s:atomic}, one can develop a parallel theory
where the Lebesgue measure on the faces of the fan is replaced by a motivic atomic
measure concentrated on the $\Z$-valued valuations. 
The resulting integral takes value in $\MA_k$. 
We leave the details to the interested reader.
It is useful to remark that the integral valuations supporting the measure lie on
different fibers of $X^\an_{(0,1)} \to (0,1)$. In particular, it is not clear to us
whether a similar theory can be developed by just working on the Berkovich space $X^\an$
over $(K,\|\bla\|)$ without allowing any rescaling of the valuation on $K$. 
\end{remark}

\subsection{Comparison theorem}

We continue with the setting fixed throughout the section. 
Our goal is to compare motivic integration on $X^\an$ to motivic integration on the 
analytification $X^\an_0$ of $X$ over $K$ regarded as a field with trivial norm.

We enlarge the space $X^\an_{(0,\infty)}$ to include $X^\an_0$.
This gives rise to an hybrid space $X^\an_{[0,\infty)}$
with a continuous map
$\~\lambda \colon X^\an_{[0,\infty)} \to [0,\infty)$.
The point $0 \in [0,\infty)$ corresponds to the trivial norm on $K$, 
and $X^\an_0$ is the fiber over this point. 
We stress that this fiber is not homeomorphic to the other fibers $X^\an_b$. 
The construction of $X^\an_{[0,\infty)}$ is similar to the hybrid space constructed in \cite{Ber09}
when $X$ is a variety defined over the complex numbers. 

Recall that $X = Y \times_C \Spec K$ where $Y$ is a variety 
and $C$ is a smooth curve over $k$, and 
that we are assuming that the canonical sheaves $\om_X$ and $\om_Y$ are invertible. 
As in \cref{eg:subscheme-spreading}, we consider the $R$-model
$\cX := Y \times_C \D$.
Note that for this model the map $\cX_K \to X$ is an isomorphism.
Let $Y^\an$ be the analytification of $Y$ over $k$ with the trivial norm, 
and $Y^{\val} \subset Y^\an$ the space of real valuations on $Y$. 
Similarly, let $X^{\val}_0 \subset X^\an_0$ denote the space of real valuations on $X$; 
we keep the subsript $0$ to remember that we here are regarding $K$ with the trivial norm.
We can assume that the rational canonical form $\om$ we fixed on $X$
is the restriction of a rational canonical form $\~\om$ on $Y$.
Let $K_X$ and $K_Y$ be the canonical divisors defined by this form on $X$ and $Y$, respectively. 
Let $B$ be a Cartier divisor on $Y$, and let $\cB := B \times_C \D$
and $B_K := B \times_C \Spec K$. 

The following theorem shows that the motivic integral over the valued field $K$ can be 
computed as a difference of two motivic integrals over constant fields ($K$ with the trivial norm and $k$, 
respectively). 

\begin{theorem}
\label{t:comparison}
With the above notation, for every $s \in \R$ we have
\[
\int_{X^\an} e^{-\ord(s\cB)}\,d\ov\m_\om = 
\int_{Y^{\val}} e^{-\ord(K_Y+sB)} \,d\m_Y - 
\int_{X^{\val}_0} e^{-\ord(K_X+sB_K)} \,d\m_X
\]
in $\MM(\D)$. Here, the integral are viewed as taking values in $\MM(\D)$ via the natural maps
$\MM(k) \to \MM(\D)$, $\MM(Y) \to \MM(C) \to \MM(\D)$, and $\MM(X_K) \to \MM(K) \to \MM(\D)$, 
respectively. 
\end{theorem}

\begin{proof}
Let $(\cX_\p,D)$ and $(Y_\s,E)$ be snc models as in \cref{d:snc-R-model}, 
with the latter gives a log resolution of $(Y,B)$. 
We write $D = \sum_{i=1}^rD_i$ and $E = \sum_{j=1}^m E_j$, and let 
$\b \colon \{1,\dots,r\} \to \{1,\dots,m\}$ be the function defined by the property that
$D_i$ is an irreducible component of $E_{\b(i)} \times_C\D$. 
For every $J \subset \{1,\dots,m \}$, we denote by $\I(J)$
the collection of subsets $I \subset \{1,\dots,r\}$ such that 
$\b$ restricts to a bijection $\b|_I \colon I \to J$. 

Recall that the snc $R$-model $(\cX_\p,D)$ determines the set of 
quasi-monomial valuations 
$\Sk_{\p/k} = \bigsqcup_{I/k} \Sk_{\p,I}$
in $X^\an_{(0,\infty)}$.
This snc $R$-model also gives a snc model over $X$, namely, $((\cX_K)_\p,D_K)$, and associated to this model
we have the skeleton
$\Sk_{\p/K} := \bigsqcup_{I/K} \Sk_{\p,I}$
in $X_0^{\val}$.
In a similar fashion, the snc model $(Y_\s,E)$ over $Y$ determines a skeleton 
$\Sk_\s \subset Y^{\val}$. 
For any $J \subset \{1,\dots,m\}$, we write $J/\D$ if the image of the stratum $E_J$ 
in $C$ intersects the image of $\D$. For any such $J$, we write $J/k$ if $E_J$ maps to the image
of the closed point of $\D$, and $J/K$ is $E_J$ maps to the image of the generic point of $\D$
(i.e., if $E_J$ dominates $C$).
Setting $\Sk_{\s/\D} = \bigsqcup_{J/\D} \Sk_{\s,I}$, $\Sk_{\s/k} = \bigsqcup_{J/k} \Sk_{\s,I}$ and 
$\Sk_{\s'/K} = \bigsqcup_{J/K} \Sk_{\s',I}$, we have
\[
\Sk_\s \supset \Sk_{\s/\D} = \Sk_{\s/k} \sqcup \Sk_{\s/K}.
\]
We can disregard everything in $\Sk_\s \setminus \Sk_{\s/\D}$
since for any index set $J$ that is not over $\D$, the class
$[E_J]_Y$ is in the kernel of the map $\MM(Y) \to \MM(\D)$. 
In the following, all motivic classes will be regarded in $\MM(\D)$. 

Setting $a_j := \^A_Y(\val_{E_j})$, $k_j := \ord(K_Y)(\val_{E_j})$, and $b_j := \ord(B)(\val_{E_j})$, 
the equality
\[
\int_{Y^{\val}} e^{-\ord(K_Y+sB)} \,d\m_Y = 
\sum_{J/\D}[E_J^\o\times_C\D] \int_{\R_{>0}^{|J|}} e^{-\sum_{j\in J}(a_j+k_j+b_j)x_j}d\n
\]
holds in $\MM(\D)$. 
The formula stated in the \lcnamecref{t:comparison} will follow by breaking 
the sum into two sums according to whether $J$ is over $k$ or over $K$.  

If $J/k$, then $\I(J)$ consists of only one index set $I$, and we have $[E_J^\o\times_C\D] = [D_I^\o]$. 
Note that all index sets $I/k$ with $D_I \ne \emptyset$ are realized in this way. 
Furthermore, the bijection $\b|_I \colon I \to J$ induces a linear isomorphism 
$\Sk_{\p,I} \simeq \Sk_{\s,J}$ sending $\val_{D_i}$ to $\val_{E_\b(i)}$.
For every $i \in I$, we have
$\wt_\om(\val_{D_i}) = \^A_Y(\val_{E_{\b(i)}}) - \ord(K_Y)(\val_{E_{\b(i)}})$
and $\ord(\cB)(\val_{D_i}) = \ord(B)(\val_{E_{\b(i)}})$. 
Therefore we have
\[
\int_{X^\an} e^{-\ord(s\cB)}\,d\ov\m_\om =
\sum_{J/k}[E_J^\o\times_C\D] \int_{\R_{>0}^{|J|}} e^{-\sum_{j\in J}(a_j+k_j+b_j)x_j}d\n
\]
in $\MM(\D)$.

If $J/K$, then $\I(J)$ may consist of several index sets, each one over $K$, 
and we have $[E_J^\o\times_C\D] = \sum_{I \in \I(J)}[D_I^\o]$. 
Every index set $I/K$ with $D_I \ne \emptyset$ appear in $\I(J)$ for some $J/K$.
For every $I \in \I(J)$, the bijection $\b|_I \colon I \to J$ induces a linear isomorphism 
$\Sk_{\p,I} \simeq \Sk_{\s,J}$ sending $\val_{D_i}$ to $\val_{E_\b(i)}$, and for every $i \in I$ we have
$\^A_X(\val_{D_i}) = \^A_Y(\val_{E_{\b(i)}})$, 
$\ord(K_X)(\val_{D_i}) = \ord(K_Y)(\val_{E_{\b(i)}})$,
and $\ord(B_K)(\val_{D_i}) = \ord(B)(\val_{E_{\b(i)}})$.
Therefore we see that
\[
\int_{X_0^{\val}} e^{-\ord(K_X+B_K)}\,d\ov\m_\om =
\sum_{J/K}[E_J^\o\times_C\D] \int_{\R_{>0}^{|J|}} e^{-\sum_{j\in J}(a_j+k_j+b_j)x_j}d\n
\]
in $\MM(\D)$.

Combining everything, we get the formula stated in the \lcnamecref{t:comparison}.
\end{proof}

\begin{bibdiv}
\begin{biblist}

%\bib{ADK13}{article}{
%   author={Abramovich, Dan},
%   author={Denef, Jan},
%   author={Karu, Kalle},
%   title={Weak toroidalization over non-closed fields},
%   journal={Manuscripta Math.},
%   volume={142},
%   date={2013},
%   number={1-2},
%   pages={257--271},
%   %issn={0025-2611},
%   %review={\MR{3081008}},
%   %doi={10.1007/s00229-013-0610-5},
%}

\bib{AKMW02}{article}{
   author={Abramovich, Dan},
   author={Karu, Kalle},
   author={Matsuki, Kenji},
   author={W\l odarczyk, Jaros\l aw},
   title={Torification and factorization of birational maps},
   journal={J. Amer. Math. Soc.},
   volume={15},
   date={2002},
   number={3},
   pages={531--572},
%   issn={0894-0347},
%   review={\MR{1896232}},
}

%\bib{AMR99}{article}{
%   author={Abramovich, Dan},
%   author={Matsuki, Kenji},
%   author={Rashid, Suliman},
%   title={A note on the factorization theorem of toric birational maps after
%   Morelli and its toroidal extension},
%   journal={Tohoku Math. J. (2)},
%   volume={51},
%   date={1999},
%   number={4},
%   pages={489--537},
%%   issn={0040-8735},
%%   review={\MR{1725624}},
%}

\bib{Bat99}{article}{
   author={Batyrev, Victor V.},
   title={Non-Archimedean integrals and stringy Euler numbers of
   log-terminal pairs},
   journal={J. Eur. Math. Soc. (JEMS)},
   volume={1},
   date={1999},
   number={1},
   pages={5--33},
%   issn={1435-9855},
%   review={\MR{1677693}},
%   doi={10.1007/PL00011158},
}

\bib{Ber90}{book}{
   author={Berkovich, Vladimir G.},
   title={Spectral theory and analytic geometry over non-Archimedean fields},
   series={Mathematical Surveys and Monographs},
   volume={33},
   publisher={American Mathematical Society, Providence, RI},
   date={1990},
   pages={x+169},
%   isbn={0-8218-1534-2},
%   review={\MR{1070709}},
}

\bib{Ber99}{article}{
   author={Berkovich, Vladimir G.},
   title={Smooth $p$-adic analytic spaces are locally contractible},
   journal={Invent. Math.},
   volume={137},
   date={1999},
   number={1},
   pages={1--84},
%   issn={0020-9910},
%   review={\MR{1702143}},
%   doi={10.1007/s002220050323},
}

\bib{Ber09}{article}{
   author={Berkovich, Vladimir G.},
   title={A non-Archimedean interpretation of the weight zero subspaces of
   limit mixed Hodge structures},
   conference={
      title={Algebra, arithmetic, and geometry: in honor of Yu. I. Manin.
      Vol. I},
   },
   book={
      series={Progr. Math.},
      volume={269},
      publisher={Birkh\"{a}user Boston, Boston, MA},
   },
   date={2009},
   pages={49--67},
%   review={\MR{2641170}},
%   doi={10.1007/978-0-8176-4745-2_2},
}

\bib{BdFFU15}{article}{
   author={Boucksom, Sebastien},
   author={de Fernex, Tommaso},
   author={Favre, Charles},
   author={Urbinati, Stefano},
   title={Valuation spaces and multiplier ideals on singular varieties},
   conference={
      title={Recent Advances in Algebraic Geometry, a conference in honor of Rob Lazarsfeld's 60th birthday},
   },
   book={
      publisher={London Math. Soc. Lecture Note Series},
   },
   date={2015},
   pages={29--51},
}

\bib{Bou89}{book}{
   author={Bourbaki, Nicolas},
   title={Algebra. I. Chapters 1--3},
   series={Elements of Mathematics (Berlin)},
   note={Translated from the French;
   Reprint of the 1974 edition},
   publisher={Springer-Verlag, Berlin},
   date={1989},
%   pages={xxiv+709},
%   isbn={3-540-19373-1},
%   review={\MR{979982}},
}

\bib{Clu03}{article}{
   author={Cluckers, Raf},
   title={Presburger sets and $p$-minimal fields},
   journal={J. Symbolic Logic},
   volume={68},
   date={2003},
   number={1},
   pages={153--162},
%   issn={0022-4812},
%   review={\MR{1959315}},
%   doi={10.2178/jsl/1045861509},
}
		
\bib{CL08}{article}{
   author={Cluckers, Raf},
   author={Loeser, Fran\c{c}ois},
   title={Constructible motivic functions and motivic integration},
   journal={Invent. Math.},
   volume={173},
   date={2008},
   number={1},
   pages={23--121},
%   issn={0020-9910},
%   review={\MR{2403394}},
%   doi={10.1007/s00222-008-0114-1},
}

\bib{CL10}{article}{
   author={Cluckers, Raf},
   author={Loeser, Fran\c{c}ois},
   title={Constructible exponential functions, motivic Fourier transform and
   transfer principle},
   journal={Ann. of Math. (2)},
   volume={171},
   date={2010},
   number={2},
   pages={1011--1065},
%   issn={0003-486X},
%   review={\MR{2630060}},
%   doi={10.4007/annals.2010.171.1011},
}

%\bib{dFL}{article}{
%   author={de Fernex, Tommaso},
%   author={Lau, Chung Ching},
%   title={Atomic motivic measures on Berkovich spaces}
%%   title={Motivic integration on Berkovich spaces, II: atomic approach},
%   note={In preparation},
%%   date={2020},
%}

\bib{DL99}{article}{
   author={Denef, Jan},
   author={Loeser, Fran\c{c}ois},
   title={Germs of arcs on singular algebraic varieties and motivic
   integration},
   journal={Invent. Math.},
   volume={135},
   date={1999},
   number={1},
   pages={201--232},
%   issn={0020-9910},
%   review={\MR{1664700}},
%   doi={10.1007/s002220050284},
}

\bib{DL02}{article}{
   author={Denef, Jan},
   author={Loeser, Fran\c{c}ois},
   title={Motivic integration, quotient singularities and the McKay
   correspondence},
   journal={Compositio Math.},
   volume={131},
   date={2002},
   number={3},
   pages={267--290},
%   issn={0010-437X},
%   review={\MR{1905024}},
%   doi={10.1023/A:1015565912485},
}

\bib{HK06}{article}{
   author={Hrushovski, Ehud},
   author={Kazhdan, David},
   title={Integration in valued fields},
   conference={
      title={Algebraic geometry and number theory},
   },
   book={
      series={Progr. Math.},
      volume={253},
      publisher={Birkh\"{a}user Boston, Boston, MA},
   },
   date={2006},
   pages={261--405},
%   review={\MR{2263194}},
%   doi={10.1007/978-0-8176-4532-8_4},
}

\bib{JM12}{article}{
   author={Jonsson, Mattias},
   author={Musta\c t\u a, Mircea},
   title={Valuations and asymptotic invariants for sequences of ideals},
   language={English, with English and French summaries},
   journal={Ann. Inst. Fourier (Grenoble)},
   volume={62},
   date={2012},
   number={6},
   pages={2145--2209 (2013)},
%   issn={0373-0956},
%   review={\MR{3060755}},
}

\bib{Kol07}{book}{
   author={Koll\'ar, J\'anos},
   title={Lectures on resolution of singularities},
   series={Annals of Mathematics Studies},
   volume={166},
   publisher={Princeton University Press, Princeton, NJ},
   date={2007},
%   pages={vi+208},
%   isbn={978-0-691-12923-5},
%   isbn={0-691-12923-1},
%   review={\MR{2289519}},
}

\bib{Kon95}{book}{
   author={Kontsevich, Maxim},
   title={String cohomology},
   note={Lecture at Orsay},
   date={1995},
}

\bib{LS03}{article}{
   author={Loeser, Fran\c{c}ois},
   author={Sebag, Julien},
   title={Motivic integration on smooth rigid varieties and invariants of
   degenerations},
   journal={Duke Math. J.},
   volume={119},
   date={2003},
   number={2},
   pages={315--344},
%   issn={0012-7094},
%   review={\MR{1997948}},
%   doi={10.1215/S0012-7094-03-11924-9},
}

\bib{Loo02}{article}{
   author={Looijenga, Eduard},
   title={Motivic measures},
   note={S\'{e}minaire Bourbaki, Vol. 1999/2000},
   journal={Ast\'{e}risque},
   number={276},
   date={2002},
   pages={267--297},
%   issn={0303-1179},
%   review={\MR{1886763}},
}

\bib{Mar02}{book}{
   author={Marker, David},
   title={Model theory},
   series={Graduate Texts in Mathematics},
   volume={217},
   note={An introduction},
   publisher={Springer-Verlag, New York},
   date={2002},
%   pages={viii+342},
%   isbn={0-387-98760-6},
%   review={\MR{1924282}},
}

%\bib{Mat86}{book}{
%    Author = {Matsumura, Hideyuki},
%    Title = {{Commutative ring theory. Transl. from the Japanese by M. Reid.}},
%    FJournal = {{Cambridge Studies in Advanced Mathematics}},
%    Journal = {{Camb. Stud. Adv. Math.}},
%    Volume = {8},
%    Year = {1986},
%    Publisher = {Cambridge University Press, Cambridge},
%    Language = {English},
%    MSC2010 = {13-02 13-01 13Axx 14A05},
%    Zbl = {0603.13001}
%}

\bib{MN15}{article}{
   author={Musta\c{t}\u{a}, Mircea},
   author={Nicaise, Johannes},
   title={Weight functions on non-Archimedean analytic spaces and the
   Kontsevich-Soibelman skeleton},
   journal={Algebr. Geom.},
   volume={2},
   date={2015},
   number={3},
   pages={365--404},
%   issn={2313-1691},
%   review={\MR{3370127}},
%   doi={10.14231/AG-2015-016},
}

%\bib{Pas89}{article}{
%   author={Pas, Johan},
%   title={Uniform $p$-adic cell decomposition and local zeta functions},
%   journal={J. Reine Angew. Math.},
%   volume={399},
%   date={1989},
%   pages={137--172},
%%   issn={0075-4102},
%%   review={\MR{1004136}},
%%   doi={10.1515/crll.1989.399.137},
%}

\bib{Rei02}{article}{
   author={Reid, Miles},
   title={La correspondance de McKay},
   note={S\'{e}minaire Bourbaki, Vol. 1999/2000},
   journal={Ast\'{e}risque},
   number={276},
   date={2002},
   pages={53--72},
%   issn={0303-1179},
%   review={\MR{1886756}},
}

\bib{Thu07}{article}{
   author={Thuillier, Amaury},
   title={G\'eom\'etrie toro\"\i dale et g\'eom\'etrie analytique non archim\'edienne.
   Application au type d'homotopie de certains sch\'emas formels},
   language={French, with English summary},
   journal={Manuscripta Math.},
   volume={123},
   date={2007},
   number={4},
   pages={381--451},
%   issn={0025-2611},
%   review={\MR{2320738}},
%   doi={10.1007/s00229-007-0094-2},
}

\bib{Wlo03}{article}{
   author={W\l odarczyk, Jaros\l aw},
   title={Toroidal varieties and the weak factorization theorem},
   journal={Invent. Math.},
   volume={154},
   date={2003},
   number={2},
   pages={223--331},
%   issn={0020-9910},
%   review={\MR{2013783}},
}

\bib{Wlo05}{article}{
   author={W\l odarczyk, Jaros\l aw},
   title={Simple Hironaka resolution in characteristic zero},
   journal={J. Amer. Math. Soc.},
   volume={18},
   date={2005},
   number={4},
   pages={779--822},
%   issn={0894-0347},
%   review={\MR{2163383}},
%   doi={10.1090/S0894-0347-05-00493-5},
}

\end{biblist}
\end{bibdiv}

\end{document}